\documentclass[11pt]{article}
\usepackage{amsfonts,latexsym,amssymb,amsthm,amsmath,graphicx,cases,mathrsfs}
\usepackage[dvipsnames]{xcolor}
\usepackage[ulem=normalem,draft]{changes}
\usepackage{paralist}
\usepackage{graphics} 
\usepackage{epsfig}
\usepackage{epstopdf}
\usepackage{ulem}
\usepackage[titletoc,title]{appendix}
\usepackage{empheq}
\usepackage{cancel}
\usepackage{tikz}
\usetikzlibrary{arrows,shapes}
\usetikzlibrary{decorations.pathmorphing,decorations.pathreplacing}
\usetikzlibrary{calc,patterns,angles,quotes}

\usepackage{todonotes}

\usepackage[colorlinks=true]{hyperref}
\hypersetup{urlcolor=blue, citecolor=red}
\allowdisplaybreaks
\setlength{\topmargin}{-.3in}
\setlength{\textheight}{8.7in}
\setlength{\oddsidemargin}{-.1in}
\setlength{\textwidth}{6.5in}



\newtheorem{theorem}{Theorem}[section]

\newtheorem{assumption}{Assumption}
\newtheorem{consequence}{Consequence}
\newtheorem{thm}{Theorem}

\newtheorem{lemma}[theorem]{Lemma}
\newtheorem{prop}{Proposition}
\newtheorem{clr}[theorem]{Corollary}

\theoremstyle{definition}

\newtheorem{rmk}{Remark}

\newcommand{\be}{\begin{equation}}
\newcommand{\ee}{\end{equation}}
\newcommand{\bsubeq}{\begin{subequations}}
	\newcommand{\esubeq}{\end{subequations}}

\newcommand{\ds}{\displaystyle}

\newcommand{\calL}{{\mathcal{L}}}
\newcommand{\calH}{{\mathcal{H}}}

\newcommand{\calI}{{\mathcal{I}}}

\newcommand{\calF}{{\mathcal{F}}}

\newcommand{\calD}{{\mathcal{D}}}

\newcommand{\calA}{{\mathcal{A}}}

\newcommand{\calU}{{\mathcal{U}}}

\newcommand{\calY}{{\mathcal{Y}}}

\newcommand{\calT}{{\mathcal{T}}}
\newcommand{\calV}{{\mathcal{V}}}

\newcommand{\calW}{{\mathcal{W}}}

\newcommand{\BR}{\mathbb{R}}

\newcommand{\BN}{\mathbb{N}}

\newcommand{\al}{\alpha}

\newcommand{\wti}{\widetilde}

\newcommand{\mb}[1]{\mathbf{#1}}

\newcommand{\bpm}{\begin{pmatrix}}
	\newcommand{\epm}{\end{pmatrix}}

\newcommand{\bbm}{\begin{bmatrix}}
	\newcommand{\ebm}{\end{bmatrix}}

\numberwithin{equation}{section}
\numberwithin{thm}{section}
\numberwithin{rmk}{section}
\numberwithin{prop}{section}

\newcommand{\bs}[1]{\boldsymbol{#1}}

\newcommand\rfrac[2]{{}^{#1}\!/_{#2}}

\newcommand{\norm}[1]{\left\lVert#1\right\rVert}
\newcommand{\abs}[1]{\left\lvert#1\right\rvert}

\newcommand{\ip}[2]{\langle #1, #2 \rangle}
\newcommand{\ipp}[2]{( #1, #2 )}
\newcommand{\Uad}{U_{ad}}

\newcommand{\nin}{\noindent}
\newcommand{\V}{\mathcal{V}}
\newcommand{\calP}{\mathcal{P}}

\newcommand{\ul}[1]{\underline{#1}}
\newcommand{\ol}[1]{\overline{#1}}


\newcommand{\tcr}[1]{\textcolor{red}{#1}}

\usepackage{todonotes}

\begin{document}
	\title{Continuous Differentiability of the Value Function of Semilinear Parabolic Infinite Time Horizon Optimal Control Problems on $L^2(\Omega)$ under Control Constraints \thanks{The authors were supported by the ERC advanced grant 668998 (OCLOC) under the EU's H2020 research program.}}
	
\author{Karl Kunisch \thanks{Institute for Mathematics and Scientific Computing, University of Graz, Heinrichstrasse 36, A-8010 Graz, Austria, and Radon Institute, Austrian Academy of Science, Linz, Austria. (karl.kunisch@uni-graz.at).}
    \and Buddhika Priyasad \thanks{Institute for Mathematics and Scientific Computing, University of Graz, Heinrichstrasse 36, A-8010 Graz, Austria. (b.sembukutti-liyanage@uni-graz.at).}}

	\date{\today}
	\maketitle
	
	\begin{abstract}
		\nin An abstract framework guaranteeing the local continuous differentiability of the value function associated with  optimal stabilization problems subject to  abstract semilinear parabolic equations  subject to a norm  constraint on the controls is established. It guarantees that the value function satisfies the associated Hamilton-Jacobi-Bellman equation in the classical sense. The applicability of the developed framework is demonstrated for specific semilinear parabolic equations.
		
	\end{abstract}
	
	\section{Introduction.}
	
	Continuous differentiability of the value function with respect to the initial datum is an important problem in optimal feedback control theory. Indeed, if the value function is $C^1$ then  it is the  solution of a Hamilton Jacobi Bellman (HJB) equation and its negative  gradient can be used to define on  optimal state feedback law. The subject matter of this paper addresses local continuous differentiability of the value function $\V$ for infinite horizon optimal control problems subject to semilinear parabolic control problems and norm constraints on the control. Such problems are intimately related to stabilization problems which are often cast as infinite horizon optimal control problems. Investigating infinite horizon problems constitutes one of the specificities of this paper. Another one is the fact that we focus on the differentiability of  $\V$ on (subsets of) $L^2(\Omega)$. Thus we need to consider the semilinear equations with initial data
	$y_0 \in L^2(\Omega)$. As a consequence the solutions of the semilinear equations only enjoy low Sobolev-space regularity. This restricts the class of nonlinearities, compared to those which are admissible if the states are in $L^\infty((0,\infty)\times\Omega)$, which is the situation typically addressed in the literature on optimal control \cite{Cas:1997} and \cite{Tro:2010}. The latter necessitates to take the initial conditions in  spaces strictly smaller than $L^2(\Omega)$. Here we consider $L^2(\Omega)$, first due to intrinsic interest, secondly because ultimately  the HJB equation should be solved numerically, which is easier in an $L^2(\Omega)$ setting than in other topologies, like $H^1(\Omega)$. Let us also recall that one of the approaches to solve the HJB equation is given by the policy iteration. It assumes that  the value function is $C^1$.
	
	The underlying analysis demands stability and  sensitivity analysis of infinite dimensional optimal control problems subject to nonlinear equations. For this purpose we utilize the theory of generalized equations as established by \cite{D:1995} and \cite{SMR:1980}. It involves first order approximations of the state and adjoint  equations, which lead to restrictions on the class of nonlinearities which can be admitted.  We refer to the section on examples in this respect.
	
	The current investigations are to some degree a continuation of work the first author's work on optimal feedback control for infinite dimensional systems. In  \cite{BKP:2018, BKP:TE:2018, BKP:2019} Taylor approximations of the value function for problems with a concrete structure, namely, bilinear control systems, and the Navier Stokes equations were investigated and differentiability of the value function was obtained as a by product. In these investigations norm constraints were not considered. Here we admit norm constraints and we focus  on semilinear equations. Let us also notice that the systems investigated in \cite{BKP:2018, BKP:TE:2018, BKP:2019} share the property  that the second derivatives with respect to the state variable of the nonlinearity  in the state equation do not depend on the state itself anymore.
	
	Let us also compare our work to the developments in the field of parametric sensitivity analysis of semilinear parabolic equations under control constraints.  There are many papers  focusing  on stability and sensitivity analysis of finite time horizon problems along with pointwise control constraints, see  e.g. \cite{BM:1999, GHH:2005, Gri:2004, GV:2006,  Mal:2002, MT:1999, Tro:2000, Wac:2005}, and the literature there. First, of these papers, except for  \cite{GV:2006, Tro:2000}, consider the case with initial data in $H^1(\Omega)$ or $C(\bar \Omega)$. In \cite{GV:2006} again  the third derivative of the nonlinearity is zero. Secondly, all of them consider the finite horizon  case. Since we treat  infinite horizon problems we have to  guarantee stabilizability (for small initial data) under control constraints. Then we use a fixed point argument to obtain well-posedness of the system.	Well-posedness  and stability with respect to parameters of the adjoint equation is significantly more involved for infinite horizon problems than for finite horizon problems. It requires techniques, differently from those used in the finite horizon case. Another aspect is the proper characterization of the adjoint state at $t=\infty$.	
	
	In the finite dimensional case, there is, of course a tremendous amount of work on the treatment of the value function if it is not $C^1$.  Fewer papers concentrate on the case where the value function enjoys smoothness properties. We mention \cite{Goe:2005} and \cite{CF:2013} in this respect.	
	
	In order to achieve the goal we desire, we lay out the following setup. In Section 2, we consider an abstract parametric optimization problem with an equality constraint and  another convex  constraint. Existence of an optimal solution, of a multiplier associate  to the equality constraint, and  Lipschitz stability of the component of the state variable which lies in the complement of the kernel of the linearized constraint will be established.
This result is necessary but not sufficient for the further developments, since stability is obtained in a norm which is too weak and since the stability estimate does not involve the component in the kernel of the linearized constraint and the multiplier, i.e. the adjoint states, yet. At the level of Section 2 this remains as Assumption \eqref{H4}. In Section 3 we specify the concrete optimal stabilization problem and a set of conditions, most importantly  on the nonlinearity of the state equation, under which Assumption \eqref{H4} can be established, for initial data $y_0 \in L^2(\Omega)$. Section 3 also contains a summary of the main results of this paper. They are stated as theorems with a little stronger assumptions than eventually necessary, for the saker of easing the presentation. Section 4 is dedicated to the proof of verifying the  assumptions of the general setup of Section 2 for the concrete optimal control problem stated in Section 3. As  conclusion we obtain the Lipschitz continuity in the appropriate norms of the all variables appearing in the optimality system with respect to the parameter of interest, which is the initial condition $y_0$, in our case. Since our analysis is a local one involving second order optimality conditions, solutions to the optimality system are related to local solutions to the optimal control problem. As a corollary to these results we obtain that the local value function is Fr\'{e}chet differentiable. In Section 5, we show that in the neighborhood of global solutions  the value function $\V$ satisfies the  Hamilton-Jacobi-Bellman (HJB) equation in the strong sense. Finally, Section 6 is devoted to demonstrating that the developed framework is applicable for some concrete examples, namely for linear systems, Fisher's equations, and  parabolic equations with global Lipschitz nonlinearities. All our results require a smallness assumption on the initial conditions $y_0$.
	Two aspects need to be taken into consideration in this respect. First $y_0$ has to be sufficiently small so that the controlled system is stable. Secondly a second order optimality condition is needed. For this to hold a sufficient condition is provided by smallness of the adjoint state, which in turn can be implied by smallness of $y_0$. We stress that these two issues are of related, but independent nature.

	\section{Lipschitz stability for an abstract optimization problem.}
	\label{Sec-abs_lip}	
	
	Here we present a stability result for an abstract, infinite dimensional optimization problem which will be the building block for the results below. This result is geared towards exploiting  the specific nature of optimization problem with differential equations as constraints. First  existence of a dual variable will result from a regular point condition. Subsequently the Lipschitz stability result is obtained in two steps. In the first one, we rely on the relationship between the linearized optimality conditions and an associated linear-quadratic optimal optimization problem, with an extra convex constraint. This approach is useful since it provides the existence of solutions to the linearized system on the basis of variational techniques. However it  dictates a certain norms for the involved quantities. These  norms are too weak for our goal of obtaining Lipschitz continuity of the adjoint variables in such a manner that differentiability of the cost with respect to the initial conditions can be argued. Therefore, in a second step we exploit the specific structure of the optimality systems, using the fact that it is related to a parabolic optimal control problem, to obtain the Lipschitz continuity in the stronger norms. This two step approach is also present in some of the earlier work on stability and sensitivity analysis which was quoted in the introduction. But due to that fact these papers considered finite horizon problems it came as a byproduct which improved the regularity of the adjoints. In our work it is essential to reach our  goal. This is why we decided to formalize this two step approach which was not done in earlier work.\\
	
	\nin Concretely, we consider the optimization problem	
	\begin{equation}\label{Pq}
	\tag{$P_q$}
	\begin{cases}
	\min \ f(x),\\
	e(x,q) = 0, \ x \in C.
	\end{cases}
	\end{equation}
	\nin with a parameter dependent equality constraint, and a general constraint described by $x \in C$, where $C$ is a closed convex subset of a real Hilbert space $X$. Further, $W$ is  a real Hilbert space and $P$ is a normed linear space. In the application that we have in mind, the parameter $q$ will appear as the initial condition in the dynamical system. The following Assumption \eqref{H0} is assumed to hold throughout. 	
	\begin{assumption}\label{H0}\ \\
		$q_0 \in P$ is a nominal reference parameter,\\
		$x_0$ is a local solution (\hyperref[Pq]{$P_{q_0}$}),\\
		$f: X \longrightarrow \BR^+$ is twice continuously differentiable in a neighborhood of $x_0$,\\
		\nin $e: X \times P \longrightarrow W$ is continuous, and twice continuously differentiable w.r.t. $x$, with first and second derivative  Lipschitz continuous in a neighborhood of $(x_0,q_0)$.
	\end{assumption}	
	\nin The derivatives {with respect to $x$} will be denoted  by primes and the derivatives w.r.t. $y$ and $u$ later on, are denoted by subscripts.
They are all considered in the sense of Lebesgue derivatives. \\
	
	\nin We introduce the Lagrangian $\calL: X \times P \times W^* \longrightarrow \BR$ associated to \eqref{Pq} by
	\begin{equation}\label{La_fu}
	\calL(x, q, \lambda) = f(x) + \ip{\lambda}{e(x,q)}_{W^*, W}.
	\end{equation}
	\nin Next further relevant assumptions are introduced:
	\begin{assumption}[regular point condition]\label{H1}
		\begin{equation*}
		0 \in \text{int } e'(x_0, q_0)(C - x_0),
		\end{equation*}
	\end{assumption}
	\nin where $int$ denotes the interior in the $W$ topology. This regularity condition implies the existence of a Lagrange multiplier $\lambda_0 \in W^*$, see e.g. \cite{MZ:1979} such that the following first order condition holds:
	\begin{equation}
	\begin{cases}
	\ip{\calL'(x_0, q_0, \lambda_0)}{c - x_0}_{X^*,X} \geq 0, \ \forall c \in C,\\
	e(x_0, q_0) = 0.
	\end{cases}
	\end{equation}
	\nin It is equivalent to
	\begin{equation}\label{eq:rp_h1}
	\begin{aligned}
	\begin{cases}
	0 \in \calL'(x_0, q_0, \lambda_0) + \partial \mb{I}_C(x_0), \quad &\text{in } X^*,\\
	e(x_0, q_0) = 0, \quad &\text{in } W,
	\end{cases}
	\end{aligned}	
	\end{equation}
	
\nin where $\partial \mb{I}_C(x)$ denotes the subdifferential of the indicator function of the set $C$ at $x \in X$. 	
	
	\nin Let $\ds A \in \calL(X, X^*)$ denote the operator representation of
$\ds \calL''(x_0, q_0, \lambda_0)$, i.e.
	\begin{equation}\label{def_A}
	\ip{Ax_1}{x_2}_{X^*,X} = \calL''(x_0, q_0, \lambda_0)(x_1 ,x_2)
	\end{equation}
	\nin and define
	\begin{equation}\label{def_E}
	E = e'(x_0, q_0) \in \calL(X,W).
	\end{equation}	
	\nin We further require
	\begin{assumption}[positive definiteness]\label{H2}
		\begin{equation*}
		\exists \kappa >  0: \ \ip{Ax}{x}_{X^*, X} \geq \kappa \norm{x}^2_X, \ \forall x \in \text{ker } E.
		\end{equation*}
	\end{assumption}	

Condition \eqref{H2} is a bit stronger than a second order sufficient optimality condition, since it does not take into consideration the activity or inactivity of the constraints. Such weaker second order conditions typically allow to derive quadratic positive definite  lower bounds on the cost and H\"older continuity with respect to perturbations. For Lipschitz continuity and differentiability stronger assumptions, such as \eqref{H2} are typically assumed. We refer exemplarily to  \cite{Gri:2004,GHH:2005, GV:2006, Wac:2005}, and \cite[Section 2.3]{IK:2008}. The constraints in these references, however, are not identical with those of the present paper.

The stability result of $(x_0, \lambda_0)$ with respect to perturbation of $q$ at $q_0$  will be based on Robinson's strong regularity condition which involves the following linearized form of the optimality condition,	
	\begin{equation}\label{str_reg_con}
	\begin{aligned}
	\begin{cases}
	0 \in \calL'(x_0, q_0, \lambda_0) + A(x - x_0) + E^*(\lambda - \lambda_0) + \partial \mb{I}_C(x)  &\text{in } X^*,\\
	0 = e(x_0, q_0) + E(x - x_0) &\text{in } W.
	\end{cases}
	\end{aligned}	
	\end{equation}
	\nin We define a multivalued operator $\ds \calT: X \times W^*  \longrightarrow X^* \times W$ by
	\begin{equation}\label{mult_map}
	\calT \bpm x \\ \lambda \epm = \bpm A & E^* \\ E & 0 \epm \bpm x \\ \lambda \epm + \bpm f'(x_0) - A x_0 \\ -E x_0 \epm + \bpm \partial \mb{I}_C(x) \\ 0 \epm,
	\end{equation}
	\nin and observe that \eqref{str_reg_con} is equivalent to \begin{equation*}
	\ds 0 \in \calT \bpm x \\ \lambda \epm.
	\end{equation*}	
	\nin Here it is understood that $\calT$ is evaluated at $(x_0,q_0,\lambda_0)\in X\times P\times W^*$. But $\calT$ is not yet the mapping for which we need to verify the Robinson-Dontchev strong regularity condition in our context. It relates to the fact that we must to treat the  multiplier $\lambda$ in smaller space than $W^*$. Before we can properly specify this condition some additional preparation is necessary. We first  introduce Banach spaces:
	\begin{equation}\label{eq:uline}
		\ul{X} \subset X, \ \ul{W^*} \subset W^*, \ \ul{X^*} \subset X^*,
	\end{equation}
	\nin with continuous injections. We emphasize  that $\ul{X^*}$ should not be confused  with $(\ul{X})^*$. A restriction of $\cal T$ will be defined as multivalued operator $\ds \ul{\calT}: \ul{X} \times \ul{W^*}  \to \ul{X^*} \times W$. Indeed, in applications to optimal control problems extra regularity of multipliers  can be obtained by investigating the solutions  \eqref{eq:rp_h1}, see e.g. Section \ref{sec3}. In the context of optimal stabilization problems this structural property will become transparent in Proposition \ref{prop:adj} and Proposition \ref{prop:est_p}, see also \cite[Proposition 15]{BKP:2019}. It will turn out to be essential for our purposes. But this situation where the multiplier has extra regularity   is also of abstract interest. When studying stability in this setting this means that the second coordinate of the domain of $\calT$ needs to be changed from $W^*$ to  $\ul{W^*}$. This entails that the range space of  $\calT$ has to be modified appropriately, in order to  obtain stability of the $\lambda$ coordinate. For this purpose we introduce $\ul{X^*} \subset X^*$. The reason for further restricting $X$ to $\ul{X}$ will become evident in the proof of Proposition \ref{prop:est_p}. It is related to the fact that we consider infinite horizon problems. A concrete use of these space is elaborated in detailed in subsection \ref{Sec-abs_su}.\\

	\nin Now we adapt the conditions on $f$ and $e$ to the choice of the spaces in \eqref{eq:uline}.

\begin{assumption}\label{H00} \ \\
		There exists a neighborhood $\ds \wti{U}_1 \times \wti{U}_2 \subset \ul{X} \times P$ of $(x_0, q_0)$ such that
		\begin{enumerate}[(i)]
			\item the restriction of $x \mapsto f'(x)$ to $\ul{X}$ defines a mapping $\ul{f'}(x)$ from $\wti{U}_1 \subset \ul{X}$ to $\ul{X^*}$,
			\item the restriction $e'(x,q)^* \in \calL(W^*,X^*)$ to $\ul{W^*}$ defines operators $\ul{e'(x,q)^*} \in \calL(\ul{W^*},\ul{X^*})$ for every $(x,q) \in \wti{U}_1 \times \wti{U}_2$.
		\end{enumerate}
	\end{assumption}
	\nin With these assumption holding  we define the restricted linearized Lagrangian
	\begin{equation}
		\ul{\calL'}: \wti{U}_1 \times \wti{U}_2 \times \ul{W^*} \subset \ul{X} \times P \times \ul{W^*} \longrightarrow \ul{X^*} \quad \text{by} \quad \ul{\calL'}(x,q,\lambda) = \ul{f'}(x) + \ul{e'(x,q)^*}\lambda.
	\end{equation}
	\nin Next we adapt $\partial \mb{I}_{C} \subset X^*$  to the situation of \eqref{eq:uline} and define for $x \in \ul{X}$ the set valued mapping
	\begin{equation}
		{\ul{\partial \mb{I}_{C}}(x)} = \left\{ y \in \ul{X^*}: \ip{y}{v - x}_{X^*,X} \leq 0, \ \forall v \in C \cap \ul{X} \right\} \subset  \ul{X^*}.
	\end{equation}
 	\nin We henceforth assume that $(x_0,\lambda_0) \in \ul{X} \times \ul{W^*}$, it will also follow as a special case of \eqref{H4} below.  The following assumption will guarantee that the restriction $\ul{\calT}$ of $\calT$ is well-defined as operator from  $\ul{X} \times \ul{W^*}$  to $\ul{X^*} \times W$, and the one beyond is needed for Lipschitz continuous dependence of local solutions to \eqref{Pq} with respect to $q$.
 		
	\begin{assumption}\label{H000}\ \\
		$\ds \ul{\calL'}: \wti{U}_1 \times \wti{U}_2 \times \ul{W^*} \subset \ul{X} \times P \times \ul{W^*} \longrightarrow \ul{X^*}$ is Fr\'{e}chet differentiable with respect to $x$, and $(\ul{\calL'})'$, as a mapping $(x,q,\lambda) \mapsto (\ul{\calL'})'(x,q,\lambda)$, is continuous at $(x_0,q_0,\lambda_0) \in \ul{X} \times P \times \ul{W^*}$.
	\end{assumption}
	
	\begin{assumption}\label{H3}
		There exists $\nu > 0$ such that:
		\begin{subequations}
			\begin{align}
			\norm{e(x,q_1) - e(x,q_2)}_W &\leq \nu \norm{q_1 - q_2}_{P}, \ \forall (x,q_1) \text{ and } (x,q_2) \in \wti{U}_1 \times \wti{U}_2, \label{lip_e}\\
			\norm{ {\ul{e'(x,q_1)^*} - \ul{e'(x,q_2)^*}} }_{\calL(\ul{W^*},\ul{X^*})} &\leq \nu \norm{q_1 - q_2}_{P}, \ \forall (x,q_1) \text{ and } (x,q_2) \in \wti{U}_1 \times \wti{U}_2. \label{lip_de}
			\end{align}
		\end{subequations}
	\end{assumption}
	\nin Let us further set
	\begin{equation*}
	\ul{E^*} = \ul{e'(x_0,q_0)^*} \in \calL(\ul{W^*},\ul{X^*}) \text{ and } \ul{A} = (\ul{\calL'}(x_0,q_0,\lambda_0))' \in \calL(\ul{X},\ul{X^*}).
	\end{equation*}
\nin With Assumptions \eqref{H0}-\eqref{H000} holding   \eqref{eq:rp_h1} can be expressed as
	\begin{equation}
	\begin{aligned}
	\begin{cases}
	0 \in \ul{\calL'}(x_0, q_0, \lambda_0) + \ul{\partial \mb{I}_C}(x_0), \quad &\text{in } \ul{X^*},\\
	e(x_0, q_0) = 0, \quad &\text{in } W.
	\end{cases}
	\end{aligned}	
	\end{equation}
Moreover  \eqref{str_reg_con} restricted to $\ul{X}\times \ul{X^*}$ result in:
\begin{equation}\label{str_reg_conbar}
	\begin{aligned}
	0 \in \begin{cases}
	\ul{\calL'}(x_0, q_0, \lambda_0) + \ul{A}(x - x_0) + \ul{E^*}(\lambda - \lambda_0) + \ul{\partial \mb{I}_C}(x) & \text{ in } \ul{X^*},\\
	e(x_0, q_0) + E(x - x_0) & \text{ in } W,
	\end{cases}
	\end{aligned}
	\end{equation}
	\nin and  the multivalued operator $\ds \ul{\calT}: \ul{X} \times \ul{W^*}  \longrightarrow \ul{X^*} \times W$  related  to \eqref{mult_map} is defined as
	\begin{equation}\label{mult_map_b}
	\ul{\calT} \bpm x \\ \lambda \epm = \bpm \ul{A} & \ul{E^*} \\ E & 0 \epm \bpm x \\ \lambda \epm + \bpm \ul{f'}(x_0) - \ul{A} x_0 \\ -E x_0 \epm
+ \bpm \ul{\partial \mb{I}_C}(x) \\ 0 \epm.
	\end{equation}
	\nin Observe that \eqref{str_reg_conbar} is equivalent to \begin{equation*}
	\ds 0 \in \ul{\calT} \bpm x \\ \lambda \epm.
	\end{equation*}

	\nin Existence and Lipschitz continuity of solutions in a neighborhood $(x_0, q_0, \lambda_0)$ will follow from the strong regularity assumption which requires us to show that there exist neighborhoods $\hat{V} \subset \ul{X^*} \times W$ of $0$ and $\hat{U} = \hat{U}_1 \times \hat{U}_2 \subset \ul{X} \times \ul{W^*}$ of $(x_0, q_0)$ such that  $\ul{\calT}^{-1}$ has the properties that $ \ul{\calT}^{-1}(\hat{V}) \cap \hat{U}$ is single-valued and that it is Lipschitz continuous from $\hat{V}$ to $\hat{U}$, see \cite{D:1995}, (and also  \cite{SMR:1980}, \cite[Definition 2.2, p 31]{IK:2008}, in case $\ul{X} =X, \ \ul{W^*} = W^*, \ \ul{X^*} =X^*$). We approach the strong regularity assumption in two steps. In the first one we argue invertibility of $\calT$ and Lipschitz continuity of the  variable $x$ in $X$. For this purpose we exploit  the symmetry of $\calT$ and consider an associated variational problem.  In our specific situation the inverse of $\calT$ - and consequently  of $\ul \calT$ - is single-valued and thus   the restriction to the neighborhood $\hat U$ is  not needed. Existence and Lipschitz continuity of $\lambda$ as well as Lipschitz continuity of $x$ in the small space $\ul{X}\times\ul{W^*}$ remains an assumption in the generality of problem \eqref{Pq}. It  will be  verified in a second step for the optimal stabilization problems in the following sections.

\begin{assumption}\label{H4}
		For  $(\beta_1, \beta_2) \in \hat{V} \subset \ul{X^*} \times W$ , the solution $\ds \left(x_{(\beta_1,\beta_2)}, \lambda_{(\beta_1,\beta_2)} \right)$ to $\ds  \calT \bpm x \\ \lambda \epm = \bpm \beta_1 \\ \beta_2 \epm$ lies in $\ul{X} \times \ul{W^*}$. Moreover there exists a constant $k > 0$ such that
		\begin{equation*}
		\norm{x_{(\beta_1,\beta_2)} - x_{(\hat{\beta}_1,\hat{\beta}_2)}}_{\ul{X}} + \norm{\lambda_{(\beta_1,\beta_2)} - \lambda_{(\hat{\beta}_1,\hat{\beta}_2)}}_{\ul{W^*}} \leq k \left[ \norm{(\beta_1,\beta_2) - (\hat{\beta}_1,\hat{\beta}_2)}_{\ul{X^*} \times W} + \norm{x_{(\beta_1,\beta_2)} - x_{(\hat{\beta}_1,\hat{\beta}_2)}}_X \right]
		\end{equation*}
		\nin for all $(\beta_1,\beta_2) \in \hat{V}, (\hat{\beta}_1,\hat{\beta}_2) \in \hat{V}$.
	\end{assumption}
	\nin This condition is used after the existence of $x_\beta=x_{(\beta_1,\beta_2)}$ was already established. Note that for $(\beta_1, \beta_2)^T = 0$ we have $(x_{(0,0)},\lambda_{(0,0)}) = (x_0, \lambda_0)$ and hence \eqref{H4} in particular implies that $(x_0, \lambda_0) \in \ul{X} \times \ul{W^*}$.
	\nin We arrive at the announced stability result.
	
	\begin{thm}\label{thm:lip_con}
		Assume that \eqref{H0}-\eqref{H4} hold at a local solution $x_0$ of (\hyperref[Pq]{$P_{q_0}$}). Then there exist a neighborhood $U = U(x_0,\lambda_0) \subset \ul{X} \times \ul{W^*}$, a neighborhood $N = N(q_0)\subset P$, and a constant $\mu$ such that for all $q \in N$ there exists a unique $(x(q), \lambda(q)) \in U$ satisfying
		\begin{equation}\label{eq:thm_lc_1} 
		\begin{aligned}
		0 \in \begin{cases}
		\ul{\calL'}(x(q), q, \lambda(q)) + \ul{\partial \mb{I}_C} (x(q)), \quad &\text{ in } \ul{X^*}, \\
		e(x(q), q), \quad &\text{ in } W,
		\end{cases}
		\end{aligned}		
		\end{equation}
		and
		\begin{equation}
		\norm{(x(q_1), \lambda(q_1)) - (x(q_2), \lambda(q_2))}_{\ul{X} \times \ul{W^*}} \leq \mu \norm{q_1 - q_2}_{P}, \ \forall q_1,q_2 \in N.\label{eq:thm_lc_2}
		\end{equation}
		 In addition there exists a nontrivial neighborhood $\wti{N} \subset N$ of $q_0$ such that $x(q)$ is a local solution of \eqref{Pq} for $q \in \wti{N}$.
	\end{thm}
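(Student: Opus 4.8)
The strategy is to establish Robinson--Dontchev strong regularity for the restricted affine generalized equation \eqref{str_reg_conbar}--\eqref{mult_map_b}, then to deduce \eqref{eq:thm_lc_1}--\eqref{eq:thm_lc_2} from the corresponding implicit function theorem for generalized equations, and finally to upgrade the resulting KKT point to a local minimizer via a second order sufficient condition. \emph{Step 1 (invertibility of $\calT$ with Lipschitz stability of $x$ in $X$).} By the definition \eqref{mult_map} of $\calT$, for $(\beta_1,\beta_2)$ in a suitable neighborhood $\hat V$ of the origin the inclusion $\calT(x,\lambda)^T=(\beta_1,\beta_2)^T$ is the first order optimality system of the auxiliary linear--quadratic problem
\[
\min_{x\in C}\ \tfrac12\ip{A(x-x_0)}{x-x_0}_{X^*,X}+\ip{f'(x_0)-\beta_1}{x-x_0}_{X^*,X}\quad\text{subject to}\quad E(x-x_0)=\beta_2 .
\]
On the affine feasible set the increments lie in $\ker E$, so by the symmetry of $A$ and the positive definiteness \eqref{H2} the objective is strongly convex there; since \eqref{H1} in particular makes $E$ surjective, the problem has a unique solution $x_{(\beta_1,\beta_2)}$ and an associated multiplier $\lambda_{(\beta_1,\beta_2)}\in W^*$. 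Subtracting the variational inequalities for two data pairs, testing with the difference of the states after lifting the mismatch $\beta_2-\hat\beta_2$ to an element of $X$ via surjectivity of $E$, and using the coercivity on $\ker E$, yields $\norm{x_{(\beta_1,\beta_2)}-x_{(\hat\beta_1,\hat\beta_2)}}_X\le c\,\norm{(\beta_1,\beta_2)-(\hat\beta_1,\hat\beta_2)}_{X^*\times W}$.

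\emph{Step 2 (passage to the restricted spaces).} For $(\beta_1,\beta_2)\in\hat V\subset\ul{X^*}\times W$, Assumption \eqref{H4} guarantees that the pair from Step~1 lies in $\ul X\times\ul{W^*}$ and bounds its increment in $\ul X\times\ul{W^*}$ by $\norm{(\beta_1,\beta_2)-(\hat\beta_1,\hat\beta_2)}_{\ul{X^*}\times W}$ plus $\norm{x_{(\beta_1,\beta_2)}-x_{(\hat\beta_1,\hat\beta_2)}}_X$; since $\ul{X^*}\hookrightarrow X^*$ continuously, Step~1 absorbs the last term. Hence $\ul\calT^{-1}\colon\hat V\to\ul X\times\ul{W^*}$ is single-valued and globally Lipschitz, i.e.\ \eqref{str_reg_conbar} is strongly regular with constant, say, $\ell$. \emph{Step 3 (implicit function theorem).} Rewrite \eqref{eq:thm_lc_1} as the fixed point relation $(x,\lambda)^T\in\ul\calT^{-1}\big(\Phi(x,\lambda,q)\big)$, where
\[
\Phi(x,\lambda,q)=\bpm \ul A(x-x_0)+\ul{E^*}(\lambda-\lambda_0)+\ul{\calL'}(x_0,q_0,\lambda_0)-\ul{\calL'}(x,q,\lambda)\\ E(x-x_0)-e(x,q)\epm
\]
takes values in $\ul{X^*}\times W$ by \eqref{H00} and vanishes at $(x_0,\lambda_0,q_0)$. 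Because $\ul A=(\ul{\calL'})'(x_0,q_0,\lambda_0)$ and $\ul{E^*}=\ul{e'(x_0,q_0)^*}$, the derivative of $\Phi$ with respect to $(x,\lambda)$ vanishes at $(x_0,\lambda_0,q_0)$; combined with the continuity of $(\ul{\calL'})'$ from \eqref{H000} and the Lipschitz first derivative of $e$ from \eqref{H0}, $\Phi(\cdot,\cdot,q)$ has Lipschitz modulus $<1/\ell$ on a small ball about $(x_0,\lambda_0)$, uniformly for $q$ near $q_0$, while $q\mapsto\Phi(x,\lambda,q)$ is Lipschitz by \eqref{lip_e}--\eqref{lip_de}. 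The parametrized contraction argument underlying Robinson's theorem (\cite{D:1995}; see also \cite{SMR:1980}, \cite[Def.~2.2]{IK:2008}) then produces neighborhoods $U\ni(x_0,\lambda_0)$, $N\ni q_0$ and a constant $\mu$ such that for each $q\in N$ there is a unique $(x(q),\lambda(q))\in U$ solving \eqref{eq:thm_lc_1} and satisfying \eqref{eq:thm_lc_2}.

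\emph{Step 4 (local optimality).} The pair $(x(q),\lambda(q))$ solves the Karush--Kuhn--Tucker system of \eqref{Pq}. Because $f$ is $C^2$ near $x_0$ and $e$ is $C^2$ in $x$ with Lipschitz derivatives, $\calL''(x(q),q,\lambda(q))$ and $e'(x(q),q)$ depend continuously on $q$ near $q_0$ in view of \eqref{eq:thm_lc_2} and \eqref{H3}; hence the positive definiteness \eqref{H2} persists, $\calL''(x(q),q,\lambda(q))(h,h)\ge\tfrac{\kappa}{2}\norm{h}_X^2$ for all $h\in\ker e'(x(q),q)$, for $q$ in a neighborhood $\wti N\subset N$ of $q_0$. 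The second order sufficient optimality condition then forces $x(q)$ to be a (strict) local solution of \eqref{Pq} for $q\in\wti N$.

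I expect the main obstacle to be Step~3: verifying that the nonlinearity is a genuinely small Lipschitz perturbation of its linearization \emph{measured in the restricted norms} $\ul X,\ul{W^*},\ul{X^*}$ — this is exactly what dictates the introduction of the spaces \eqref{eq:uline} together with the tailored hypotheses \eqref{H00}, \eqref{H000}, \eqref{H3}, and it is where the structural Assumption \eqref{H4} becomes indispensable. A secondary, more technical point is the Lipschitz dependence of the state on the constraint perturbation $\beta_2$ in Step~1, where $A$ is coercive only on $\ker E$ and a lifting argument is required.
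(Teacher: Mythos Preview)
Your overall architecture matches the paper's: verify strong regularity of $\ul\calT$ via the auxiliary linear--quadratic problem, invoke Dontchev's implicit function theorem, and then propagate the second order condition. Steps~3 and~4 are essentially what the paper does (it cites \cite[Theorem~2.4]{D:1995} directly rather than spelling out the contraction, and works a bit harder with orthogonal projections onto $\ker e'(x(q),q)$ in Step~4, but your sketch is adequate).

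The gap is in Step~1. Subtracting the two variational inequalities and testing with $\delta x$ yields
\[
\ip{A\delta x}{\delta x}_{X^*,X}\;\le\;\ip{\delta\beta_1}{\delta x}_{X^*,X}-\ip{\delta\lambda}{E\delta x}_{W^*,W}
\;=\;\ip{\delta\beta_1}{\delta x}_{X^*,X}-\ip{\delta\lambda}{\delta\beta_2}_{W^*,W},
\]
and after decomposing $\delta x=\delta v+\delta w$ with $\delta v\in\ker E$, the coercivity \eqref{H2} controls only $\|\delta v\|_X^2$. The cross term $\ip{\delta\lambda}{\delta\beta_2}$ has no sign, and you have no a~priori bound on $\|\delta\lambda\|_{W^*}$; your lifting of $\delta\beta_2$ handles $\delta w$ but does not eliminate this term. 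Consequently Step~1 as written delivers only H\"older-$\tfrac12$ stability of $x$ in $X$ (this is exactly what the paper obtains at that stage from \cite[Thm.~I.2.11--I.2.15]{IK:2008}), not Lipschitz. The paper's remedy is to \emph{interleave} what you call Steps~1 and~2: invoke \eqref{H4} already inside the coercivity estimate to bound $\|\delta\lambda\|_{\ul{W^*}}\le k(\|\delta x\|_X+\|\delta\beta\|_{\ul{X^*}\times W})$, substitute, and absorb the resulting $\|\delta v\|_X$ on the right into the left. This produces $\|\delta x\|_X\le k_3\|\delta\beta\|_{\ul{X^*}\times W}$ (note the $\ul{X^*}$ norm on $\beta_1$, not $X^*$), after which a second application of \eqref{H4} gives the full Lipschitz estimate in $\ul X\times\ul{W^*}$. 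So your two steps cannot be decoupled; \eqref{H4} is needed already for primal Lipschitz stability, not merely to upgrade to the restricted spaces.
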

	
	\nin For the proof we shall employ the following lemma in which $A \in \calL(X,X^*)$ and $E \in \calL(X,W)$ denote generic operators. For the sake of completeness we also include its proof.
	
	\begin{lemma}\label{lem:aux_opt}
		Let $(\tilde{a}, \tilde{b}) \in X^* \times W$, assume that $A\in \calL(X,X^*)$ is self-adjoint and  satisfies  \eqref{H2}, and that the set $\ds
S(\tilde{b}) = \{ x \in C: \ E x = \tilde{b} \}$ is nonempty. Then the problem
		\begin{equation}\label{eq:aux_opt_1}
		\begin{cases}
		\min_{x \in C} \tilde{J}(x) = \min_{x \in C} \frac{1}{2}\ip{Ax}{x}_{X^*,X} + \ip{\tilde{a}}{x}_{X^*,X},\\
		E x = \tilde{b},
		\end{cases}
		\end{equation}
		\nin admits a unique solution $x = x(\tilde{a}, \tilde{b})$ satisfying
		\begin{equation}
		\ip{A x + \tilde{a}}{v - x}_{X^*,X} \geq 0, \ \text{for all } v \in S(\tilde{b}).
		\end{equation}
		\nin If moreover the regular point condition $0 \in \text{int } E(C - x(\tilde{a}, \tilde{b}))$ holds, then there exists $\lambda = \lambda(\tilde{a}, \tilde{b}) \in W^*$ such that
		\begin{equation}\label{eq:aux_opt_rp}
		0 \in \begin{cases}
		\bpm A & E^* \\ E & 0\epm \bpm x \\ \lambda \epm + \bpm \tilde{a} \\ -\tilde{b} \epm + \bpm \partial \mb{I}_C(x) \\ 0 \epm.
		\end{cases}
		\end{equation}
	\end{lemma}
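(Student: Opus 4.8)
The plan is to treat \eqref{eq:aux_opt_1} as a strictly convex minimization problem on a closed convex set and then pass to the optimality system via a Lagrange multiplier argument. First I would establish existence and uniqueness of the minimizer. The feasible set $S(\tilde b) = C \cap E^{-1}(\tilde b)$ is nonempty by hypothesis, closed (intersection of the closed convex $C$ with the closed affine set $E^{-1}(\tilde b)$, the latter closed since $E$ is bounded), and convex. On the subspace $\ker E$, the quadratic form $x \mapsto \ip{Ax}{x}$ is coercive by \eqref{H2}; since any two feasible points differ by an element of $\ker E$, the functional $\tilde J$ restricted to $S(\tilde b)$ is of the form (strictly convex coercive quadratic in the $\ker E$-direction) plus (affine), hence strictly convex and coercive along $S(\tilde b)$. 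Weak lower semicontinuity of $\tilde J$ (continuity plus convexity) together with coercivity on the closed convex set $S(\tilde b)$ in the Hilbert space $X$ gives existence of a minimizer; strict convexity gives uniqueness. Denote it $x = x(\tilde a,\tilde b)$.

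Next I would derive the variational inequality. Since $\tilde J$ is Gateaux differentiable with $\tilde J'(x) = Ax + \tilde a$ (using that $A$ is self-adjoint, so the derivative of $\tfrac12\ip{Ax}{x}$ is $Ax$), and $S(\tilde b)$ is convex, the standard first-order necessary condition for a minimizer over a convex set yields
\begin{equation*}
\ip{Ax + \tilde a}{v - x}_{X^*,X} \geq 0, \qquad \text{for all } v \in S(\tilde b).
\end{equation*}
This is exactly the asserted inequality.

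For the final assertion I would invoke the regular point condition $0 \in \operatorname{int} E(C - x(\tilde a,\tilde b))$ together with a Lagrange-multiplier / KKT theorem for convex problems with a surjective-type constraint qualification (e.g.\ the Zowe--Kurcyusz condition, as used via \cite{MZ:1979} earlier in the paper for \eqref{H1}). The variational inequality above says $x$ minimizes $\tilde J$ over $\{v \in C : Ev = \tilde b\}$; equivalently $0 \in \tilde J'(x) + E^*\lambda + \partial \mathbf{I}_C(x)$ for some $\lambda \in W^*$, together with the feasibility $Ex = \tilde b$, i.e.\ $Ex - \tilde b = 0$. Writing this pair out in matrix form is precisely \eqref{eq:aux_opt_rp}. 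The one point needing care is that the multiplier rule must be applied to the constraint $e(v) := Ev - \tilde b$ on the convex set $C$; the regular point condition $0 \in \operatorname{int}\, e'(x)(C - x) = \operatorname{int}\, E(C-x)$ is exactly the hypothesis made, so the multiplier exists in $W^*$.

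I expect the main (though still mild) obstacle to be the coercivity/well-posedness step: \eqref{H2} only gives positivity of $A$ on $\ker E$, not on all of $X$, so one must argue coercivity of $\tilde J$ along the affine slice $S(\tilde b)$ by fixing one feasible point $x_*$ and writing a general feasible point as $x_* + h$ with $h \in \ker E$, then expanding $\tilde J(x_* + h)$ and using $\ip{Ah}{h}\ge \kappa\|h\|_X^2$ to dominate the cross term and the linear term. Everything else (lower semicontinuity, existence via the direct method, the variational inequality, and the passage to the KKT system through the stated regular point condition) is standard convex analysis in Hilbert space.
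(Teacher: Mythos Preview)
Your proposal is correct and follows essentially the same route as the paper: both argue existence via the direct method using that \eqref{H2} yields coercivity of $\tilde J$ along the affine slice $S(\tilde b)$ (the paper makes this explicit by decomposing each feasible $x$ as $w+y$ with $w\in(\ker E)^\perp$ fixed and $y\in\ker E$, which is exactly your ``fix $x_*$ and write $x=x_*+h$'' step), obtain the variational inequality from convex first-order optimality, deduce uniqueness from \eqref{H2}, and invoke the regular point condition for the multiplier. The only cosmetic difference is the reference for the multiplier rule (the paper cites \cite[Theorem 1.6]{IK:2008} rather than \cite{MZ:1979}).
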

	\begin{proof}
		\nin Since $C$ is a closed and convex, $S(\tilde{b})$ is closed and convex. By assumption $S(\tilde{b})$ is nonempty. Hence there exists an $x \in C$ such that $Ex = \tilde{b}$.  Note that each such $x$ can be uniquely decomposed as  $x = w + y$, with  $y \in \text{ker} E,\, w \in \text{ker}E^\perp $ and $E w = \tilde{b}$.
		By \eqref{H2} the functional $\tilde{J}$ is bounded from below and coercive on $S(\tilde{b})$. Hence there exists a minimizing sequence $\{ x_n \}$ in $S(\tilde{b})$ such that $\ds \lim_{n \rightarrow \infty} \tilde{J}(x_n) = \inf_{x \in S(\tilde{b})} \tilde{J}(x)$. Each $x_n$ can be decomposed as  $x_n = w + y_n$, with  $y_n \in \text{ker} E$. By  \eqref{H2} the sequences $\ds \{y_n\}_{n=1}^\infty$ and hence  $\ds \{x_n\}_{n=1}^\infty$ are bounded. Thus there exists a subsequence  $\ds \{ x_{n_k} \}$ with weak limit $x=x(\tilde{a}, \tilde{b})$ in $S(\tilde{b})$. Since $\tilde{J}$ weakly lower semi-continuous, we have that $\ds \tilde{J}(x) \leq \liminf_{k \rightarrow \infty} \tilde{J}(x_{n_k})$ and $x$ minimizes $\tilde{J}$ over $S(\tilde{b})$. This further implies that $\ds \ip{Ax + \tilde{a}}{v -  x}_{X^*,X} \geq 0$ for all $v \in S(\tilde{b})$. Uniqueness of $x$ follows from \eqref{H2}.
		
		\nin The regular point condition implies the existence of a multiplier $\lambda = \lambda(\tilde{a}, \tilde{b})\in W^*$ such that \eqref{eq:aux_opt_rp} holds. See e.g. \cite[Theorem 1.6]{IK:2008}
	\end{proof}
	
	\bigskip
	
	\nin \textit{Proof of the Theorem \ref{thm:lip_con}.}
	\begin{enumerate}[(i)]	
		\item
		The proof of the first assertion of the Theorem \ref{thm:lip_con} is based on the implicit function theorem of Dontchev for generalized equations, see \cite[Theorem 2.4, Remark 2.5]{D:1995}. We introduce the mapping $\ul{\calF}: \ul{X} \times P \times \ul{W^*} \longrightarrow \ul{X^*} \times W$ given by
		\begin{equation*}
		\ul{\calF}(x,q,\lambda)= \bpm
		\ul{\calL'}(x,q,\lambda)  \\
		e(x,q) \epm,
		\end{equation*}
		and observe that   Assumption \eqref{H3} implies that for all $(x,q_1,\lambda),$ and $(x,q_2,\lambda) \in \wti{U}_1 \times \wti{U}_2 \times \ul{W^*}$
		\begin{equation}\label{eq:aux10}
			\norm{\ul{\calF}(x,q_1,\lambda) - \ul{\calF}(x,q_2,\lambda)}_{\ul{X^*} \times W} \le \nu \left( 1 + \norm{\lambda}_{\ul{W^*}} \right) \norm{q_1 - q_2}_{P}.
		\end{equation}
	By \eqref{H0} and \eqref{H000}, and using the integral mean value theorem it can be argued that
	\begin{equation*}
	\bpm x \\ \lambda \epm  \to  \bpm \ul{A} & \ul{E^*} \\ E & 0 \epm \bpm x \\ \lambda \epm + \bpm \ul{f'}(x_0) - \ul{A} x_0 \\ -E x_0 \epm
	\end{equation*}
	strongly approximates $\ul{\calF}$ at $(x_0,q_0,\lambda_0)$, in the sense of Dontchev,   \cite{D:1995}.  In the next two steps  the strong regularity condition for $\ul{\calT}$ will be  verified.		
		
		\item (Existence). \label{Thm-2.1.i} Let, at first, $(\beta_1, \beta_2) \in X^* \times W$ and consider $\ds \calT \bpm x \\ \lambda \epm = \bpm \beta_1 \\ \beta_2 \epm$ which is equivalent to,
		\begin{equation}\label{eq:lin_rp}
		0 \in \bpm a \\ -b \epm + \bpm A & E^* \\ E & 0 \epm \bpm x \\ \lambda \epm + \bpm \partial \mb{I}_C(x) \\ 0 \epm,
		\end{equation}
		\nin with $a = f'(x_0) - A x_0 - \beta_1, \ b = E x_0 + \beta_2$, and $A,E$ defined in \eqref{def_A}, \eqref{def_E}.
		\nin  To solve \eqref{eq:lin_rp} we consider,
		\begin{equation}\label{eq:aux_opt_2}
		\begin{cases}
		\min_{x \in C} \ \frac{1}{2}\ip{Ax}{x}_{X^*,X} + \ip{a}{x}_{X^*,X},\\
		E x = b.
		\end{cases}
		\end{equation}
		\nin This corresponds to \eqref{eq:aux_opt_1} with $\tilde{a} = a$, $\tilde{b} = b$ and feasible set $\ds S(\beta_2) = \{ x \in C: E x = b \}$. Clearly $\ds x_0 \in S(0) = \{ x \in C: E x = E x_0 \}$. By \eqref{H1} and \cite[Theorem I.2.8]{IK:2008}, there exists a neighborhood of the origin  $\ds \tilde{V} \subset X^* \times W$ such that $S(\beta_2)$ is not empty for all $(\beta_1, \beta_2) \in \tilde{V}$. Thus by Lemma \ref{lem:aux_opt} there exists a unique solution $x = x(\beta_1, \beta_2)$ to \eqref{eq:aux_opt_2} for each $(\beta_1, \beta_2) \in \tilde{V}$. By \cite[Theorem I.2.11, I.2.12, I.2.15]{IK:2008}, possibly after reducing $\tilde{V}$, these solutions depend H\"{o}lder continuously on $(\beta_1, \beta_2) \in \tilde{V} \subset X^* \times W$, with exponent $\frac{1}{2}$. The regular point condition for the solution $x(\beta_1, \beta_2)$ is
		\begin{equation*}
		0 \in \text{int } E (C - x(\beta_1, \beta_2)) = \text{int } E (C - x_0) - \beta_2,
		\end{equation*}
		\nin which is satisfied due to \eqref{H1}, possibly after again shrinking $\tilde{V}$. Hence there exists a Lagrange multiplier $\lambda = \lambda(\beta_1, \beta_2)$ associated to $E x = b$, and \eqref{eq:lin_rp} admits a unique solution $(x(\beta_1, \beta_2), \lambda(\beta_1, \beta_2))$ since it is the first order optimality condition for \eqref{eq:aux_opt_1}.
		
		\item (Uniqueness and Lipschitz continuity) Let $(\beta_1, \beta_2) \in \tilde{V}$ and $(\hat{\beta_1}, \hat{\beta_2}) \in \tilde{V}$ with corresponding solutions $(x, \lambda) \in X\times W^*$ and $(\hat{x}, \hat{\lambda}) \in X\times W^*$. This implies that
		\begin{equation}\label{eq:e_decom-1}
		\begin{cases}
		\ip{a + Ax + E^*\lambda}{c - x}_{X^*,X} \geq 0, \ \forall c \in C,\\
		Ex = b, \text{ with } a = f'(x_0) - Ax_0 - \beta_1, \ b = Ex_0 + \beta_2,
		\end{cases}
		\end{equation}
		and
		\begin{equation}\label{eq:e_decom-2}
		\begin{cases}
		\ip{\hat{a} + A\hat{x} + E^*\hat{\lambda}}{c - \hat{x}}_{X^*,X} \geq 0,\ \forall c \in C,\\
		E \hat{x} = \hat{b}, \text{with } \hat{a} = f'(x_0) - Ax_0 - \hat{\beta_1}, \ \hat{b} = Ex_0 + \hat{\beta_2}.
		\end{cases}
		\end{equation}
		By the first equations in \eqref{eq:e_decom-1} and \eqref{eq:e_decom-2} we obtain that
		\begin{equation}
		\ip{a + A x + E^* \lambda }{\hat{x} - x}_{X^*,X} \geq 0, \ \ip{\hat{a} + A \hat{x} + E^* \hat{\lambda} }{x - \hat{x}}_{X^*,X} \geq 0, \ x, \hat{x} \in C.
		\end{equation}
		\nin Combining  these inequalities, we have that
		\begin{equation}\label{ineq:dif_aAE}
		\ip{a - \hat{a} + A (x - \hat{x}) + E^* (\lambda - \hat{\lambda}) }{x - \hat{x}}_{X^*,X} \leq 0.
		\end{equation}
		\nin The second equalities in \eqref{eq:e_decom-1} and \eqref{eq:e_decom-2} imply that
		\begin{equation}\label{eq:E_dif}
		E(x - \hat{x}) = b - \hat{b}.
		\end{equation}
		\nin Let us set
		\begin{equation*}
		\delta x = \hat{x} - x, \ \delta \lambda = \hat{\lambda} - \lambda, \ \delta a = \hat{a} - a, \ \delta b = \hat{b} - b.
		\end{equation*}	
		Then $\delta \beta_1 = - (\hat{\beta_1} - \beta_1)$ and $\delta \beta_2 = \hat{\beta_2} - \beta_2$, and \eqref{ineq:dif_aAE}, \eqref{eq:E_dif} result in
		\begin{equation}\label{ineq:del_xAE}
		\ip{\delta x}{A \delta x}_{X,X^*} + \ip{\delta \lambda}{E \delta x}_{W^*,W} - \ip{\delta \beta_1}{\delta x}_{X^*,X} \leq 0,
		\end{equation}
		\nin and
		\begin{equation}\label{eq:del_E}
		E \delta x = \delta b.
		\end{equation}
		By \eqref{H1} the operator $E$ is surjective. Hence by the closed range theorem expressing $\delta x = \delta v + \delta w \in \text{ker } E + \text{range } E^*  $ implies that $\ds E \delta x = E \delta w = \delta \beta_2$. Again by the closed range theorem there exists $k_1 > 0$:
		\begin{equation}\label{eq:w_beta}
		\norm{\delta w}_X \leq k_1 \norm{\delta \beta_2}_{W}.
		\end{equation}
	From the first equation in \eqref{eq:lin_rp} we have
		\begin{equation*}
		Ax + E^*\lambda -Ax_0 +f'(x_0) - \beta_1 \in - \partial {\mb{I}}_C(x).
		\end{equation*}
	Next we restrict the perturbation parameters to satisfy $(\beta_1,\beta_2) \in (\underline{X^*}\times W) \cap \tilde V$. Due to Assumptions \eqref{H00} and \eqref{H4} we have $(x, \lambda) \in \ul{X} \times \ul{W^*}$,
		\begin{equation*}
			\ul{A} x + {\ul{E^*}} \lambda - \ul{A} x_0 + \ul{f'}(x_0) - \beta_1 \in \ul{X^*}
		\end{equation*}		
		and hence
		\begin{equation*}
			\ul{A} x + {\ul{E^*}} \lambda - \ul{A} x_0 + \ul{f'}(x_0) - \beta_1 \in - \ul{\partial \mb{I}_C}(x).
		\end{equation*}
		The analogous equation holds with $(x,\lambda, \beta_1)$ replaced by $(\hat x, \hat \lambda, \hat \beta_1)$.\\
By \eqref{H2}, \eqref{ineq:del_xAE} and Assumption \eqref{H4}  we find
		\begin{align}
		\kappa \norm{\delta v}^2 & \leq \ip{\delta v}{A \delta v}_{X, X^*} = \ip{\delta x}{A \delta x}_{X, X^*} - 2 \ip{\delta v}{A \delta w}_{X, X^*} - \ip{\delta w}{A \delta w}_{X, X^*}\\
		&\leq - \ip{\delta \lambda}{E \delta x}_{W^*, W} + \ip{\delta \beta_1}{\delta x}_{X, X^*} - 2 \ip{\delta v}{A \delta w}_{X, X^*} - \ip{\delta w}{A \delta w}_{X, X^*} \nonumber \\
		&\leq   \tilde k \norm{\delta \lambda}_{\underline{W}^*}\norm{\delta \beta_2}_W + \norm{\delta \beta_1}_{X^*} \norm{\delta x}_{X} + \norm{A} \norm{\delta w}_{X} \left( 2 \norm{\delta v}_X + \norm{\delta w}_X \right) \nonumber\\
		&\leq \tilde k k ( \norm{\delta w}_X + \norm{\delta v}_X + \norm{\delta \beta}_{\underline X^*\times W} ) \norm{\delta \beta_2}_W + ( \norm{\delta \beta_1}_{ X^*} + 2 \norm{A}\norm{\delta w}_X) (\norm{\delta v}_X + \norm{\delta w}_X), \nonumber
		\end{align}
where $\tilde k$ denotes the embedding constant of $\underline{W}^*$ into $W^*$.
		\nin   Using \eqref{eq:w_beta} and rearranging terms there exists a constant $k_2 > 0$ such that
		\begin{equation}
		\norm{\delta v}_X \leq k_2 \left( \norm{\delta \beta_1}_{\underline X^*} + \norm{\delta \beta_2}_W \right).
		\end{equation}

Applying \eqref{eq:w_beta} again this implies the existence of $ k_3$ such  that
		\begin{equation}\label{eq:aux11}
		\norm{\delta x}_X  \leq k_3 \left( \norm{\delta \beta_1}_{\underline X^*} + \norm{\delta \beta_2}_W \right) \text{ for all } (\beta_1, \beta_2) \in (\ul{X^*} \times W)\cap \tilde V.
		\end{equation}
Another application of \eqref{H4} and \eqref{eq:aux11} imply the existence of a constant  $k_4$
and a neighborhood $\hat V$ of the origin in $\ul{X^*} \times W$ such that
the desired Lipschitz stability estimate for $(\ul{\calT})^{-1}$
		\begin{equation}\label{eq:aux9}
			\norm{\delta x}_{\ul{X}} +  \norm{\delta \lambda}_{\ul{W^*}}   \leq k_4 \left( \norm{\delta \beta_1}_{\ul{X^*}} + \norm{\delta \beta_2}_W \right) \text{ for all } (\beta_1, \beta_2) \in  \hat V \subset \ul{X^*} \times W
		\end{equation}
		holds.
		\item As a consequence of the previous two steps $\ul{\calT}$ is strongly regular at $(x_0, q_0, \lambda_0)$. Together with step (i), Dontchev's theorem is applicable \cite[Theorem 2.4, Remark 2.5]{D:1995}, and \eqref{eq:thm_lc_1}, and \eqref{eq:thm_lc_2} follow.
		
		\item (Local solution to \eqref{Pq}) Now we show that there exists a neighborhood $\tilde{N}$ of $q_0$ such that for $q \in \tilde{N}$ the second order sufficient optimality condition is satisfied at $x(q)$, so that $x(q)$ is a local solution of \eqref{Pq} by eg. \cite[Theorem 2.12, p42]{IK:2008}. Due to \eqref{H2} and regularity of $f, e$ we obtain
		\begin{equation}\label{eq:h2_p}
		\calL''(x(q),q,\lambda(q))(h,h) \geq \frac{\kappa}{2} \norm{h}^2, \ \text{for all } h \in \text{ker } E, \ \text{if } q \in N(q_0).
		\end{equation}
		\nin Let us define $\ds E_q = (e_y(x(q),q))$ for $q \in N(q_0)$. By the surjectivity of $E_{q_0}$ and regularity of $e$ there exists a neighborhood $\tilde{N} \subset N(q_0)$ such that $E_q$ is surjective for all $q \in \tilde{N}$.  Here we also use continuity of
$q \mapsto e_y(x(q), q)$ from $P \to W$  at $q_0$, which follows from \eqref{H0} and the continuity of $q \to x(q)$ at $q_0$.  Consequently  exist $\delta_0, \gamma > 0$ such that
		\begin{equation}
		\calL''(x(q), q, \lambda(q))(h + z,h + z) \geq \delta_0 \norm{h + z}^2,\ \text{for all } h \in \text{ker } E, z \in X
		\end{equation}
		satisfying $\norm{z} \leq \gamma \norm{h}$ by \cite[Lemma 2.13, p43]{IK:2008}. Let us define the orthogonal projection onto ker$\ E_q$ given by $\ds P_{\text{ker } E_q} = I - E_q^*(E_qE_q^*)^{-1}E_q$. We choose $\tilde{N}$ so that
		\begin{equation*}
		\norm{E_q^*(E_qE_q^*)^{-1}E_q - E_{q_0}^*(E_{q_0} E_{q_0}^*)^{-1}E_{q_0}} \leq \frac{\gamma}{1 + \gamma}
		\end{equation*}
		for all $q \in \tilde{N}$.
		For $x \in \text{ker } E_q$, we have $x = h + z$ for $h \in \text{ker } E, \ z \in (\text{ker } E)^{\perp}$ and $\norm{x}^2 = \norm{h}^2 + \norm{z}^2$. Thus,
		\begin{equation*}
		\norm{z} \leq \norm{E_q^*(E_qE_q^*)^{-1}E_q x - E_{q_0}^*(E_{q_0} E_{q_0}^*)^{-1}E_{q_0} x} \leq \frac{\gamma}{1 + \gamma} \big( \norm{h} + \norm{z} \big)
		\end{equation*}
		and hence $\norm{z} \leq \gamma \norm{h}$. From \eqref{eq:h2_p} this implies
		\begin{equation*}
		\calL''(x(q), q, \lambda(q)) \geq \delta_0 \norm{x}^2, \ \text{for all } x \in \text{ker } E.
		\end{equation*}	
		\nin This concludes the proof. \qed
	\end{enumerate}
	
	\section{Differentiability of value function for optimal stabilization  subject to semi-linear parabolic equations.}\label{sec3}
	
	Here we describe the optimal control problems which we shall analyze and state the main results.
	
	\subsection{Notation}\label{sec3.1}
	
	\nin Let $\Omega$ be an open connected bounded subset of $\BR^d$ with dimension $d$, and a Lipschitz continuous boundary $\Gamma$. The associated space-time cylinder is denoted by $Q = \Omega \times (0,\infty)$ and the associated lateral boundary by  $\Sigma = \Gamma \times (0,\infty)$. We define the Hilbert spaces
	\begin{equation*}
	Y = L^2(\Omega), \quad V = H^1_0(\Omega), \text{ and }\quad U = L^2(0,\infty;\,\calU),
	\end{equation*}
	\nin where $\calU$ is a Hilbert space which will be identified with its dual.  Observe that the embedding $V \subset Y$ is dense and compact. Further $V \subset Y \subset V^*$, is a Gelfand triple. Here  $V^*$ denotes the topological dual of $V$ with respect to the pivot space $Y$. For any $T \in (0, \infty)$ we define the space
		\begin{equation*}
	W(0,T) = \bigg\{ y \in L^2(0,T;V); \ \frac{dy}{dt} \in L^2(0,T;V^*) \bigg\},
	\end{equation*}
	\nin endowed with the norm
	\begin{equation*}
		\norm{y}_{W(0,T)} = \left( \norm{y}_{L^2(0,T;V)}^2 + \norm{\frac{dy}{dt}}_{L^2(0,T;V^*)}^2 \right)^{\rfrac{1}{2}}.
	\end{equation*}
	\nin For $T = \infty$, we write $W_{\infty}$ and $I = (0,\infty)$. We further set $W^0_{\infty}=\{y\in W_{\infty}: y(0)=0 \}$. We also set
	\begin{equation*}
	W(T,\infty) = \bigg\{ y \in L^2(T,\infty;V); \ \frac{dy}{dt} \in L^2(T,\infty;V^*) \bigg\}.
	\end{equation*}	
	We shall frequently use that $W_\infty$ embeds continuously into $C([0,\infty),Y)$, see e.g. \cite[Theorem 4.2]{LM:1972} and that $\ds \lim_{t \to \infty} y(t) = 0$, for $y \in W_\infty$, see e.g. \cite{CK:2017}. The set of admissible controls $\Uad$ is chosen to be
	\begin{equation}\label{eq:Uad}
	U_{ad} \subset \{u \in U: \|u(t)\|_{\calU}\le \eta, \text{ for } a.e. \ t>0 \},
	\end{equation}
	where $\eta$ is a positive constant. We further set $\calU_{ad} = \{v \in \calU: \| v \|_{\calU}\le \eta\}$ and denote by $\ds \mathbb{P}_{\calU_{ad}}$ the projection of $\calU$ on $\calU_{ad}$. For this choice of admissible controls, the dynamical system can be stabilized for all sufficiently small initial conditions in $Y$, see  Corollary \ref{cor3.1} and Remark \ref{rem4.1}.\\
	
	\nin For $\delta > 0$ and $\bar y \in Y$, we define the open neighborhoods $B_Y(\delta) = \left\{ y \in Y: \norm{y}_Y < \delta \right\},$ and  $\quad B_Y(\bar y, \delta) = \left\{ y \in Y: \norm{y - \bar y}_Y < \delta \right\}$.

	\subsection{Problem formulation and assumptions.}
	
	We focus on the stabilization problem for an abstract semi-linear parabolic equation formulated as infinite horizon optimal control problem under control constraints:	
	\begin{subequations}\label{SLP}
		\begin{align}
		(\calP)  \qquad \V(y_0) = \min_{
			(y, u)\in W_\infty \times \Uad } \ J(y,u) &= \min_{
			 (y, u)\in W_\infty \times \Uad } \ \frac{1}{2} \int_{0}^{\infty} \norm{y(t)}^2_Y dt + \frac{\al}{2} \int_{0}^{\infty} \norm{u(t)}^2_{\calU} dt,
		\end{align}
		\nin subject to the semilinear parabolic equation
		\begin{align}
		y_t &= \calA y + \calF(y) + B u  \quad \text{ in } L^2(I; V^*), \label{1.1a}\\
		y(x,0) &= y_0 \quad \text{ in } Y. \label{1.1b}
		\end{align}	

	\nin Throughout $\calF$ is the substitution operator associated to a mapping  $\mathfrak{f}:\mathbb{R}\to \mathbb{R}$ so that $(\calF y)(t)=\mathfrak{f} (y(t))$. Sufficient conditions  which guarantee the existence of  solutions to  \eqref{1.1a}, \eqref{1.1b}, as well as solutions $(\bar{y}, \bar{u})$ to (\hyperref[SLP]{$\calP$}), for $y_0\in Y$  sufficiently small, will be given below. We shall also make use of the adjoint equation associated to an optimal state $\bar y$,  given by
		\begin{equation}
		-p_t - \calA^* p - \calF'(\bar{y})^* p = -\bar{y} \quad \text{ in  } L^2(I;V^*).
		\end{equation}
	\end{subequations}	
	Its adjoint state $p$ which will be considered in $L^2(I;V)$ or  in $W_\infty$. The following assumption will be essential.

	\subsubsection{Assumptions A.}\label{assump}
	\begin{itemize}
		\item[\textbf{A1}] The operator $\calA$ with domain $\calD(\calA)\subset Y$ and range in $Y$, generates a strongly continuous analytic semigroup $\ds e^{\calA t}$ on $Y$ and can be extended to  $\calA \in  \calL(V, V^*)$.		
		\item[\textbf{A2}] $B \in \calL(\calU,Y)$ and there exists a  stabilizing feedback operator $K \in \calL(Y,\calU)$ such that the semigroup $\ds e^{(\calA-BK)t}$ is exponentially stable on $Y$.
		\item[\textbf{A3}] The nonlinearity $\ds \calF: W_\infty \to L^2(I; V^*)$ is twice continuously Fr\'{e}chet differentiable, with  second Fr\'{e}chet derivative $\calF''$ bounded on bounded subsets of $W_\infty$,  and $\calF(0) = 0$.
		\item[\textbf{A4}] $\calF:W(0,T) \to L^1(0,T;\calH^*)$ is weak-to-weak continuous for every $T>0$, for some Hilbert space $\calH$ which embeds densely in $V$.
		
		Note that $\ds \left( L^1(0,T; \calH^*) \right)^*  = L^{\infty}(0,T; \calH)$, see \cite[Theorem 7.1.23(iv), p 164]{EE:2004}. Moreover, $L^{\infty}(0,T; \calH)$ is dense in $L^2(0,T; V)$, see \cite[Lemma A.1, p 2231]{MS:2017}.
		
		\item[\textbf{A5}] $\calF'(\bar{y})\in \calL{(L^2(I;V), L^2(I;V^*))}$.		
	\end{itemize}
	\begin{rmk}\label{rmk-fprime}
		The requirement that $\mathcal{F}(0) = 0$  in (\hyperref[assump]{A2}) is consistent with
		the fact that we focus on the stabilization problem with $0$ as steady state for \eqref{1.1a}. Without loss of generality we further assume that
		\begin{equation}\label{eq:fprime}
		\mathcal{F}'(0) = 0,
		\end{equation}
		which can always be achieved by making $\mathcal{F}'(0)$ to be perturbation of $\mathcal{A}$.
	\end{rmk}

	\begin{rmk}\label{rem3.2} Let us assume that (\hyperref[assump]{A3}) holds. Then in view of the fact that $\calF$ is a substitution  operator we have $[\calF'(y) v](t)= \mathfrak{f}'(y(t))v(t)$ for $y$ and $v$ in $W_\infty$,  and $\calF'(y)\in \calL(W_\infty,L^2(I;V^*))$. Its adjoint $[\calF'(y)^* v](t)= \mathfrak{f}'(y(t))v(t)$, for $v\in L^2(I;V)$, satisfying $\calF'(y)^*\in \calL(L^2(I;V),W_\infty^*)$. It has a natural restriction to an operator $ \ul{\calF'(y)^*} \in \calL(W_\infty,L^2(I;V^*))$. With (\hyperref[assump]{A3}) holding it is differentiable and $ [\ul{\calF'(y)}^*]'$ is a bilinear form on $W_\infty \times W_\infty$ with values in $L^2(I;V^*)$. -  For examples of functions $\calF$ which satisfy (\hyperref[assump]{A4}) we refer to see Section \ref{sec6}.
\end{rmk}
	
	\subsubsection{Abstract setup.} \label{Sec-abs_su}
	\nin Here we relate problem (\hyperref[SLP]{$\calP$}) to the abstract problem \eqref{Pq}, which is used with the following spaces:
	\begin{equation}\label{eq3.4}
	\begin{array}l
		X = W_{\infty} \times U, \; W = L^2(I;V^*) \times Y, \; P = Y, \;
		C = \Uad, \; X^* = W_{\infty}^* \times U, \; W^* = L^2(I;V) \times Y,\\[1.7ex]
  		\ul{X} = W_{\infty} \times (U \cap C(\bar I;\calU)), \quad \ul{X^*} = L^2(I;V^*) \times (U \cap C(\bar I;\calU)), \quad \ul{W^*} = \wti W_{\infty},
  \end{array}
\end{equation}
	\nin where \tcr{$I=(0,\infty)$,} and $ \widetilde \calW = \{(\varphi,\varphi(0)):\varphi \in W_\infty\}$, endowed with the norm of $W_{\infty}$. At times we identify $\widetilde \calW$ with $W_{\infty}$. We recall that the dual space of $\ds W_\infty = L^2(I;V) \cap W^{1,2}(I;V^*)$ is  $\ds W^*_\infty = L^2(I;V^*) + (W^{1,2}(I;V^*))^*$, endowed with the norm  $\ds \norm{z}_{ W^*_\infty} = \inf_{ z= z_1 + z_2} \norm{z_1}_{L^2(I;V^*)} + \norm{z_2}_{W^{1,2}(I;V^*)^*}$, where $\ds z_1 \in  L^2(I;V^*), z_2 \in (W^{1,2}(I;V^*))^*$.\\
	
	\nin To express \eqref{Pq} for the present case, we set $x = (y,u) \in W_{\infty} \times U$, and the parameter $q$ becomes the initial condition $y_0 \in Y$. Further $f: W_{\infty} \times U \longrightarrow \BR$ is given by
	\begin{equation}\label{eq:fyu}
	f(y,u) = \frac{1}{2}\int_{0}^{\infty} \norm{y(t)}^2_{Y} dt + \frac{\alpha}{2}\int_{0}^{\infty} \norm{u(t)}^2_{\calU} dt,
	\end{equation}
	and $e(x,q) = e(y,u,y_0)$ is
	\begin{equation}\label{def_eyu}
	e(y,u,y_0) = \bpm y_t - \calA y - \calF(y) - Bu \\ y(0) - y_0 \epm: W_{\infty} \times U \times Y \longrightarrow L^2(I; V^*) \times Y.
	\end{equation}
	\nin By (\hyperref[assump]{A3}) the mapping $e$ is Fr\'{e}chet differentiable with
respect to $\ds x = (y,u) \in W_{\infty} \times U$ and thus for $(y,u,y_0) \in W_{\infty} \times U \times Y$ we have
	\begin{equation}\label{def_deyu}
	e'(y,u,y_0)(v,w) = \bpm v_t - \calA v - \calF'(y) v - B w\\ v(0, \cdot) \epm : W_{\infty} \times U \longrightarrow L^2(I; V^*) \times Y.
	\end{equation}	
	\nin The Lagrange functional $\calL: W_\infty \times U \times Y \times L^2(I;V) \times Y \longrightarrow \BR$ corresponding to our optimal control problem is given by
	\begin{equation*}
	\calL(y, u, y_0, p,p_1) = J(y,u) + \int_0^{\infty} \ip{p}{y_t - \calA y - \calF(y) - Bu}_{V, V^*}dt + (p_1,y(0) - y_0)_Y,
	\end{equation*}
	where  $(p,p_1) \in L^2(I;V) \times Y$ corresponds to the abstract Lagrange multiplier $\lambda \in W^*$.\\
	
	\nin In the remainder of this subsection we specify the mappings $\calT$ and $\ul{\calT}$ for problem (\hyperref[SLP]{$\calP$}). This will facilitate the proofs of the main results further below.\\

\nin At first we take a closer look to the adjoint $\ds  \wti E^* := e'(y,u,y_0)^* \in \calL(L^2(I;V) \times Y,W^*_\infty \times U)$ at a generic element $(y,u,y_0) \in W_{\infty} \times U \times Y$. It is characterized by the property that for all $(v,w) \in W_\infty \times U,\ (p,p_1) \in L^2(I;V) \times Y$ we have	
	\begin{equation*}
		\ip{\wti{E}(v,w)}{(p,p_1)}_{L^2(I;V^*) \times Y, L^2(I;V) \times Y} =
\ip{v}{\wti{E}_1^*(p,p_1)}_{W_\infty, W^*_\infty} + \ipp{w}{\wti{E}_2^*(p,p_1)}_{U},
	\end{equation*}
	\nin where
	\begin{equation*}
		\ip{v}{\wti{E}_1^*(p,p_1)}_{W_\infty, W^*_\infty} =
		\ip{v_t - \calA v  -\calF'(y)v}{p}_{L^2(I;V^*),L^2(I;V)} + \ipp{v(0)}{p_1}_Y,
	\end{equation*}
	\nin and
	\begin{equation*}
		\ipp{w}{\wti{E}_2^*(p,p_0)}_{U} =  - \ipp{w}{B^*p}_U.
	\end{equation*}
	If for some $\tilde \beta_1 \in L^2(I;V^*)$ the pair $(p,p_1) \in L^2(I;V) \times Y$ is a solution to $\tilde E_1^*(p,p_1)= \tilde \beta_1 $ then for all $v \in W_{\infty}$:
	\begin{equation}\label{eq:aux12}
	\ip{v_t - \calA v -\calF'(y)v}{p}_{L^2(V^*),L^2(V)} + \ipp{v(0)}{p_1}_Y - \ipp{w}{B^*p}_U
	=  \langle \tilde \beta_1,v\rangle_{L^2(I;V^*),L^2(I;V)}.
	\end{equation}
	\nin Now we assume that $\calF'(y)$ is not only an element of $\ds \calL(W_{\infty},L^2(I;V^*))$ but rather that it can be extended to an operator ${\calF'(y)} \in \calL(L^2(I;V),L^2(I;V^*))$. This is guaranteed by (\hyperref[assump]{A5}) at minimizers $\bar y$. Then \eqref{eq:aux12} implies that $p
\in W_{\infty}$, and hence $p_1 \in C(I;Y)$ and $p_1 = p(0)$, see Proposition \ref{prop:adj}. In particular $(p,p_1) = (p,p(0)) \in \wti W_{\infty}$, and \eqref{eq:aux12}	can equivalently be expressed as
	\begin{equation}\label{vEtd_1}
		\ip{v}{\wti{E}_1^*(p,p_1)}_{L^2(I;V),L^2(I;V^*)} = \ip{v_t - \calA v -\calF'(y)v}{p}_{L^2(I;V^*),L^2(I;V)} = \ip{v}{\wti \beta_1}_{L^2(I;V),L^2(I;V^*)},
	\end{equation}
	\nin for all $v \in L^2(I;V)$, where we assumed that $\wti \beta_1 \in L^2(I;V^*)$. Conversely, of course, if $p \in \wti W_{\infty}$, then $\ds \wti E^*_1 (p,p(0)) = -p_t - \calA^*p - \calF'(y)^*p \in L^2(I;V^*)$.\\
	
	\nin From now on, let $q_0 = \bar{y}_0$ denote a reference (or nominal) parameter with associated solution $x_0 = (\bar{y}, \bar{u})$. In Proposition \ref{prop:adj} we shall argue that the regular point condition Assumption \eqref{H1} is satisfied and that consequently there exists a Lagrange multiplier $(\bar{p}, \bar{p}_1)$ such that the pair $(x_0, \lambda_0) = (\bar{y}, \bar{u}, \bar{p},\bar{p}_1)$ satisfies \eqref{eq:rp_h1}. Moreover, it will turn out that $\bar p \in W_\infty$, $\bar p_1 = p(0)$, and that $\bar u \in U \cap C(I;\calU)$. For convenience let us present \eqref{eq:rp_h1} for the present case
	\begin{equation}
	\begin{aligned}
	0 \in \begin{cases}
	\bar{y} + E^*_1(\bar p, \bar p(0)),\\
	\alpha \bar u - B^* \bar p + \partial \mb{I}_{\Uad}(\bar u),\\
	\bar y_t - \calA \bar y - \calF (\bar y) - B \bar u,\\
	{\bar y(0)} - y_0,
	\end{cases}
	\end{aligned}
	\end{equation}
	\nin where $\ds E = \bpm E_1 \\ E_2 \epm = e'(\bar y, \bar u, \bar y_0)$. We stress that while the Lagrange multiplier $\bar p$ belongs to $W_{\infty}$, the operator $E^*_1$ in \eqref{vEtd_1} is still considered as an element of $\calL(L^2(I;V) \times Y, W^*_{\infty})$.\\

	\nin We are now prepared to specify the multivalued operators
	\begin{align}
		\calT:& W_{\infty} \times U \times L^2(I;V) \times Y \longrightarrow W_{\infty}^* \times U \times L^2(I;V^*) \times Y, \quad \text{and} \\
		\ul{\calT}:& W_{\infty} \times (U \cap C(I;\calU)) \times \wti W_{\infty} \longrightarrow L^2(I;V^*) \times (U \cap C(I;\calU)) \times L^2(I;V^*) \times Y,
	\end{align}
corresponding to \eqref{mult_map} and \eqref{mult_map_b} by
	\begin{equation}\label{def:T}
		\calT \bpm y \\ u \\ p \\ p_1 \epm = \bpm  E^*_1(p,p_1) + y - [\calF'(\bar{y})^*\bar p]'(y - \bar{y})\\
		\alpha u - B^* p\\
		y_t - \calA y - Bu - \calF'(\bar{y})(y - \bar{y}) - \calF(\bar{y})\\
		y(0) - {\bar{y}_0} \epm  + \bpm 0 \\ \partial \mb{I}_{\Uad}(u) \\ 0 \\ 0 \epm,
	\end{equation}
and
	\begin{equation}\label{def:ulT}
		\ul{\calT} \bpm y \\ \ul u \\ \ul p \\ \ul p(0) \epm = \bpm - \ul p_t
- \calA^* \ul p - \ul {\calF'(\bar{y})^*}\, \ul p + y - [\ul{\calF'(\bar{y})^*}\,\ul{\bar{p}}]'(y - \bar{y})\\
		\alpha \ul u - B^* \ul p\\
		y_t - \calA y - B\ul{u} - \calF'(\bar{y})(y - \bar{y}) - \calF(\bar{y})\\
		y(0) - {\bar{y}_0} \epm  + \bpm 0 \\ \ul{\partial \mb{I}_{\Uad}}(\ul{u}) \\ 0 \\ 0 \epm,
	\end{equation}
	\nin where
	\begin{equation}
		\ul{\partial \mb{I}_{\Uad}}(\ul{u}) = \left\{ \wti u \in U \cap C(I;\calU): \ \ipp{\wti u(t)}{v - \ul{u}(t)}_\calU \leq 0, \ \forall t \in I, \ v \in B_{\eta}(0) \right\},
	\end{equation}
	with $\ds B_{\eta}(0) = \left\{ v \in \calU; \norm{v}_{\calU} \leq \eta \right\} $. In \eqref{def:ulT}, we underline the elements which are taken from different domains when compared to \eqref{def:T}. The range of the first two coordinates of $\ul \calT$ is smaller than that of $\calT$. Accordingly we can make use of \eqref{vEtd_1} when moving from the first row of \eqref{def:T} to the first row of \eqref{def:ulT}.\\

	\nin For convenience of the subsequent work, we recall that the strong regularity condition introduced below \eqref{mult_map_b} requires us  to find  neighborhoods  of $0$ and $(\bar{y}, \bar{u}, \bar{p},\bar{p}(0)) $  of the form $\hat{V} \subset L^2(I;V^*) \times (U \cap C(\ol{I};\calU)) \times L^2(I;V^*) \times Y$ and $\hat{U} \subset W_{\infty} \times (U \cap C(\ol{I};\calU)) \times \widetilde{W}_{\infty}$, such that for all $\bs{\beta} = (\beta_1, \beta_2, \beta_3, \beta_4) \in \hat V$ the equation
	\begin{equation}\label{def:T_b}
		\ul{\calT} \left(y, u, \ul p, \ul p(0) \right)^T = \left( \beta_1, \beta_2, \beta_3, \beta_4 \right)^T,
	\end{equation}	
	admits a unique solution $(y,\ul{u},\ul{p},\ul{p}(0)) \in \hat U$ depending Lipschitz-continuously on $\bs{\beta}$.

	\begin{rmk} We observe that as a consequence of (\hyperref[assump]{A3}) and Remark \ref{rem3.2} the operator  $\ul{\calT}$ is continuous.
	\end{rmk}	
	
	\nin Subsequently we shall frequently refrain from the underline-notation since the meaning should be clear from the context.
	
	\subsection{Main Theorems.}
	
	\nin In this subsection, we present the main theorems of this paper. The first theorem asserts  local continuous differentiability of the value function $\V$ w.r.t. $y_0$, with $y_0$ small enough. The second theorem establishes that $\V$ satisfies the HJB equation in the classical sense. The proof of the first theorem is based on Theorem \ref{thm:lip_con}. It will be given in Section \ref{sec4} below. For this purpose it will be shown that assumptions \hyperref[assump]{A} imply \eqref{H0}-\eqref{H4}. Moreover we need to assert the underlying assumption that problem (\hyperref[SLP]{$\calP$}) is well-posed. This will lead to a smallness assumption on the initial states $y_0$. Consequently it would suffice to assume that (\hyperref[assump]{A3}) and (\hyperref[assump]{A4}) only hold locally in the neighborhood of the origin.  Concerning (\hyperref[assump]{A5}) observe that it is not implied by (\hyperref[assump]{A3}). It  is vacuously satisfied for $\bar y=0$, which is the case for $y_0=0$, since then $\mathcal{F}'(0) = 0$, see \eqref{eq:fprime}.\\

 	\nin We invoke Theorem \ref{thm:lip_con} to assert the Lipschitz  continuity  of the state, the adjoint state, and the control with respect to the initial condition $y_0 \in Y$ in the neighborhood of  a locally optimal solution $(\bar y, \bar u)$ corresponding to a sufficiently small reference initial state $\bar y_0$. This will imply the differentiability of the value function associated to local minima. We shall refer to the value function associated to local minima as 'local value function'.

	\begin{thm}\label{thm-CD}
		Let the assumptions (\hyperref[assump]{A}) hold. Then associated to each local solution $(\bar y(y_0), \bar u(y_0))$ of (\hyperref[SLP]{$\calP$}) there exists a neighborhood of $U(y_0)$ such that the local value function   $\calV: U(y_0)\subset Y \to \BR$ is continuously differentiable, provided that $y_0$ is sufficiently close to the origin in $Y$.
\end{thm}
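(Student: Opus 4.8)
The plan is to verify that the abstract hypotheses \eqref{H0}–\eqref{H4} of Theorem \ref{thm:lip_con} hold for the concrete control problem (\hyperref[SLP]{$\calP$}) with the identifications in \eqref{eq3.4}, and then to extract differentiability of the (local) value function as a corollary of the Lipschitz estimate \eqref{eq:thm_lc_2}. First I would fix a locally optimal pair $(\bar y,\bar u)$ corresponding to a small reference initial datum $\bar y_0=q_0$, and establish well-posedness of (\hyperref[SLP]{$\calP$}) for $y_0$ near the origin: using \textbf{A1}, \textbf{A2} one runs a fixed-point argument in $W_\infty$ around the closed-loop operator $\calA-BK$, exploiting its exponential stability, together with \textbf{A3} (smallness of $\calF$ near $0$, $\calF(0)=0$, $\calF'(0)=0$) to absorb the nonlinear term for small data; this yields a unique small solution and, by a direct minimization/compactness argument using \textbf{A4}, an optimal pair. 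Then \eqref{H0} is immediate from \textbf{A3} and the explicit forms \eqref{eq:fyu}, \eqref{def_eyu}, \eqref{def_deyu}; the regular point condition \eqref{H1} follows because $e'(x_0,q_0)(v,w)=(v_t-\calA v-\calF'(\bar y)v-Bw, v(0))$ is surjective onto $L^2(I;V^*)\times Y$ for $\bar y$ small — this is the content of Proposition \ref{prop:adj} and gives the multiplier $(\bar p,\bar p_1)$ with the extra regularity $\bar p\in W_\infty$, $\bar p_1=\bar p(0)$, $\bar u\in U\cap C(\bar I;\calU)$. The positive definiteness \eqref{H2} on $\ker E$ is where a second smallness assumption enters: since $\calL''$ equals $\mathrm{Id}$ on the state/control diagonal plus a term involving $[\calF'(\bar y)^*\bar p]''$, coercivity on $\ker E$ holds once $\bar p$ (equivalently $y_0$) is small enough, which is exactly the second-order sufficiency discussed in the introduction.

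Next I would adapt the conditions to the smaller spaces $\ul X,\ul{X^*},\ul{W^*}$ of \eqref{eq3.4}. Assumption \eqref{H00} requires that $f'$ maps into $L^2(I;V^*)\times(U\cap C(\bar I;\calU))$ on the relevant neighborhood and that $e'(x,q)^*$ restricts to $\calL(\wti W_\infty, L^2(I;V^*)\times(U\cap C(\bar I;\calU)))$; the first is clear from \eqref{eq:fyu}, and the second is precisely the computation carried out after \eqref{eq:aux12}, where \textbf{A5} (i.e. $\calF'(\bar y)\in\calL(L^2(I;V),L^2(I;V^*))$) is used to show that $\wti E_1^*(p,p(0))=-p_t-\calA^*p-\calF'(\bar y)^*p\in L^2(I;V^*)$, so that $\ul{E^*}$ is well defined. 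Fréchet differentiability of $\ul{\calL'}$ with continuity of the derivative at $(x_0,q_0,\lambda_0)$ — Assumption \eqref{H000} — follows from \textbf{A3} together with Remark \ref{rem3.2}, the extra regularity of $\calF''$ bounded on bounded sets of $W_\infty$ being exactly what is needed to differentiate $[\calF'(\bar y)^*\bar p]$. The Lipschitz-in-$q$ bounds \eqref{lip_e}, \eqref{lip_de} of \eqref{H3} are trivial here because the parameter $y_0$ enters $e$ only through the affine term $y(0)-y_0$, so $e(x,q_1)-e(x,q_2)=(0,q_2-q_1)$ and $\ul{e'(x,q_1)^*}=\ul{e'(x,q_2)^*}$ — one can take $\nu=1$.

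The substantive step is \eqref{H4}: given $(\beta_1,\beta_2)\in\hat V$, the solution $(x_{(\beta_1,\beta_2)},\lambda_{(\beta_1,\beta_2)})$ of $\calT(x,\lambda)^T=(\beta_1,\beta_2)^T$, which by step (ii) of the proof of Theorem \ref{thm:lip_con} exists as an element of $X\times W^*$ and is already known to be Lipschitz in the weak norm $\|\cdot\|_X$, in fact lies in the smaller space $\ul X\times\ul{W^*}=\bigl(W_\infty\times(U\cap C(\bar I;\calU))\bigr)\times\wti W_\infty$, with the stated "bootstrap" Lipschitz estimate. This is where Proposition \ref{prop:adj} and Proposition \ref{prop:est_p} are invoked: the control equation $\alpha u=B^*p+\text{(data)}$ and the projection formula $u=\Pad\!\bigl(\tfrac1\alpha(B^*p+\dots)\bigr)$ upgrade $u$ to $C(\bar I;\calU)$ with a Lipschitz bound in terms of $p$ and the weak state norm; then the linear(ized) \emph{adjoint} equation $-p_t-\calA^*p-\calF'(\bar y)^*p=-y+\text{data}$ — an infinite-horizon backward parabolic problem whose well-posedness in $W_\infty$ rests on the exponential stability from \textbf{A2} and the characterization $p(\infty)=0$ — yields $p\in W_\infty$ together with the Lipschitz estimate $\|p\|_{W_\infty}\lesssim\|y\|_X+\|\beta\|_{\ul{X^*}\times W}$. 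The main obstacle, and the place I expect the real work to be, is exactly this infinite-horizon adjoint estimate: unlike the finite-horizon case one cannot simply integrate backward from a terminal condition, and one must combine the analyticity/maximal-regularity from \textbf{A1} with the exponential decay of the closed-loop semigroup and the smallness of $\calF'(\bar y)$ to close a fixed point for $p$ in $W_\infty$ with constants uniform over the perturbation ball — this is presumably the content of Proposition \ref{prop:est_p}. Once \eqref{H0}–\eqref{H4} are in place, Theorem \ref{thm:lip_con} yields a neighborhood $U(y_0)$ on which $y_0\mapsto(\bar y(y_0),\bar u(y_0),\bar p(y_0),\bar p(0)(y_0))$ is Lipschitz; to get differentiability of $\calV(y_0)=J(\bar y(y_0),\bar u(y_0))$ I would use the standard sensitivity identity $\calV'(y_0)=-\bar p(0)(y_0)$ (the adjoint state at $t=0$), derived by differentiating the Lagrangian and using the first-order optimality system \eqref{eq:thm_lc_1}; continuity of $y_0\mapsto\bar p(0)(y_0)$ from the Lipschitz estimate \eqref{eq:thm_lc_2} then gives $\calV\in C^1(U(y_0))$, completing the proof.
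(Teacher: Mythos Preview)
Your plan is correct and mirrors the paper's proof: Section~4 verifies \eqref{H0}--\eqref{H4} for (\hyperref[SLP]{$\calP$}) essentially as you outline (well-posedness via Lemma~\ref{exp-exi}/Corollary~\ref{cor3.1}/Lemma~\ref{WPP}, \eqref{H1} and the adjoint regularity via Proposition~\ref{prop:adj}, \eqref{H0}--\eqref{H3} in Lemma~\ref{le4.7}, \eqref{H4} in Lemma~\ref{lem:beta_p}/Proposition~\ref{prop:est_p}), then applies Theorem~\ref{thm:lip_con} (Corollary~\ref{clr:lip_con}) and derives $\calV'(y_0)=-\bar p(0;y_0)$ in Theorem~\ref{thm-CD-r} by the expansion-plus-optimality-system argument you sketch.

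The one technical point where the paper's method differs from your guess is the \eqref{H4} adjoint estimate. It is \emph{not} a fixed-point/maximal-regularity argument for $p$ (note that $\calA^*$ alone need not be stable, so the backward equation on $(0,\infty)$ with $p(\infty)=0$ is not directly amenable to that). Instead the paper first proves continuity $\bs\beta\mapsto p_{(\bs\beta)}$ in $W_\infty$ by compactness (Lemma~\ref{lem:beta_p}), uses this to find $T$ and $\varepsilon$ such that the control constraint is \emph{inactive} for $t\ge T$ whenever $\|\bs\beta\|<\varepsilon$, and then obtains the Lipschitz bound by a duality argument: on $[T,\infty)$ one tests $\langle p-\hat p,r\rangle$ against solutions $z$ of a forward \emph{closed-loop} problem $z_t-(\calA-BK)z-\calF'(\bar y)z=r$, $z(T)=0$ (so that stability of $\calA-BK$ controls $z$), while on $[0,T]$ the finite-horizon estimate is closed using the already-obtained bound on $(p-\hat p)(T)$.
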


	\nin To obtain a HJB equation we require additionally that $t \to (\calF (\bar y))(t)$ is continuous with values in $Y$  for global  solutions $(\bar y, \bar u)$ to (\hyperref[SLP]{$\calP$}), with $y_0\in \calD(\calA)$. In view of the fact that  for $y_0\in V$ we can typically expect that the solutions of semilinear parabolic equations satisfy  $y \in L^2(I;\calD(\calA)) \cap W^{1,2}(I;Y) \subset C([0,\infty),V)$ this is not a restrictive assumption beyond that what is already assumed in (\hyperref[assump]{A3}).		
	
	\begin{thm}\label{thm-HJB}
	Let the assumptions (\hyperref[assump]{A}) hold, and let $(\bar y(y_0), \bar u(y_0))$
	denote a global solution  of (\hyperref[SLP]{$\calP$}),  for $y_0\in \calD(\calA)$ with  sufficiently small  norm in $Y$. Assume that there exists $T_{y_0}>0$ such that $\calF (\bar y)\in C([0,T_{y_0});Y)$. Then the following Hamilton-Jacobi-Bellman equation holds at $y_0$:
			\begin{equation}
			\V'(y)(\calA y + \calF(y)) + \frac{1}{2} \norm{y}^2_Y + \frac{\alpha}{2} \norm{ \mathbb{P}_{\mathcal{U}_{ad}} \left(\frac{1}{\alpha}B^*\V'(y) \right)}^2_Y + \left \langle B^* \V'(y),\mathbb{P}_{\mathcal{U}_{ad}} \left(\frac{1}{\alpha}B^*\V'(y) \right) \right\rangle_Y = 0.
			\end{equation}
Moreover the optimal feedback law is given by
			\begin{equation}
			{\bar u}(0) =  \mathbb{P}_{\mathcal{U}_{ad}} \left(\frac{1}{\alpha}  B^*\V'({\bar y}(0)) \right).
			\end{equation}
		\end{thm}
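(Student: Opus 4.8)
The plan is to derive the HJB equation from the continuous differentiability of the value function (Theorem~\ref{thm-CD}) together with the dynamic programming principle and the first-order optimality system. First I would invoke Theorem~\ref{thm-CD} to know that $\V$ is $C^1$ in a neighborhood $U(y_0)$; the extra hypothesis $y_0\in\calD(\calA)$ with $\calF(\bar y)\in C([0,T_{y_0});Y)$ is used to guarantee that the optimal state $\bar y$ is a \emph{strong} solution near $t=0$, i.e. $\bar y_t(0)$ exists in $Y$ and equals $\calA y_0+\calF(y_0)+B\bar u(0)$. I would also record that along the optimal trajectory the adjoint state $\bar p$ satisfies, by the optimality system \eqref{def:T} (first and fourth rows, with $\beta=0$), the relation $\bar p(0) = \V'(y_0)$ — this identification of the initial adjoint with the gradient of the value function is the standard envelope/sensitivity relation, and it should either be read off from the proof of Theorem~\ref{thm-CD} or re-derived by differentiating $\V(y_0)=J(\bar y(y_0),\bar u(y_0))$ and using the adjoint equation to cancel the linearized-state terms.

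Next I would write Bellman's principle of optimality: for small $h>0$,
\begin{equation*}
\V(y_0) = \int_0^h \Big(\tfrac12\|\bar y(t)\|_Y^2 + \tfrac{\alpha}{2}\|\bar u(t)\|_{\calU}^2\Big)\,dt + \V(\bar y(h)).
\end{equation*}
Dividing by $h$, letting $h\to 0^+$, and using that $t\mapsto \bar y(t)$ is continuous into $Y$ with $\bar y(0)=y_0$ for the running-cost term, and the chain rule $\frac{d}{dt}\V(\bar y(t))|_{t=0}=\V'(y_0)\bar y_t(0)$ for the value term (legitimate because $\V\in C^1$ and $\bar y$ is a strong solution at $t=0$), yields
\begin{equation*}
0 = \tfrac12\|y_0\|_Y^2 + \tfrac{\alpha}{2}\|\bar u(0)\|_{\calU}^2 + \V'(y_0)\big(\calA y_0 + \calF(y_0) + B\bar u(0)\big).
\end{equation*}
Then I would substitute the pointwise form of the optimal control. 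From the second row of \eqref{def:T} together with the projection characterization of $\partial\mathbf{I}_{\Uad}$, the optimal control satisfies $\bar u(0) = \mathbb{P}_{\calU_{ad}}\big(\tfrac1\alpha B^*\bar p(0)\big) = \mathbb{P}_{\calU_{ad}}\big(-\tfrac1\alpha B^*\V'(y_0)\big)$ after using the sign convention in the adjoint equation and $\bar p(0)=\V'(y_0)$ (one must be careful here that the cost has $-\bar y$ on the right of the adjoint equation, so $\bar p(0)$ and $\V'(y_0)$ are related with a specific sign; I would double-check that the claimed feedback law fixes the convention $\V'(\bar y(0)) = \bar p(0)$ or its negative). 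Plugging $\bar u(0)=\mathbb{P}_{\calU_{ad}}(-\tfrac1\alpha B^*\V'(y_0))$ into the identity above, and writing $\tfrac{\alpha}{2}\|\bar u(0)\|_{\calU}^2 + \langle B^*\V'(y_0),\bar u(0)\rangle_{\calU} = \tfrac{\alpha}{2}\|\mathbb{P}_{\calU_{ad}}(-\tfrac1\alpha B^*\V'(y_0))\|_Y^2 + \langle B^*\V'(y_0),\mathbb{P}_{\calU_{ad}}(-\tfrac1\alpha B^*\V'(y_0))\rangle_Y$, gives precisely the stated HJB equation.

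The main obstacle I anticipate is justifying the limit $\lim_{h\to0^+}\tfrac1h(\V(\bar y(h))-\V(y_0)) = \V'(y_0)\bar y_t(0)$ rigorously in the correct topology: $\V'$ is only known to be a bounded linear functional on $Y$, while $\bar y(h)-y_0 = \int_0^h \bar y_t(s)\,ds$ converges to $\bar y_t(0)$ in $Y$ \emph{only because} of the strong-solution regularity granted by $y_0\in\calD(\calA)$ and $\calF(\bar y)\in C([0,T_{y_0});Y)$ — this is exactly where those hypotheses are consumed. I would spell out that under these hypotheses $\bar y\in C^1([0,T_{y_0});Y)$ (parabolic smoothing plus the continuity of the inhomogeneity $\calF(\bar y)+B\bar u$, noting $\bar u\in C(\bar I;\calU)$ from Theorem~\ref{thm-CD}), so that the difference quotient of $\bar y$ converges in $Y$-norm and the $C^1$ regularity of $\V$ then transfers the limit. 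A secondary, more bookkeeping-level, point is confirming the sign/identification $\V'(y_0)=\bar p(0)$ against the problem's conventions; this follows from the proof of Theorem~\ref{thm-CD} (differentiating the value function and using the adjoint system \eqref{def:T}), and I would cite that rather than re-prove it.
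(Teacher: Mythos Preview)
Your plan is correct and tracks the paper's proof closely for the main identity: you use Bellman's principle on the optimal pair, divide by $h$, and pass to the limit using $y_0\in\calD(\calA)$ together with the continuity of $\calF(\bar y)+B\bar u$ in $Y$ to evaluate $\lim_{h\to 0^+}\tfrac1h(\bar y(h)-y_0)$ in $Y$. The paper does exactly this via the variation-of-constants formula rather than asserting $\bar y\in C^1([0,T_{y_0});Y)$; the latter is a slight overstatement (continuity of the inhomogeneity alone does not give $C^1$ in time for analytic semigroups), but only the right-derivative at $t=0$ is needed, and that follows from $\tfrac1h(e^{\calA h}y_0-y_0)\to\calA y_0$ and $\tfrac1h\int_0^h e^{\calA(h-s)}f(s)\,ds\to f(0)$, which is what you should write.

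Where you genuinely diverge from the paper is in obtaining the feedback law. You read $\bar u(0)=\mathbb{P}_{\calU_{ad}}(-\tfrac1\alpha B^*\V'(y_0))$ off the pointwise optimality condition \eqref{adjP-3} combined with the identification $\V'(y_0)=-\bar p(0)$ from Theorem~\ref{thm-CD-r}; the paper instead runs a \emph{second} dynamic-programming step with an arbitrary constant control $u\in\calU_{ad}$ to derive the Hamiltonian inequality $\V'(y_0)(\calA y_0+\calF(y_0)+Bu)+\ell(y_0,u)\ge 0$, and then identifies $u_0$ as the minimizer of this quadratic over $\calU_{ad}$. Your route is shorter but depends on the adjoint--gradient formula being already established; the paper's route is self-contained within the HJB argument and simultaneously shows that the Hamiltonian is actually minimized (not merely stationary) at $\bar u(0)$, which is the content one wants for an HJB equation with constraints. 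Two small corrections: cite Proposition~\ref{prop:adj} (specifically \eqref{adjP-3}) rather than \eqref{def:T} for the projection formula, and record the sign explicitly as $\V'(y_0)=-\bar p(0)$ from Theorem~\ref{thm-CD-r}, so that $\bar u(0)=\mathbb{P}_{\calU_{ad}}(\tfrac1\alpha B^*\bar p(0))=\mathbb{P}_{\calU_{ad}}(-\tfrac1\alpha B^*\V'(y_0))$.
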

\nin The condition on the smallness of $y_0$ will be discussed in Remark \ref{rmk-Vhat} below. Roughly it involves well-posedness of the optimality system and second order sufficient optimality at local solutions. A more detailed,  respectively  stronger statement of Theorem  \ref{thm-CD} and Theorem \ref{thm-HJB}, will be given in Theorem  \ref{thm-CD-r} and Theorem  \ref{thm-HJB-r} below. The regularity assumptions $\calF (\bar y)\in C([0,T_{y_0});Y)$ of Theorem \ref{thm-HJB} will be addressed in Section \ref{sec6}.

	\section{Proof of Theorem \ref{thm-CD}.}\label{sec4}
	
	\nin In this section we give the proof for Theorem \ref{thm-CD}. Many of the technical difficulties arise from the fact that we are working with an infinite horizon optimal control problem. In this respect we can profit from techniques which were developed in \cite{BKP:2019}, which, however, do not include the case of constraints on the norm. Throughout we assume that assumptions (\hyperref[assump]{A1}) - (\hyperref[assump]{A4}) hold.
	
	\subsection{Well-posedness of problem (\hyperref[SLP]{$\calP$}).}
	
	\nin Here we prove  well-posedness for (\hyperref[SLP]{$\calP$}) with small initial data.  First, we recall two consequences of the assumption that $\calA$ is the generator of an analytic semigroup.
	\begin{consequence}\label{cons:1}
		Since $\calA$ generates a strongly continuous analytic semigroup on $Y$, there exist $\rho \geq 0$ and $\theta > 0$ such that
		\begin{equation*}
		\ip{(\rho I - \calA)v}{v}_{V^*,V} \geq \theta \norm{v}^2_V
		\end{equation*}
		\nin See \cite[Part II, Chaptor 1, p 115]{BPDM:2007}, \cite[Theorem 4.2, p14]{P:2007}.
	\end{consequence}
	
	\begin{consequence}\label{cons:2}
		For all $y_0 \in Y, f \in L^2(0,T; V^*)$, and $T > 0$, there exists a unique solution $y \in W(0,T)$ to
		\begin{equation}\label{eq3.2}
		\dot{y} = \calA y + f, \quad y(0) = y_0.
		\end{equation}
		\nin Furthermore, $y$ satisfies
		\begin{equation}
		\norm{y}_{W(0,T)} \leq c(T) \Big( \norm{y_0}_{Y} + \norm{f}_{L^2(0,T; V^*)} \Big),
		\end{equation}
		for a continuous function $c$. Assuming that $y \in L^2(0, \infty; Y)$,  consider the equation
		\begin{equation*}
		\dot{y} = \underbrace{(\calA - \rho I)y}_{\calA_{\rho}} + \underbrace{\rho y + f}_{f_{\rho}}, \quad y(0) = y_0,
		\end{equation*}
		\nin where $f_{\rho} \in L^2(I; V^*)$. Then the  operator $\calA_{\rho}$ generates a strongly continuous  analytic semigroup on $Y$ which is exponentially stable, see \cite[p 115, Theorem II.1.2.12]{BPDM:2007}. It follows that $y \in W_{\infty}$, that there exists $M_{\rho}$ such that
		\begin{equation}
		\norm{y}_{W_{\infty}} \leq M_{\rho} \Big( \norm{y_0}_{Y} + \norm{f_{\rho}}_{L^2(I; V^*)} \Big),
		\end{equation}
		\nin and that $y$ is the unique solution to \eqref{eq3.2}  in $W_{\infty}$, see \cite[Section 2.2]{BKP:2019} .
	\end{consequence}
	
	\begin{lemma}\label{lemma-Lip}
		There exists a constant $C > 0$, such that for all $\delta < (0,1]$ and for all $y_1$ and $y_2$ in $W_{\infty}$ with $\ds \norm{y_1}_{W_{\infty}} \leq \delta$ and $\ds \norm{y_2}_{W_{\infty}} \leq \delta$, it holds that
		\begin{equation}
		\norm{\calF(y_1) - \calF(y_2)}_{L^2(I; V^*)} \leq \delta C \norm{y_1 - y_2}_{W_{\infty}}.
		\end{equation}
	\end{lemma}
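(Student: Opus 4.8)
The plan is to prove the local Lipschitz estimate for $\calF$ by reducing it to the mean-value inequality for the Fréchet derivative $\calF'$, exploiting the boundedness of $\calF''$ on bounded subsets of $W_\infty$ granted by assumption (\hyperref[assump]{A3}) together with the normalization $\calF'(0)=0$ from Remark \ref{rmk-fprime}.

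First I would fix $\delta\in(0,1]$ and $y_1,y_2\in W_\infty$ with $\norm{y_i}_{W_\infty}\le\delta$. Since the ball $B_{W_\infty}(0,1)$ is convex, the segment $\{y_2+s(y_1-y_2):s\in[0,1]\}$ lies in $B_{W_\infty}(0,\delta)\subset B_{W_\infty}(0,1)$, so by the fundamental theorem of calculus for the Fréchet-differentiable map $\calF\colon W_\infty\to L^2(I;V^*)$ one has
\begin{equation*}
\calF(y_1)-\calF(y_2)=\int_0^1 \calF'\big(y_2+s(y_1-y_2)\big)(y_1-y_2)\,ds,
\end{equation*}
whence $\norm{\calF(y_1)-\calF(y_2)}_{L^2(I;V^*)}\le \sup_{s\in[0,1]}\norm{\calF'(y_2+s(y_1-y_2))}_{\calL(W_\infty,L^2(I;V^*))}\,\norm{y_1-y_2}_{W_\infty}$. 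Next I would bound the operator norm of $\calF'$ at any point $z$ with $\norm{z}_{W_\infty}\le\delta$: applying the same mean-value argument to $\calF'$ along the segment from $0$ to $z$, and using $\calF'(0)=0$, gives
\begin{equation*}
\calF'(z)=\int_0^1\calF''(\tau z)(z,\cdot)\,d\tau,
\end{equation*}
so that $\norm{\calF'(z)}_{\calL(W_\infty,L^2(I;V^*))}\le \big(\sup_{\norm{w}_{W_\infty}\le 1}\norm{\calF''(w)}\big)\,\norm{z}_{W_\infty}\le C\,\delta$, where $C$ is the bound on $\calF''$ over the unit ball of $W_\infty$ supplied by (\hyperref[assump]{A3}). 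Combining the two displays yields the claimed inequality with this $C$, uniformly in $\delta\in(0,1]$.

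The only points requiring a little care are that the integrals above are Bochner integrals of continuous $L^2(I;V^*)$-valued maps (continuity follows from the continuity of $\calF'$ and $\calF''$ in (\hyperref[assump]{A3})), and that $\calF''(w)$ is interpreted as the bounded bilinear form on $W_\infty\times W_\infty$ with values in $L^2(I;V^*)$, so its norm as a bilinear form is what enters the estimate; no genuine obstacle arises, and the argument is essentially the standard passage from a second-derivative bound to Lipschitz continuity of the first derivative and then of the map itself. I do not expect this lemma to be hard; its role is purely to provide the contraction constant $\delta C$ that will be used in the fixed-point argument for well-posedness of (\hyperref[SLP]{$\calP$}) with small data.
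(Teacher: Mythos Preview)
Your proposal is correct and follows essentially the same approach as the paper: both arguments use the mean-value inequality to bound $\calF(y_1)-\calF(y_2)$ by $\sup\norm{\calF'}$ along the segment, then use $\calF'(0)=0$ together with a second application of the mean-value inequality and the boundedness of $\calF''$ on the unit ball (assumption (\hyperref[assump]{A3})) to obtain $\norm{\calF'(z)}\le C\delta$ for $\norm{z}_{W_\infty}\le\delta$. The paper merely compresses the two steps into a single chain of inequalities with a double integral $\int_0^1\int_0^1\norm{\calF''(\cdot)}\,ds\,dt$, but the content is identical.
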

	\begin{proof}Let $y_1, y_2$ be as in the statement of the lemma. Using (\hyperref[assump]{A3}) and  Remark \ref{rmk-fprime} we obtain the estimate
		\begin{align*}
		&\norm{\calF(y_1) - \calF(y_2)}_{L^2(I,V^*)} \leq \int_0^1  \norm{\calF'(y_1+t(y_2-y_1)) - \calF'(0) }_{\calL(W_{\infty}, L^2(I,V^*))}\, dt \, \|y_2-y_1\|_{W_\infty}, \\
		&\le \int_0^1 \int_0^1 \norm{\calF''(s(y_1+t(y_2-y_1)))(ty_2+(1-t)y_1)}_{\calL(W_{\infty}, L^2(I,V^*))}\, ds dt \, \|y_2-y_1\|_{W_\infty}.
		\end{align*}
		\nin Now the claim follows by assumption (\hyperref[assump]{A3}).
	\end{proof}
	
	\begin{lemma}\label{exp-exi}
		Let $\ds \calA_s$ be the generator of an exponentially stable analytic semigroup $\ds e^{\calA_s t}$ on $Y$. Let $C$ denote the constant from Lemma \ref{lemma-Lip}. Then there exists a constant $M_s$ such that for all $y_0 \in Y$ and $f \in L^2(I; V^*)$ with
		\begin{equation*}
		\tilde \gamma = \norm{y_0}_Y + \norm{f}_{L^2(I; V^*)} \leq \frac{1}{4CM^2_s},
		\end{equation*}
		the system
		\begin{equation}\label{sys-As}
		y_t = \calA_s y + \calF(y) + f, \quad y(0) = y_0,
		\end{equation}
		has a unique solution $y \in W_{\infty}$, which  satisfies
		\begin{equation*}
		\norm{y}_{W_{\infty}} \leq 2 M_s \tilde \gamma.
		\end{equation*}
	\end{lemma}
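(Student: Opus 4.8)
The plan is to prove existence and the quantitative bound by a contraction mapping argument in a closed ball of $W_\infty$, exploiting the exponential stability of $e^{\calA_s t}$ (which gives a uniform bound $M_s$ on the solution map of the linear problem via Consequence \ref{cons:2}, applied with $\calA_\rho$ replaced by $\calA_s$) together with the small-Lipschitz estimate for $\calF$ from Lemma \ref{lemma-Lip}. First I would fix $M_s$ as the constant such that the unique solution $z \in W_\infty$ of $z_t = \calA_s z + g$, $z(0)=z_0$ obeys $\|z\|_{W_\infty}\le M_s(\|z_0\|_Y + \|g\|_{L^2(I;V^*)})$ for all $z_0\in Y$, $g\in L^2(I;V^*)$; such $M_s$ exists by Consequence \ref{cons:2} since $\calA_s$ generates an exponentially stable analytic semigroup, so no shift $\rho$ is needed. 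Define $\delta := 2M_s\tilde\gamma$; note $\tilde\gamma \le \frac{1}{4CM_s^2}$ forces $\delta \le \frac{1}{2M_s}\le 1$ (after possibly enlarging $M_s$ so that $M_s\ge 1$, which is harmless), so Lemma \ref{lemma-Lip} is applicable on the ball $\overline{B}_{W_\infty}(0,\delta)$.

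Next I would introduce the fixed-point map $\Phi: \overline{B}_{W_\infty}(0,\delta)\to W_\infty$ sending $y$ to the unique solution $z$ of $z_t = \calA_s z + \calF(y) + f$, $z(0)=y_0$. For self-mapping: using $\calF(0)=0$ (assumption (\hyperref[assump]{A3})) and Lemma \ref{lemma-Lip}, $\|\calF(y)\|_{L^2(I;V^*)} = \|\calF(y)-\calF(0)\|_{L^2(I;V^*)}\le \delta C\|y\|_{W_\infty}\le \delta^2 C$, hence
\begin{equation*}
\|\Phi(y)\|_{W_\infty} \le M_s\big(\|y_0\|_Y + \|f\|_{L^2(I;V^*)} + \delta^2 C\big) \le M_s\tilde\gamma + M_s\delta^2 C = \tfrac{\delta}{2} + M_s C\,\delta^2.
\end{equation*}
Since $\delta = 2M_s\tilde\gamma \le \frac{1}{2M_sC}$ gives $M_sC\delta \le \frac12$, the right-hand side is $\le \frac{\delta}{2}+\frac{\delta}{2}=\delta$, so $\Phi$ maps the ball into itself. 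For the contraction property: for $y_1,y_2$ in the ball, $\Phi(y_1)-\Phi(y_2)$ solves the linear problem with datum $z_0=0$ and forcing $\calF(y_1)-\calF(y_2)$, so $\|\Phi(y_1)-\Phi(y_2)\|_{W_\infty}\le M_s\|\calF(y_1)-\calF(y_2)\|_{L^2(I;V^*)}\le M_sC\delta\|y_1-y_2\|_{W_\infty}\le \frac12\|y_1-y_2\|_{W_\infty}$ by the same smallness bound. Banach's fixed point theorem then yields a unique $y$ in the ball with $\Phi(y)=y$, i.e. a solution of \eqref{sys-As} with $\|y\|_{W_\infty}\le\delta = 2M_s\tilde\gamma$.

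To finish I would upgrade uniqueness in the ball to uniqueness in all of $W_\infty$: if $y$ is any $W_\infty$ solution of \eqref{sys-As}, then it is a fixed point of $\Phi$; writing $\|y\|_{W_\infty}\le M_s(\tilde\gamma + \|\calF(y)\|_{L^2(I;V^*)})$ and using that on any bounded set $\calF$ is Lipschitz (the constant from Lemma \ref{lemma-Lip} degenerates only for large balls), one shows a priori that any solution with, say, norm $\le 1$ lies in the ball of radius $\delta$, and then a standard bootstrap/continuation or a direct Gronwall-type comparison on $W(0,T)$ rules out solutions escaping the ball. The main obstacle is precisely this global-in-$W_\infty$ uniqueness statement, since the nonlinearity's Lipschitz control in Lemma \ref{lemma-Lip} holds only on small balls; I expect it handled either by a continuity-in-norm argument ($T\mapsto \|y\|_{W(0,T)}$ is continuous and cannot cross the threshold $\delta$ where the contraction estimate applies) or by invoking that \eqref{sys-As} with the same data cannot have two solutions of controlled size, consistent with the way Consequence \ref{cons:2} and \cite[Section 2.2]{BKP:2019} are used. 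Everything else is the routine verification that the two smallness conditions $M_sC\delta\le\frac12$ and $\delta\le1$ both follow from $\tilde\gamma\le\frac{1}{4CM_s^2}$.
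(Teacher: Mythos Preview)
Your proposal is correct and follows essentially the same approach as the paper, which does not give a detailed proof but simply refers to \cite[Lemma~5]{BKP:2019}; that reference carries out precisely the Banach fixed-point argument you describe, with $M_s$ the maximal-regularity constant for the exponentially stable linear problem and Lemma~\ref{lemma-Lip} providing the small-Lipschitz estimate on the ball of radius $2M_s\tilde\gamma$. Your treatment of the side conditions (enlarging $M_s$ so that $\delta\le 1$, and the continuation argument for uniqueness in all of $W_\infty$) is appropriate and matches how such points are handled in the cited reference.
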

	
	\nin With Lemma \ref{lemma-Lip} holding, this lemma can be verified in the same manner as \cite[Lemma 5, p 6]{BKP:2019}. In the following corollary we shall use Lemma \ref{exp-exi} with $\calA_s = \calA-BK$, and the constant corresponding to $M_s$ will be denoted by $M_K$. Further $\|\calI\|$ denotes the norm of the embedding constant of $W_\infty$ into $C(I;Y)$, $\|i\|$ is the norm of the embedding $V$ into $Y$, and we recall the constant $\eta$ from \eqref{eq:Uad}.

	\begin{clr} \label{cor3.1}
		For all $\ds y_0 \in Y$ with
		\begin{equation*}
			\norm{y_0}_Y \leq \min \left\{ \frac{1}{4CM^2_K}, \frac{\eta}{2M_K\norm{K}_{\calL(Y)}\norm{\calI}} \right\},
		\end{equation*}
		there exists a control $u \in {\Uad}$ such that the system
		\begin{equation}\label{eq3.6}
		y_t = \calA y + \calF(y) + Bu, \quad y(0) = y_0,
		\end{equation}
		has a unique solution $y \in W_{\infty}$ satisfying
		\begin{equation}\label{eq3.6a}
		\norm{y}_{W_{\infty}} \leq 2 M_K \|y_0\|_Y, \ \text{ and } \ \norm{u}_{U} \leq \|K\|_{\calL(Y,\,\calU)} \|\calI\| \|y\|_{W_\infty} \le 2M_K \|y_0\|_Y \|K\|_{\calL(Y,\,\calU)}\| \calI \|.
		\end{equation}
	\end{clr}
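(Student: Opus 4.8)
The plan is to apply Lemma~\ref{exp-exi} with the exponentially stable generator $\calA_s = \calA - BK$, where $K$ is the stabilizing feedback operator from (\hyperref[assump]{A2}), and then to construct the control in closed-loop form $u = -K y$. First I would rewrite \eqref{eq3.6} as
\begin{equation*}
y_t = (\calA - BK) y + \calF(y) + B u + BK y = (\calA - BK) y + \calF(y) + B(u + Ky),
\end{equation*}
so that choosing $u$ to be the closed-loop control $u(t) = -K y(t)$ reduces the equation to $y_t = \calA_s y + \calF(y)$, i.e.\ \eqref{sys-As} with $f = 0$ and the given initial datum $y_0$. The first smallness bound $\norm{y_0}_Y \le \frac{1}{4 C M_K^2}$ is exactly the hypothesis $\tilde\gamma = \norm{y_0}_Y + \norm{f}_{L^2(I;V^*)} \le \frac{1}{4 C M_K^2}$ of Lemma~\ref{exp-exi} (with $f = 0$ and $M_s = M_K$), so that lemma furnishes a unique $y \in W_\infty$ solving $y_t = \calA_s y + \calF(y)$, $y(0) = y_0$, with $\norm{y}_{W_\infty} \le 2 M_K \norm{y_0}_Y$; this is the first bound in \eqref{eq3.6a}.

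Next I would verify that the candidate control $u = -Ky$ is actually admissible, i.e.\ $u \in \Uad$. Since $K \in \calL(Y,\calU)$ and $y \in W_\infty \hookrightarrow C(I;Y)$ with embedding constant $\norm{\calI}$, one has for a.e.\ $t$
\begin{equation*}
\norm{u(t)}_\calU = \norm{K y(t)}_\calU \le \norm{K}_{\calL(Y,\calU)} \norm{y(t)}_Y \le \norm{K}_{\calL(Y,\calU)} \norm{\calI} \norm{y}_{W_\infty} \le 2 M_K \norm{y_0}_Y \norm{K}_{\calL(Y,\calU)} \norm{\calI},
\end{equation*}
and the second smallness bound $\norm{y_0}_Y \le \frac{\eta}{2 M_K \norm{K}_{\calL(Y)} \norm{\calI}}$ guarantees the right-hand side is $\le \eta$, so $\norm{u(t)}_\calU \le \eta$ for a.e.\ $t > 0$ and hence $u \in \Uad$ by \eqref{eq:Uad}. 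The same chain of inequalities, integrated in time together with $\norm{y}_{W_\infty} \le 2 M_K \norm{y_0}_Y$, yields the norm estimate $\norm{u}_U \le \norm{K}_{\calL(Y,\calU)} \norm{\calI} \norm{y}_{W_\infty} \le 2 M_K \norm{y_0}_Y \norm{K}_{\calL(Y,\calU)} \norm{\calI}$, which is the second part of \eqref{eq3.6a}.

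Finally, uniqueness of $y$ as a solution of the original equation \eqref{eq3.6} (for this particular $u$) follows again from Lemma~\ref{exp-exi}, since any $W_\infty$ solution of \eqref{eq3.6} with $u = -Ky$ solves \eqref{sys-As} with $f = 0$, whose solution is unique. I do not anticipate a serious obstacle here: the argument is essentially a bookkeeping exercise matching the two smallness thresholds to the two conditions (exponential-stability fixed point, and admissibility of the feedback control), and the only mild subtlety is keeping the embedding constants $\norm{\calI}$, $\norm{i}$ and the operator norms of $K$ straight. One should be slightly careful that the constant $M_K$ from Lemma~\ref{exp-exi} applied to $\calA_s = \calA - BK$ is well-defined, which it is since $\calA - BK$ generates an exponentially stable analytic semigroup by (\hyperref[assump]{A2}).
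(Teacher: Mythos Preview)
Your proposal is correct and follows essentially the same approach as the paper: apply Lemma~\ref{exp-exi} with $\calA_s=\calA-BK$ and $f=0$ to the closed-loop equation obtained from the feedback choice $u=-Ky$, then use the embedding $W_\infty\hookrightarrow C(I;Y)$ together with the second smallness threshold to verify $u\in\Uad$ and the control estimate. If anything, your write-up is slightly more explicit than the paper's in separating the pointwise admissibility bound $\norm{u(t)}_\calU\le\eta$ from the $U$-norm estimate.
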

	
	\begin{proof}
		By Assumption (\hyperref[assump]{A2}), there exists $K$ such that $\calA - BK$ generates an exponentially stable analytic semigroup on $Y$. Taking $u = -Ky$, equation \eqref{eq3.6} becomes
		\begin{equation}\label{eq:3.7}
		y_t = (\calA - BK) y + \calF(y), \quad y(0) = y_0.
		\end{equation}
		Then by Lemma \ref{exp-exi} with $\tilde \gamma = \|y_0\|_Y$ there exists $M_K$ such that \eqref{eq:3.7} has a solution  $y\in W_\infty$ satisfying
		\begin{equation*}
		\norm{y}_{W_{\infty}} \leq 2 M_K  \|y_0\|_Y,
		\end{equation*}
	and thus the first inequality in \eqref{eq3.6a} holds. For every $t\in I$ we have
		\begin{equation}\label{eq3.9}
		\| u \|_U = \|Ky\|_U \le \|K\|_{\calL(Y,\,\calU)} \| y \|_Y \le \| K \|_{\calL(Y,\,\calU)} \| \calI \| \| y \|_{W_\infty} \le 2M_K \| y_0 \|_Y \| K \|_{\calL(Y,\,\calU)} \| \calI \|,
		\end{equation}
	and thus the second inequality in \eqref{eq3.6a} holds.We still need  to assert  that $u \in \Uad$. This follows from the second smallness condition on $\norm{y_0}_Y$ and \eqref{eq3.9}.
	\end{proof}	
	
	\begin{rmk} \label{rem4.1} In the above proof  stabilization was achieved by the feedback control $u = -Ky$. For this $u$ to be admissible it is needed that $\calU_{ad}$  has nonempty interior. The upper bound $\eta$
 could be allowed to be time dependent as long as it satisfies $\ds \inf_{t\ge 0}|\eta(t)|>0$.
	\end{rmk}
	
	\begin{clr}\label{clr-rho}
		Let $\ds y_0 \in Y$ and let $u \in \Uad$ be such that the system
		\begin{equation}\label{eq:3.6}
		y_t = \calA y + \calF(y) + Bu, \quad y(0) = y_0,
		\end{equation}
		has a unique solution $y \in L^2(I;Y)$. If
		\begin{align*}
		\gamma : = \norm{y_0}_Y + \norm{\rho y + Bu}_{L^2(I; V^*)}
		\leq \min \left\{ \frac{1}{4CM^2_{\rho}}, \frac{\eta}{2M_{\rho}\norm{K}_{\calL(Y)}\norm{\calI}} \right\},
		\end{align*}
		then $y \in W_{\infty}$ and it holds that
		\begin{equation*}
		\norm{y}_{W_{\infty}} \leq 2 M_{\rho} \gamma.
		\end{equation*}
	\end{clr}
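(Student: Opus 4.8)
The plan is to reduce the equation to one governed by an exponentially stable generator and then invoke Lemma \ref{exp-exi}. First I would rewrite \eqref{eq:3.6} in the shifted form already suggested in Consequence \ref{cons:2}, namely
\begin{equation*}
y_t = (\calA - \rho I) y + \big( \rho y + Bu \big), \quad y(0) = y_0,
\end{equation*}
and observe that, by Consequence \ref{cons:2} (using the analyticity hypothesis \hyperref[assump]{A1}), the operator $\calA_\rho = \calA - \rho I$ generates a strongly continuous analytic semigroup on $Y$ which is exponentially stable. Since $y \in L^2(I;Y)$ by hypothesis and $Bu \in L^2(I;V^*)$ because $B \in \calL(\calU,Y)$ and $u \in U$ (using \hyperref[assump]{A2}), the inhomogeneity $f_\rho := \rho y + Bu$ lies in $L^2(I;V^*)$, so the right-hand side of the shifted equation makes sense in $L^2(I;V^*)$.

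Next I would apply Lemma \ref{exp-exi} with $\calA_s = \calA_\rho$ and $f = f_\rho = \rho y + Bu$. The constant $M_s$ produced by that lemma for this choice of $\calA_s$ is exactly what we are calling $M_\rho$. The smallness quantity appearing in Lemma \ref{exp-exi} is $\tilde\gamma = \norm{y_0}_Y + \norm{f_\rho}_{L^2(I;V^*)} = \norm{y_0}_Y + \norm{\rho y + Bu}_{L^2(I;V^*)}$, which is precisely the quantity $\gamma$ in the statement of the corollary. The hypothesis $\gamma \le \min\{ \tfrac{1}{4CM_\rho^2},\ \tfrac{\eta}{2M_\rho \norm{K}_{\calL(Y)}\norm{\calI}} \}$ in particular forces $\gamma \le \tfrac{1}{4CM_\rho^2}$, which is the condition $\tilde\gamma \le \tfrac{1}{4CM_s^2}$ required by Lemma \ref{exp-exi}. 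Hence that lemma yields a solution $\hat y \in W_\infty$ of the shifted equation with $\norm{\hat y}_{W_\infty} \le 2 M_\rho \gamma$.

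Finally I would argue that this $\hat y$ coincides with the given $y$. Both solve the same equation \eqref{eq:3.6} with the same initial datum; $\hat y \in W_\infty \subset L^2(I;Y)$, and by hypothesis $y$ is the \emph{unique} solution of \eqref{eq:3.6} in $L^2(I;Y)$, so $y = \hat y$. Consequently $y \in W_\infty$ and $\norm{y}_{W_\infty} \le 2 M_\rho \gamma$, as claimed. The second term in the minimum defining the bound on $\gamma$ is not needed for this existence-and-estimate conclusion; it plays its role (as in Corollary \ref{cor3.1}) in ensuring that an associated feedback control remains admissible, and I would simply carry it along without using it here. The only mildly delicate point is the bookkeeping of which generator plays the role of $\calA_s$ in Lemma \ref{exp-exi} and the identification of the corresponding constant with $M_\rho$; there is no real obstacle, since all the analytic-semigroup machinery is already packaged in Consequence \ref{cons:2} and Lemma \ref{exp-exi}.
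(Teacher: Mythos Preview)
Your approach matches the paper's: shift to the exponentially stable generator $\calA_\rho = \calA - \rho I$ and invoke Lemma~\ref{exp-exi} with forcing $\tilde f = \rho y + Bu$. Two small corrections are in order.

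First, your displayed shifted equation drops the nonlinearity; it should read
\[
y_t = (\calA - \rho I)\,y + \calF(y) + (\rho y + Bu), \qquad y(0)=y_0,
\]
since $\calF(y)$ is precisely the term that Lemma~\ref{exp-exi} is designed to absorb (it is not part of $f_\rho$). Your subsequent application of the lemma with $\calA_s=\calA_\rho$ and $f=\rho y + Bu$ is correct, so this is only a slip in the display, but as written the ``shifted form'' you exhibit is not equivalent to \eqref{eq:3.6}.

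Second, your identification $\hat y = y$ is slightly circular: the $\hat y$ produced by Lemma~\ref{exp-exi} solves $\hat y_t = (\calA-\rho I)\hat y + \calF(\hat y) + \rho y + Bu$, which rearranges to $\hat y_t = \calA \hat y + \calF(\hat y) + Bu + \rho(y-\hat y)$ and thus coincides with \eqref{eq:3.6} only once you already know $\hat y = y$. The clean way to argue is the reverse: observe that the \emph{given} $y$ also solves the auxiliary equation $z_t = (\calA-\rho I) z + \calF(z) + \tilde f$ with the same fixed $\tilde f=\rho y + Bu$, and then appeal to uniqueness for \emph{that} equation rather than for \eqref{eq:3.6}. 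The paper's own proof is a single sentence (``we can apply Lemma~\ref{exp-exi} to the equivalent system'') and does not spell out this identification either; in the paper's actual use of the corollary (proof of Lemma~\ref{WPP}) the solutions are taken in $W_{\infty}$ from the outset, so the point is harmless there.
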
	
	\begin{proof}
		\nin Since $y \in L^2(I; Y)$, we can apply Lemma \ref{exp-exi} to the equivalent system
		\begin{equation*}
		y_t = (\calA - \rho I)y + \calF(y) + \tilde{f},
		\end{equation*}
		\nin where $\tilde{f} = \rho y + Bu$. This proves the assertion.
	\end{proof}
	\begin{lemma}\label{WPP}
		There exists $\delta_1 > 0$ such that for all $\ds y_0 \in B_Y(\delta_1)$, problem (\hyperref[SLP]{$\calP$}) possesses a solution $(\bar{y}, \bar{u}) \in W_{\infty} \times \Uad$. Moreover, there exists a constant $M > 0$ independent of $y_0$ such that
		\begin{equation}\label{eq:op_est}
		\max \big\{ \norm{\bar{y}}_{W_{\infty}}, \norm{\bar{u}}_U \big\} \leq M \norm{y_0}_{Y}.
		\end{equation}
	\end{lemma}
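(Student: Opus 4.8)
The strategy is a direct minimization (the direct method of the calculus of variations), combined with the a priori bounds that have already been established in this subsection. First I would fix $\delta_1$ small enough that the smallness hypotheses of Corollary \ref{cor3.1} and of Corollary \ref{clr-rho} are both satisfied whenever $\norm{y_0}_Y \le \delta_1$; this is where the threshold on the initial datum enters. For $y_0 \in B_Y(\delta_1)$, Corollary \ref{cor3.1} furnishes an admissible pair $(y,u)\in W_\infty\times\Uad$ (take $u=-Ky$), so the feasible set of $(\calP)$ is nonempty and the infimum $\V(y_0)$ is finite and nonnegative. Hence there is a minimizing sequence $(y_n,u_n)\in W_\infty\times\Uad$ with $J(y_n,u_n)\to \V(y_0)$.

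\textbf{A priori bounds.} Since $J(y_n,u_n)$ is bounded, the definition \eqref{eq:fyu} of $f=J$ gives $\norm{y_n}_{L^2(I;Y)}$ and $\norm{u_n}_{U}$ bounded; moreover $u_n\in\Uad$ gives the pointwise constraint $\norm{u_n(t)}_\calU\le\eta$. To promote the $L^2(I;Y)$ bound on $y_n$ to a $W_\infty$ bound, I would apply Corollary \ref{clr-rho}: each $y_n$ solves \eqref{eq:3.6} with control $u_n$, so once $\gamma_n=\norm{y_0}_Y+\norm{\rho y_n+Bu_n}_{L^2(I;V^*)}$ is below the threshold there, $y_n\in W_\infty$ with $\norm{y_n}_{W_\infty}\le 2M_\rho\gamma_n$. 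The subtle point is that $\gamma_n$ involves $\norm{y_n}_{L^2(I;Y)}$, which is only bounded by $\sqrt{2J(y_n,u_n)}$, and $J(y_n,u_n)\le J(y_{\text{feas}},u_{\text{feas}})\le c\,M_K^2\norm{y_0}_Y^2$ by the explicit feasible pair from Corollary \ref{cor3.1} and its estimate \eqref{eq3.6a}. So by shrinking $\delta_1$ once more (depending on $C,M_K,M_\rho,\|B\|,\rho,\|K\|,\eta,\|\calI\|$ only, not on $y_0$) we force $\gamma_n$ below the Corollary \ref{clr-rho} threshold uniformly in $n$, and obtain $\norm{y_n}_{W_\infty}\le 2M_\rho\gamma_n\le M\norm{y_0}_Y$ for a constant $M$ independent of $y_0$ and $n$, after absorbing the linear-in-$\norm{y_n}_{L^2(I;Y)}$ term.

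\textbf{Passage to the limit.} From the uniform $W_\infty$ bound and the $U$ bound, extract a subsequence with $y_n\rightharpoonup\bar y$ weakly in $W_\infty$ and $u_n\rightharpoonup\bar u$ weakly in $U$. The admissible set $\Uad$ is convex and closed in $U$, hence weakly closed, so $\bar u\in\Uad$. For the state equation \eqref{eq:3.6} the linear terms $y_t$, $\calA y$, $Bu$ pass to the limit by weak continuity; the nonlinear term is handled by assumption (\hyperref[assump]{A4}): on each finite interval $(0,T)$, $y_n\rightharpoonup\bar y$ in $W(0,T)$ and $\calF:W(0,T)\to L^1(0,T;\calH^*)$ is weak-to-weak continuous, so $\calF(y_n)\rightharpoonup\calF(\bar y)$, which suffices to identify the limit equation tested against $L^\infty(0,T;\calH)$ and then, by density of $L^\infty(0,T;\calH)$ in $L^2(0,T;V)$, against all of $L^2(0,T;V)$; letting $T\to\infty$ identifies \eqref{1.1a}. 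The initial condition $y_n(0)=y_0$ passes to the limit because $W_\infty\hookrightarrow C([0,\infty);Y)$ and evaluation at $0$ is weakly continuous on $W_\infty$. Thus $(\bar y,\bar u)$ is feasible. Finally $J$ is convex and strongly continuous on $W_\infty\times U$ (it is a sum of squared norms), hence weakly lower semicontinuous, so $J(\bar y,\bar u)\le\liminf_n J(y_n,u_n)=\V(y_0)$, forcing equality: $(\bar y,\bar u)$ is a minimizer. The bound \eqref{eq:op_est} then follows since $\norm{\bar y}_{W_\infty}\le\liminf_n\norm{y_n}_{W_\infty}\le M\norm{y_0}_Y$ and $\norm{\bar u}_U^2\le\tfrac{2}{\alpha}J(\bar y,\bar u)\le\tfrac{2}{\alpha}J(y_{\text{feas}},u_{\text{feas}})\le \text{const}\cdot\norm{y_0}_Y^2$, enlarging $M$ if necessary.

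\textbf{Main obstacle.} The delicate step is the uniform-in-$n$ $W_\infty$ bound: a priori the energy estimate only controls $y_n$ in $L^2(I;Y)$, and bootstrapping to $W_\infty$ via Corollary \ref{clr-rho} requires the smallness parameter $\gamma_n$ to stay below a fixed threshold, which in turn must be guaranteed by a single smallness condition on $\norm{y_0}_Y$ obtained by comparison with the explicit stabilizing feedback of Corollary \ref{cor3.1}. Keeping all constants independent of $y_0$ while closing this loop is the technical heart of the argument; the weak-compactness and lower-semicontinuity parts are then routine given assumptions (\hyperref[assump]{A1})--(\hyperref[assump]{A4}).
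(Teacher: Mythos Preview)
Your proposal is correct and follows essentially the same approach as the paper's proof: feasibility via the stabilizing feedback of Corollary~\ref{cor3.1}, a minimizing sequence with $L^2(I;Y)\times U$ bounds from the cost, the bootstrap to a uniform $W_\infty$ bound via Corollary~\ref{clr-rho} (after a further reduction of $\delta_1$), weak limits with the nonlinearity handled by (\hyperref[assump]{A4}) and the density of $L^\infty(0,T;\calH)$ in $L^2(0,T;V)$, and weak lower semicontinuity of $J$. You have also correctly identified the technical heart of the argument, namely closing the smallness loop so that $\gamma_n$ stays below the threshold of Corollary~\ref{clr-rho} uniformly in $n$, with constants depending only on the structural data and not on $y_0$.
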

	\begin{proof}
		\nin The proof of this lemma follows with analogous argumentation as provided in \cite[Lemma 8]{BKP:2019}. Let us choose, $\delta_1 \leq \min \left\{ \frac{1}{4CM^2_K}, \frac{\eta}{2M_K\norm{K}_{\calL(Y)}\norm{\calI}} \right\}$, where $C$ as in Lemma  \ref{lemma-Lip} and $M_K$ denotes the constant from the Corollary \ref{cor3.1}. We obtain that for each $y_0 \in B_Y(\delta_1)$, there exists a control $u \in \Uad$ with associated state $y$ satisfying
		\begin{equation}\label{max_yu}
		\max \big\{ \norm{u}_{U}, \norm{y}_{W_{\infty}} \big\} \leq \tilde M \norm{y_0}_Y,
		\end{equation}
		\nin where $\ds \tilde M = 2M_K \text{ max } \big(1, \norm{i} \norm{K}_{\calL(Y,\,\calU)} \big)$. We can thus consider a minimizing sequence $\ds (y_n, u_n)_{n \in \mathbb{N}} \in W_\infty \times \Uad$ with $\ds J(y_n, u_n) \leq \frac{1}{2}M^2 \norm{y_0}^2_Y(1 + \alpha)$. For all $n \in \mathbb{N}$ that
		\begin{equation}\label{eq:aux1}
		\norm{y_n}_{L^2(I;Y)} \leq \tilde M \norm{y_0}_Y \sqrt{1 + \alpha} \quad \text{and} \quad \norm{u_n}_{L^2(I;\,\calU)} \leq \tilde M \norm{y_0}_Y \sqrt{\frac{1 + \alpha}{\alpha}}.
		\end{equation}
		\nin We set $\eta(\alpha,\tilde M) = \Big[ 1 + \tilde M \|i\|\ \sqrt{(1 + \alpha)} \Big( \rho + \frac{\norm{B}_{\calL(\calU, Y)}}{\sqrt{\alpha}} \Big) \Big]$. Then we have $\norm{y_0} + \norm{\rho y_n + B u_n }_{L^2(I;V^*)} \leq \eta(\alpha,\tilde M) \norm{y_0}_Y$. After further reduction of $\delta_1$, we obtain with $M_{\rho}$ from Corollary \ref{clr-rho}:
		\begin{equation*}
			\norm{y_0} + \norm{\rho y_n + B u_n }_{L^2(I;V^*)} \leq \frac{1}{4CM^2_{\rho}}.
		\end{equation*}	
		It  follows from this corollary that the sequence $\{y_n\}_{n \in \mathbb{N}}$ is bounded in $W_{\infty}$ with
		\begin{equation}\label{eq:aux2}
		\sup_{n \in \mathbb{N}} \norm{y_n}_{W_{\infty}} \leq 2 M_\rho (1+\eta(\alpha, \tilde M)) \norm{y_0}_{Y}.
		\end{equation}
		Extracting if necessary a subsequence, there exists $\ds (\bar{y}, \bar{u}) \in W_{\infty} \times U$ such that $\ds ({y}_n, {u}_n) \rightharpoonup (\bar{y}, \bar{u}) \in W_{\infty} \times U$, and $(\bar{y}, \bar{u})$ satisfies \eqref{max_yu}.\\
		
		\nin Let us prove that $(\bar{y}, \bar{u})$ is feasible and optimal.  Since $\Uad$ is weakly sequentially closed and $u_n \in \Uad$, we find {$\bar{u} \in \Uad$}.
		For each  fixed $T > 0$  and arbitrary $z \in L^{\infty}(0;T;\calH) \subset L^2(0,T;V)$, see (\hyperref[assump]{A4}), we have for all $\ds n \in \mathbb{N}$ that
		\begin{equation}\label{eq:aux3}
		\int_{0}^{T} \ip{\dot{y}_n(t)}{z(t)}_{V^*,V} dt = \int_{0}^{T} \ip{\calA y_n (t) + \calF(y_n(t)) + Bu_n(t)}{z(t)}_{V^*,V} dt.
		\end{equation}
		\nin Since $\dot{y}_n \rightharpoonup \dot{y}$ in $L^2(0,T; V^*)$, we can pass to the limit in the l.h.s. of the above equality. Moreover, since $\calA y_n \rightharpoonup \calA y$ in $L^2(0,T; V^*)$,
		\begin{equation*}
		\int_{0}^{T} \ip{\calA y_n (t)}{z(t)}_{V^*,V} dt \xrightarrow[n \rightarrow \infty]{} \int_{0}^{T} \ip{\calA \bar{y}(t)}{z(t)}_{V^*,V} dt.
		\end{equation*}
		\nin Analogously, we obtain that
		\begin{equation*}
		\int_{0}^{T} \ip{B u_n (t)}{z(t)}_{V^*,V} dt \xrightarrow[n \rightarrow \infty]{} \int_{0}^{T} \ip{B \bar{u}(t)}{z(t)}_{V^*,V} dt.
		\end{equation*}
		\nin If moreover $z \in L^{\infty}(0,T;\calH) \subset L^2(0,T;V)$, we use (\hyperref[assump]{A4}) to assert
		\begin{equation*}
		\int_{0}^{T} \ip{\calF (y_n(t)) - \calF(\bar{y}(t))}{z(t)}_{V^*,V} dt = \int_{0}^{T} \ip{\calF (y_n(t)) - \calF(\bar{y}(t))}{z(t)}_{\calH^*,\calH} dt
		\xrightarrow[n \rightarrow \infty]{} 0 .
		\end{equation*}
		\nin Thus we have for all $z \in L^{\infty}(0,T;\calH)$
		\begin{equation}\label{eq:aux4}
		\int_{0}^{T} \ip{\dot{y}(t) - \calA y(t) - Bu(t)}{z(t)}_{V^*,V} dt = \int_{0}^{T} \ip{\calF(y(t))}{z(t)}_{V^*,V} dt.
		\end{equation}
		\nin Since $\ds \dot{y} - \calA y - Bu \in L^2(0,T;V^*)$ and $L^{\infty}(0,T;\calH)$ is dense in $L^2(0,T;V)$ we conclude that \eqref{eq:aux4} holds for all $z \in L^2(0,T;V)$ and $T > 0$.	This yields $e(\bar{y},\bar{u}) = (0,0)$, and thus $(\bar{y},\bar{u})$  is feasible. By weak lower semicontinuity of norms it follows that $\ds J(\bar{y},\bar{u}) \leq \liminf_{n \rightarrow \infty} J(\bar{y}_n,\bar{u}_n)$, which proves the optimality of $(\bar{y},\bar{u})$, and \eqref{eq:op_est} follows from \eqref{eq:aux1}.
		\end{proof}
	
		\nin For the derivation of the optimality system for (\hyperref[SLP]{$\calP$}), we need the following lemma which is taken from  \cite[Lemma 2.5]{BKP:2018}.
	
	\begin{lemma}\label{lem-pur}
		Let $\ds G \in \calL(W_{\infty},L^2(I;V^*))$ such that $\ds \norm{G} < \frac{1}{M_K}$, where $\norm{G}$ denotes the operator norm of $G$. Then for all $\ds f \in L^2(I;V^*)$ and $y_0 \in Y$, there exists a unique solution to the problem:
		\begin{equation*}
		y_t = (\calA - BK)y(t) + (Gy)(t) + f(t), \quad y = y_0.
		\end{equation*}
		\nin Moreover,
		\begin{equation*}
		\norm{y}_{W_{\infty}} \leq \frac{M_K}{1 - {M_K \|G\|}} \left( \norm{f}_{L^2(I;V^*)} + \norm{y_0}_Y \right).
		\end{equation*}
	\end{lemma}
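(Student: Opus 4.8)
The plan is to recast the problem as a fixed-point equation for the linear dynamics governed by $\calA - BK$ and then invoke the Banach contraction principle. First I would record the relevant linear estimate: by Assumption (\hyperref[assump]{A2}) the operator $\calA - BK$ generates an exponentially stable analytic semigroup on $Y$, so, arguing exactly as in Consequence \ref{cons:2} with $\calA_\rho$ replaced by $\calA - BK$, for every $g \in L^2(I;V^*)$ and $y_0 \in Y$ the Cauchy problem $z_t = (\calA - BK)z + g$, $z(0) = y_0$, has a unique solution $z \in W_{\infty}$, and the solution operator $\calS: L^2(I;V^*) \times Y \to W_{\infty}$, $(g,y_0) \mapsto z$, is linear and bounded with
\[
\norm{\calS(g,y_0)}_{W_{\infty}} \le M_K \left( \norm{g}_{L^2(I;V^*)} + \norm{y_0}_Y \right),
\]
$M_K$ being precisely the constant introduced just before Corollary \ref{cor3.1} (more precisely, its linear part).

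Next I would introduce $\Phi: W_{\infty} \to W_{\infty}$ by $\Phi(w) = \calS(Gw + f, y_0)$, which is well defined since $Gw + f \in L^2(I;V^*)$ whenever $w \in W_{\infty}$, using $G \in \calL(W_{\infty}, L^2(I;V^*))$. A function $y \in W_{\infty}$ solves $y_t = (\calA - BK)y + Gy + f$ with $y(0) = y_0$ if and only if it is a fixed point of $\Phi$. By linearity of $\calS$ and the estimate above,
\[
\norm{\Phi(w_1) - \Phi(w_2)}_{W_{\infty}} = \norm{\calS\big(G(w_1 - w_2),0\big)}_{W_{\infty}} \le M_K \norm{G}\, \norm{w_1 - w_2}_{W_{\infty}},
\]
and the hypothesis $\norm{G} < 1/M_K$ makes $\Phi$ a contraction on the Banach space $W_{\infty}$. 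Hence $\Phi$ has a unique fixed point $y$, which is the unique solution of the problem.

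For the quantitative bound I would insert the fixed point into the estimate for $\calS$:
\[
\norm{y}_{W_{\infty}} = \norm{\calS(Gy + f, y_0)}_{W_{\infty}} \le M_K \left( \norm{G}\,\norm{y}_{W_{\infty}} + \norm{f}_{L^2(I;V^*)} + \norm{y_0}_Y \right),
\]
and, since $M_K\norm{G} < 1$, absorb the first term on the right into the left-hand side to conclude
\[
\norm{y}_{W_{\infty}} \le \frac{M_K}{1 - M_K\norm{G}} \left( \norm{f}_{L^2(I;V^*)} + \norm{y_0}_Y \right).
\]

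I do not expect a genuine obstacle here; the only point requiring a little care is the first step, namely that the linear solution operator for $\calA - BK$ maps $L^2(I;V^*) \times Y$ into $W_{\infty}$ with the uniform constant $M_K$, but this is exactly the maximal-regularity-type estimate for the exponentially stable analytic semigroup $e^{(\calA - BK)t}$ already recorded in Consequence \ref{cons:2}. Since the statement is quoted from \cite[Lemma 2.5]{BKP:2018}, one could alternatively simply appeal to that reference.
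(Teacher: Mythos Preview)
Your proposal is correct and is precisely the standard contraction argument one expects here; the paper itself does not supply a proof but merely cites \cite[Lemma 2.5]{BKP:2018}, so your write-up in fact fills in what the paper omits. The only minor remark is that the constant $M_K$ is, by definition, exactly the constant from Consequence \ref{cons:2} applied to the exponentially stable generator $\calA - BK$, so your first step is immediate rather than something that needs separate justification.
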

	
	\nin We close this section by deriving the optimality conditions for (\hyperref[SLP]{$\calP$}).
	
	\begin{prop}\label{prop:adj}
		Let the assumptions (\hyperref[assump]{A1}) - (\hyperref[assump]{A4}) hold. Then there exists $\delta_2 \in (0,\delta_1]$ such that each local solution $(\bar{y}, \bar{u})$ with $y_0 \in B_Y(\delta_2)$ is a regular point, i.e. \eqref{eq:rp_h1} is satisfied, and there exists an adjoint state $(\bar p, \bar p_1) \in L^2(I;V) \times Y$ satisfying		
		\begin{align}
		\ip{v_t - \calA v - \calF'(\bar{y})v}{\bar p}_{L^2(I;V^*), L^2(I;V)} + \ipp{v(0)}{\bar p_1}_Y + \ipp{\bar y}{\bar p}_{L^2(I;V)} &= 0, \quad \text{for all } v \in W_{\infty}, \label{adjP-1}\\
		\ip{\alpha \bar{u} - B^* \bar p}{u - \bar{u}}_{U} & \geq 0, \quad \text{for all } u \in \Uad. \label{adjP-3}
		\end{align}
		\nin If the assumption (\hyperref[assump]{A5}) is satisfied, then
		\begin{equation*}
			- \bar p_t - \calA^* \bar p - \calF'(\bar{y})^* \bar p = -\bar{y}, \text{  in  } L^2(I;V^*),
		\end{equation*}	
		and hence $\bar p \in W_{\infty}$ and
		\begin{equation}
			\lim_{t \rightarrow \infty} \bar p(t) = 0. \label{adjP-2}
		\end{equation}
		Moreover, there exists $\wti{M} > 0$, independent of $y_0 \in B_Y(\delta_2)$, such that
		\begin{equation}\label{adjP-4}
		\norm{\bar p}_{W_{\infty}} \leq \wti M \norm{y_0}_Y, \text{ and } u \in C(\ol{I}, \calU).
		\end{equation}
	\end{prop}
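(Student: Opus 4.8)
The plan is to verify, for the concrete data \eqref{eq:fyu}--\eqref{def_deyu}, the regular point condition \eqref{H1} (which first yields a multiplier in the ``large'' space $W^\ast=L^2(I;V)\times Y$), then to read off the first order system \eqref{adjP-1}, \eqref{adjP-3} from \eqref{eq:rp_h1}, and finally, under (\hyperref[assump]{A5}), to bootstrap the adjoint into $W_\infty$ and establish the data-linear bound \eqref{adjP-4}. For the regular point condition I would show that $e'(\bar y,\bar u,\bar y_0)$, given by \eqref{def_deyu}, maps a neighbourhood of $0$ in $C-x_0=W_\infty\times(\Uad-\bar u)$ onto a neighbourhood of $0$ in $W=L^2(I;V^\ast)\times Y$. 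Given small $(f,z)$, solve the closed-loop linearisation $v_t=(\calA-BK)v+\calF'(\bar y)v+f$, $v(0)=z$: this is uniquely solvable in $W_\infty$ with $\norm{v}_{W_\infty}\le c(\norm{f}_{L^2(I;V^\ast)}+\norm{z}_Y)$ by Lemma~\ref{lem-pur}, which applies because, by (\hyperref[assump]{A3}), $\calF'(0)=0$ together with Lemma~\ref{lemma-Lip}/Lemma~\ref{WPP}, $\norm{\calF'(\bar y)}_{\calL(W_\infty,L^2(I;V^\ast))}\le C\norm{\bar y}_{W_\infty}\lesssim\norm{y_0}_Y<1/M_K$ once $\delta_1$ is shrunk to some $\delta_2$. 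With $w=-Kv$ one has $e'(\bar y,\bar u,\bar y_0)(v,w)=(f,z)$; since $v$ is small in $W_\infty\hookrightarrow C(\ol{I};Y)$, $w=-Kv$ is small in $C(\ol{I};\calU)$, hence $w\in\Uad-\bar u$ as soon as the reference control is strictly feasible, which holds for $\norm{y_0}_Y$ small (and is confirmed a posteriori below). Thus \eqref{H1} holds, and by \cite{MZ:1979} there exists $(\bar p,\bar p_1)\in L^2(I;V)\times Y$ satisfying \eqref{eq:rp_h1}.

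Writing \eqref{eq:rp_h1} coordinate-wise with $f$, $e$ as in \eqref{eq:fyu}, \eqref{def_eyu}: the $y$-slot (unconstrained, since $\Uad$ constrains only $u$) is an equality and gives the weak adjoint identity \eqref{adjP-1} for all $v\in W_\infty$, while the $u$-slot (constrained by $\Uad$) gives the variational inequality \eqref{adjP-3}. Under (\hyperref[assump]{A5}) one has $\calF'(\bar y)^\ast\in\calL(L^2(I;V),L^2(I;V^\ast))$, so testing \eqref{adjP-1} with $v\in C_c^\infty((0,\infty);V)$ produces $-\bar p_t-\calA^\ast\bar p-\calF'(\bar y)^\ast\bar p=-\bar y$ in $L^2(I;V^\ast)$; since $\bar p\in L^2(I;V)$ this identifies $\bar p_t\in L^2(I;V^\ast)$, hence $\bar p\in W_\infty$, and then $W_\infty\hookrightarrow C(\ol{I};Y)$ together with the decay of $W_\infty$-functions gives \eqref{adjP-2}. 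Re-testing \eqref{adjP-1} with general $v\in W_\infty$, where an integration by parts is now legitimate, the boundary term forces $\bar p_1=\bar p(0)$. Pointwise in $t$, \eqref{adjP-3} reads $\bar u(t)=\mathbb{P}_{\mathcal{U}_{ad}}\!\big(\tfrac1\alpha B^\ast\bar p(t)\big)$; as $B^\ast\bar p\in C(\ol{I};\calU)$ and $\mathbb{P}_{\mathcal{U}_{ad}}$ is non-expansive, $\bar u\in C(\ol{I};\calU)$.

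The estimate \eqref{adjP-4} is the main obstacle, and it is exactly where the infinite horizon makes itself felt: a direct energy estimate on the adjoint equation does not close, since $\calA$ need not be dissipative (Consequence~\ref{cons:1} only controls $\rho I-\calA$, with $\rho$ possibly large), and in fact the homogeneous adjoint equation admits nontrivial decaying solutions, so $\bar p$ is determined only through its coupling to $\bar y$. I would therefore set up a contraction argument for the coupled optimality system (state equation, adjoint equation, $\bar u=\mathbb{P}_{\mathcal{U}_{ad}}(\tfrac1\alpha B^\ast\bar p)$), rewritten by means of the exponentially stabilising Riccati feedback so that the stable closed-loop semigroup controls both $\bar y$ and $\bar p$ in $W_\infty$; the smallness and essentially quadratic character of $\calF$ near the origin ((\hyperref[assump]{A3}), $\calF'(0)=0$, Lemma~\ref{WPP}) make the map a contraction, and uniqueness of the multiplier (from surjectivity of $E=e'(\bar y,\bar u,\bar y_0)$, established in Step~1) identifies its fixed point with $(\bar y,\bar p)$. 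Since all estimates involved are linear in the data, the resulting $\wti{M}$ is independent of $y_0\in B_Y(\delta_2)$; this also yields $\norm{\bar u}_{C(\ol{I};\calU)}\lesssim\norm{y_0}_Y$, so the norm constraint is inactive along $\bar u$ for $\delta_2$ small, which closes the loop with the strict feasibility used in Step~1. This part parallels \cite[Proposition~15]{BKP:2019}, and the genuinely delicate point is the characterisation of $\bar p$ as $t\to\infty$.
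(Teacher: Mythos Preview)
Your treatment of the regular point condition, the derivation of \eqref{adjP-1}--\eqref{adjP-3}, and the bootstrap to $\bar p\in W_\infty$ under (\hyperref[assump]{A5}) follow the paper's argument almost verbatim. One small difference: you check $w=-Kv\in\Uad-\bar u$ by invoking strict feasibility of $\bar u$, which you then promise to establish a posteriori from \eqref{adjP-4}; the paper instead sets up the linearised equation so that $w=-Kz$ itself must lie in $\Uad$ (the source term picks up an extra $B\bar u$), and this is verified directly from $\norm{\bar u}_U\lesssim\norm{y_0}_Y$ in Lemma~\ref{WPP}, avoiding any circularity.

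For the bound \eqref{adjP-4} the paper takes a different and more elementary route than your proposed Riccati-based contraction. It argues by duality: for small $r\in L^2(I;V^*)$ it solves the auxiliary forward problem $z_t-\calA z-\calF'(\bar y)z-B(w-\bar u)=-r$, $z(0)=0$, with $w=-Kz\in\Uad$, integrates by parts using $z(0)=0$ and $\bar p(t)\to 0$, and obtains
\[
\langle\bar p,r\rangle=\langle\bar y,z\rangle+\langle B^*\bar p,w-\bar u\rangle.
\]
The crucial point is that the variational inequality \eqref{adjP-3} with the \emph{admissible} test control $w$ gives $\langle B^*\bar p,w-\bar u\rangle\le\alpha\langle\bar u,w-\bar u\rangle$, so the right-hand side is bounded by $C\norm{y_0}_Y$ uniformly in $r$, yielding $\norm{\bar p}_{L^2(I;V)}\le\tilde C\norm{y_0}_Y$; the $W_\infty$-bound then follows from the adjoint equation. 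This handles the control constraint directly and needs no Riccati machinery.

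Your contraction sketch has a genuine gap precisely at the constraint. In the unconstrained setting of \cite[Proposition~15]{BKP:2019} one can write $p=\Pi y+\varphi$ with $\Pi$ the Riccati operator, and the closed-loop generator $\calA-\tfrac1\alpha BB^*\Pi$ stabilises both the state and the $\varphi$-equation. Here, however, the control law is $\bar u=\mathbb P_{\calU_{ad}}(\tfrac1\alpha B^*\bar p)$, so the substitution $p=\Pi y+\varphi$ no longer produces the closed-loop generator in the state equation, and you do not explain how the backward adjoint equation is controlled on the infinite horizon. Arguing that the constraint is inactive for small data is circular: you need $\norm{\bar p}_{C(\bar I;Y)}$ small to deactivate the projection, but that is exactly what you are trying to prove. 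The paper's duality argument sidesteps this entirely because the inequality \eqref{adjP-3} is used as is, without ever assuming the constraint is inactive.
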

	\begin{proof}
		To verify the regular point condition, we evaluate $e$ defined in \eqref{def_eyu} at $(\bar{y}, \bar{u}, y_0)$. To check the claim on the range of $e'(\bar{y}, \bar{u}, y_0)$ we consider for arbitrary $\ds (r,s) \in L^2(I,V^*) \times Y$ the equation
		\begin{equation}\label{eq:zeq}
		z_t - \calA z - \calF'(\bar{y})z - B(w - \bar{u}) = r, \ z(0) = s,
		\end{equation}
		\nin for unknowns $(z,w) \in W_{\infty} \times {\Uad}$. By taking $w = -Kz \in U$ we obtain
		\begin{equation}\label{eq:aux13}
		z_t - (\calA - BK)z - \calF'(\bar{y})z + B \bar{u} = r, \ z(0) = s. \nonumber
		\end{equation}
		\nin {We apply Lemma \ref{lem-pur} to this equation with $\ds G = - \calF'(\bar{y})$ and $ f = r - B \bar{u}$. By Lemma \ref{WPP} and \eqref{eq:fprime} in Remark \ref{rmk-fprime} there exists $\delta_2 \in (0,\delta_1]$ such that $\|\calF'(\bar{y})\|_{\calL(W_{\infty},L^2(I;V^*))} \le \frac{1}{2} M_K$. Consequently by Lemma \ref{lem-pur}  there exists $\wti{M}$ such that}
		\begin{align}
		\norm{z}_{W_{\infty}} &\leq \wti{M} \big( \norm{r}_{L^2(I; V^*)} + \norm{s}_{Y} + \norm{B}_{ \calL(\calU, Y)} \norm{\bar{u}}_U\big) \nonumber \\
		&\leq \wti{M} \big( \norm{r}_{L^2(I; V^*)} + \norm{s}_{Y} + \norm{B}_{\calL(\calU, Y)} M \norm{y_0}_Y \big) \label{eq:z_est},
		\end{align}
		\nin with $M$ as in \eqref{eq:op_est}. We shall need to check whether $w = -Kz$ is feasible, which will be the case if $w(t) \leq \eta $ for a.e. $t \in I$. Indeed we have
		\begin{equation}\label{eq:w_est}
		\norm{w(t)}_{\mathcal U} \leq \norm{K}_{\calL(Y,\,\calU )} \norm{z(t)}_{Y} \leq \norm{K}_{\calL(Y,\,\calU)} \norm{\calI} \wti{M} \big( \norm{r}_{L^2(I; V^*)} + \norm{s}_{Y} + \norm{B}_{\calL(\calU, Y)} M \norm{y_0}_Y \big). \nonumber
		\end{equation}
		\nin Consequently, possibly after further reducing $\delta_2$, and choosing $\tilde{\delta} > 0$ sufficiently small we have
		\begin{equation}
		\norm{w}_{L^{\infty}(I;Y)} \leq \eta \text{ for all } \ds y_0 \in B_Y(\delta_2) \text{ and all } (r,s) \text{ satisfying } \norm{(r,s)}_{L^2(I;V^*) \times Y} \leq \tilde{\delta}.
		\end{equation}
		\nin Consequently the regular point condition is satisfied. Hence there exists a multiplier $\ds \lambda = (p,p_1) \in L^2(I;V) \times Y$ satisfying,
		\begin{equation}\label{eq:Lderi}
		\ip{\calL_y (\bar{y}, \bar{u}, y_0, \bar p, \bar p_1)}{v}_{L^2(I;V^*), L^2(I;V)} = 0, \quad \ip{\calL_u (\bar{y}, \bar{u}, y_0, \bar p, \bar p_1)}{u - \bar{u}}_{U} \geq 0, \ \forall u \in \Uad,
		\end{equation}
		\nin where
		\begin{equation*}
		\calL(y, u, y_0, p, p_1) = J(y,u) + \int_0^{\infty} \ip{p}{y_t - \calA y - \calF(y) - Bu}_{V, V^*}dt + \ip{p_1}{y(0) - y_0}_Y.
		\end{equation*}
		\nin This implies that \eqref{adjP-1} holds.\\
		
		\nin Now, if we impose the additional assumption (\hyperref[assump]{A5}), we have $\calF'(\bar{y})^* \bar p \in L^2(I;V^*)$. Thus
		$- \calA^* \bar p - \calF'(\bar{y})^* \bar p + \bar{y} \in L^2(I;V^*)$ and  the previous identity implies that $\bar p \in W_{\infty}$. Thus we derive
		\begin{equation*}
		- \bar p_t - \calA^* \bar p - \calF'(\bar{y})^* \bar p = -\bar{y} \ \text{in } L^2(I;V^*) \text{ and } \lim_{t \rightarrow \infty} \bar p(t) = 0,
		\end{equation*}
		and \eqref{adjP-1}-\eqref{adjP-2}. Testing the first identity in \eqref{eq:Lderi} with $v\in L^2(I;V)$ we also have $\bar p_1 = \bar p(0)$, which is well-defined since $\bar p \in W_{\infty} \subset C(I;Y)$. The second identity in \eqref{eq:Lderi} gives \eqref{adjP-3}. It remains to estimate $\ds \bar p \in W_\infty$.\\
		
		\nin Let $\ds r \in L^2(I;V^*)$ with $\ds \norm{r}_{L^2(I;V^*)} \leq \tilde{\delta}$ and consider
		\begin{equation}\label{eq:zeq_new}
		z_t - \calA z - \calF'(\bar{y})z - B (w-\bar{u}) = -r, \ z(0) = 0.
		\end{equation}
		\nin Arguing as in \eqref{eq:zeq}-\eqref{eq:z_est} there exists a solution to \eqref{eq:zeq_new} with $w = - Kz$ such that
		\begin{equation}\label{eq:z_est_new}
		\norm{z}_{W_{\infty}} \leq \wti{M} \big( \tilde{\delta} + \norm{B}_{\calL(\calU, Y)} M \norm{y_0}_Y \big) \leq \wti{M} \big( \tilde{\delta} + \norm{B}_{\calL(\calU,  Y)} M \delta_2 \big) =: C_1.
		\end{equation}
		\nin  From \eqref{eq:w_est} we have that $\ds \norm{w}_{L^{\infty}(I,\,\calU )} \leq \eta$. Let us now observe that
		\begin{align*}
		\ip{\bar p}{r}_{L^2(I,V), L^2(I,V^*)} &= \ip{\bar p}{-z_t + \calA z + \calF'(\bar{y})z}_{L^2(I,V), L^2(I,V^*)} + \ip{\bar p}{B(w-\bar{u})}_{L^2(I;Y)},\\
		&= \ip{\bar p_t + \calA^* \bar p + \calF'(\bar{y})^* \bar p}{z} + \ip{B^* \bar p}{w - \bar{u}}_U,
		\end{align*}
		\nin where we have used that $z(0) = 0$ and $\ds \lim_{t \rightarrow \infty} \bar p(t) = 0$, since $\bar p \in W_\infty$. We next estimate using \eqref{adjP-1}, \eqref{adjP-2} and \eqref{eq:z_est_new}
		\begin{align*}
		\ip{\bar p}{r}_{L^2(I,V), L^2(I,V^*)} &\leq \norm{\bar{y}}_{L^2(I,V^*)} \norm{z}_{L^2(I,V)} + \alpha \ip{\bar{u}}{w - \bar{u}}_U
		&\leq \left( \norm{\bar{y}}_{L^2(I,V^*)} + \alpha \norm{\bar{u}}_U \right) \left( C_1 + \eta + \norm{\bar{u}}_U \right).
		\end{align*}
		\nin By \eqref{eq:op_est}, this implies the existence of a constant $C_2$ such that
		\begin{equation*}
		\sup_{\norm{r}_{L^2(I,V^*)} \leq \tilde \delta} \ip{\bar p}{r}_{L^2(I,V), L^2(I,V^*)} \leq C_2 \norm{y_0}_Y,
		\end{equation*}
		and thus
		\begin{equation}\label{est:p}
		\norm{\bar p}_{L^2(I,V)} \leq \frac {C_2}{\tilde \delta} \norm{y_0}_Y, \quad \text{for all } y_0 \in B_Y(\delta_2).
		\end{equation}
		\nin  Now we estimate, again using (\hyperref[assump]{A5})
		\begin{align*}
		\norm{\bar p_t}_{L^2(I;V^*)} &\leq \norm{\calA^* \bar p + \calF'(\bar{y})^* \bar p - \bar{y}}_{L^2(I;V^*)}
		\leq C_3\norm{\bar p}_{L^2(I,V)} + C_4 \norm{\bar p}_{L^2(I,V)} + \norm{\bar{y}}_{L^2(I;V^*)}.
		\end{align*}
		\nin By \eqref{eq:op_est} and \eqref{est:p} we obtain $\norm{\bar p_t} _{L^2(I;V^*)} \leq C_5 \norm{y_0}_Y.$
		Combining this estimate with \eqref{est:p}  yields \eqref{adjP-4}. Finally, by \eqref{adjP-3} we find $\ds \bar u(t) = \mathbb{P}_{\calU_{ad}} \left( \frac{1}{\alpha} B^* \bar{p}(t) \right)$. Since $\bar p \in C(\bar I; Y)$ and $B^* \in \calL(Y,U)$ this implies that $u \in C(\bar I; \calU)$.
	\end{proof}
	
	\subsection{Verification of \eqref{H0}-\eqref{H3}.}
	
	\nin In this section we specialize the previously proved abstract results in Section \ref{Sec-abs_lip} to the semilinear parabolic setting. We start with the following lemma which shows that  assumptions \hyperref[assump]{A} imply \eqref{H0}-\eqref{H3}.
	
	\begin{lemma}\label{le4.7}
		Consider problem (\hyperref[SLP]{$\calP$}) with  assumptions (\hyperref[assump]{A1})-(\hyperref[assump]{A4}) holding. Then \eqref{H0}--\eqref{H00}, \eqref{H3} are satisfied for (\hyperref[SLP]{$\calP$}) uniformly for all $y_0 \in B_Y(\wti{\delta}_2)$ for some  $\ds \wti{\delta}_2 \in (0, \delta_2]$. If moreover, (\hyperref[assump]{A5}) holds, then \eqref{H000} holds as well.
	\end{lemma}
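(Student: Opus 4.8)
\nin The plan is to verify \eqref{H0}, \eqref{H1}, \eqref{H2}, \eqref{H00}, \eqref{H3} one at a time (and, under (\hyperref[assump]{A5}), also \eqref{H000}), with $q_0=\bar y_0$, $x_0=(\bar y,\bar u)$, $\lambda_0=(\bar p,\bar p_1)$ and the spaces of \eqref{eq3.4}, shrinking $\wti{\delta}_2\in(0,\delta_2]$ as needed so all smallness requirements below hold uniformly for $y_0\in B_Y(\wti{\delta}_2)$. Most of these are immediate. For \eqref{H0}: $x_0$ exists as a (local, in fact global) solution by Lemma \ref{WPP}; $f$ in \eqref{eq:fyu} is a bounded nonnegative quadratic form on $W_\infty\times U$ (using $W_\infty\hookrightarrow L^2(I;Y)$), hence $C^\infty$; and $e$ in \eqref{def_eyu} is a bounded affine map plus $-(\calF(\cdot),0)$, so by (\hyperref[assump]{A3}) it is continuous, twice continuously differentiable in $x$ with $e'$ Lipschitz near $(x_0,q_0)$ and $e''=-(\calF'',0)$ continuous, while its $q$-dependence is affine and confined to the second coordinate. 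Condition \eqref{H1} is the first assertion of Proposition \ref{prop:adj}. For \eqref{H3}: $e(y,u,y_1)-e(y,u,y_2)=(0,-(y_1-y_2))$ gives \eqref{lip_e} with $\nu=1$, and $e'(y,u,y_0)$ is independent of $y_0$, so $\ul{e'(y,u,y_1)^*}-\ul{e'(y,u,y_2)^*}=0$ and \eqref{lip_de} is trivial. Condition \eqref{H00} is structural and follows from the computations of Section \ref{Sec-abs_su}: $\ul{f'}(y,u)$ is represented by $(y,\alpha u)\in\ul{X^*}$ for $(y,u)\in\ul X$, and by the discussion around \eqref{vEtd_1} the restricted adjoint acts by $\ul{e'(y,u,y_0)^*}(p,p(0))=(-p_t-\calA^* p-\calF'(y)^* p,\,-B^* p)$, whose second coordinate lies in $U\cap C(\bar I;\calU)$ because $p\in W_\infty\hookrightarrow C(\bar I;Y)$ and $B^*\in\calL(Y,\calU)$, and whose first coordinate lies in $L^2(I;V^*)$ by (\hyperref[assump]{A1}) and (\hyperref[assump]{A3}). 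If (\hyperref[assump]{A5}) holds, \eqref{H000} follows from $\ul{\calL'}(y,u,y_0,p,p(0))=(y-p_t-\calA^* p-\calF'(y)^* p,\ \alpha u-B^* p)$: differentiability in $x$ and continuity of $(\ul{\calL'})'$ at $(x_0,q_0,\lambda_0)$ reduce, via Remark \ref{rem3.2}, to differentiability and continuity of $y\mapsto\ul{\calF'(y)^*}$ in the relevant operator topology, which (\hyperref[assump]{A3}) and (\hyperref[assump]{A5}) supply.

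\nin The only real obstacle is \eqref{H2}, coercivity of $A=\calL''(x_0,q_0,\lambda_0)$ on $\ker E$ in the norm of $X=W_\infty\times U$. Differentiating $\calL$ twice gives, for $(v,w)\in\ker E$ (so $v(0)=0$ and $v_t=\calA v+\calF'(\bar y)v+Bw$),
$$\ip{A(v,w)}{(v,w)}_{X^*,X}=\norm{v}^2_{L^2(I;Y)}+\alpha\norm{w}^2_U-\ip{\bar p}{\calF''(\bar y)(v,v)}_{L^2(I;V),L^2(I;V^*)},$$
so the argument proceeds in three steps: (a) $\norm{\bar p}_{L^2(I;V)}\le C\norm{y_0}_Y$, to be obtained \emph{without} (\hyperref[assump]{A5}); (b) a norm comparison $\norm{v}_{W_\infty}\le C\big(\norm{v}_{L^2(I;Y)}+\norm{w}_U\big)$ on $\ker E$; and (c) absorption of the indefinite cross term. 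For (a), I would test the weak adjoint identity \eqref{adjP-1} with $v=z$, where $z\in W_\infty$ solves $z_t-(\calA-BK)z-\calF'(\bar y)z=-r-B\bar u$, $z(0)=0$; this is solvable by Lemma \ref{lem-pur} once $\norm{\calF'(\bar y)}_{\calL(W_\infty,L^2(I;V^*))}<1/M_K$, which holds for $\wti{\delta}_2$ small by Lemma \ref{WPP}, \eqref{eq:fprime} and (\hyperref[assump]{A3}), exactly as in the proof of Proposition \ref{prop:adj}. Combining the resulting identity with the variational inequality \eqref{adjP-3} (using the admissible comparison control $-Kz$) and the a priori bounds \eqref{eq:op_est}, then taking the supremum over $\norm{r}_{L^2(I;V^*)}\le\tilde{\delta}$ and absorbing the term proportional to $\norm{y_0}_Y\norm{\bar p}_{L^2(I;V)}$, yields (a) with a constant uniform in $y_0\in B_Y(\wti{\delta}_2)$. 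This step replaces the adjoint-PDE argument of Proposition \ref{prop:adj}, which needed (\hyperref[assump]{A5}).

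\nin For (b), since $v\in L^2(I;Y)$ with $v(0)=0$ and $v_t=(\calA-\rho I)v+\big(\rho v+\calF'(\bar y)v+Bw\big)$, Consequence \ref{cons:2} gives $\norm{v}_{W_\infty}\le M_\rho\big(\rho c\,\norm{v}_{L^2(I;Y)}+\norm{\calF'(\bar y)}\,\norm{v}_{W_\infty}+c\norm{B}\,\norm{w}_U\big)$ with $c$ the embedding constant of $Y$ into $V^*$, and a further shrinking of $\wti{\delta}_2$ so that $\norm{\calF'(\bar y)}\le 1/(2M_\rho)$ lets us absorb the middle term, proving (b) and hence $\norm{(v,w)}_X^2\le C'\big(\norm{v}^2_{L^2(I;Y)}+\norm{w}^2_U\big)$ on $\ker E$. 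Using then that $\calF''(\bar y)$ is bounded on $\{\norm{y}_{W_\infty}\le M\wti{\delta}_2\}$ by (\hyperref[assump]{A3}), steps (a) and (b) bound the cross term by $C\norm{y_0}_Y\norm{(v,w)}_X^2$, so for $\wti{\delta}_2$ small enough $\ip{A(v,w)}{(v,w)}_{X^*,X}\ge\kappa\norm{(v,w)}_X^2$ with $\kappa>0$ independent of $y_0\in B_Y(\wti{\delta}_2)$, which is \eqref{H2}. The delicate feature throughout is that the infinite-horizon estimates (Consequence \ref{cons:2}, Lemma \ref{lem-pur}) cost a factor $\norm{\calF'(\bar y)}$, controllable only because $\bar y$, hence $\bar p$ and $\calF'(\bar y)$, is small when $y_0$ is; balancing these smallness requirements against the fixed constants $M_K$, $M_\rho$, $\norm{B}$, $\norm{K}$ is the nontrivial bookkeeping.
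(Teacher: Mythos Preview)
Your proof is correct and follows the same outline as the paper's: \eqref{H0}, \eqref{H1}, \eqref{H00}, \eqref{H3} (and \eqref{H000} under (\hyperref[assump]{A5})) are dispatched exactly as in the paper, and the verification of \eqref{H2} proceeds via the same three ingredients --- a bound on $\norm{\bar p}_{L^2(I;V)}$, the $W_\infty$--$L^2$ norm comparison on $\ker E$ obtained from the $\rho$-shifted equation, and absorption of the $\calF''$ cross term.

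The one substantive difference is how you obtain the adjoint estimate (your step (a)). The paper simply cites \eqref{est:p} from Proposition~\ref{prop:adj}, whose derivation there passed through an integration by parts relying on $\bar p\in W_\infty$, i.e.\ on (\hyperref[assump]{A5}). You instead read off the same identity directly from the weak optimality condition \eqref{adjP-1} tested with $v=z$, $z(0)=0$, which already encodes the integration by parts and requires only $\bar p\in L^2(I;V)$. This is a genuine (if small) improvement: it makes the verification of \eqref{H2} self-contained under (\hyperref[assump]{A1})--(\hyperref[assump]{A4}), matching the lemma's stated hypotheses, whereas the paper's cross-reference tacitly borrows (\hyperref[assump]{A5}). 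The remark about ``absorbing the term proportional to $\norm{y_0}_Y\norm{\bar p}_{L^2(I;V)}$'' is in fact unnecessary --- once you use \eqref{adjP-3} with the admissible comparison control $w=-Kz$ you get $\langle\bar p,r\rangle\le(\bar y,z)_{L^2(I;Y)}+\alpha\langle\bar u,w-\bar u\rangle_U$, which is already free of $\bar p$ --- but this overcaution does no harm.
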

	\begin{proof} Throughout $y_0\! \in\! B_Y(\delta_2)$, $(\bar y, \bar u)$ denotes a local solution to (\hyperref[SLP]{$\calP$}), and $(p,p_1) \in L^2(I;V^*) \times Y$ the associated Lagrange multiplier.
		\begin{enumerate}[(i)]
			\item Verification of \eqref{H0}: The initial condition $y_0$ is our nominal reference parameter $q$. Lemma \ref{WPP} guarantees the existence of a local solution  $(\bar{y}, \bar{u}) \sim x_0$ to (\hyperref[SLP]{$\calP$})$\sim $ (\hyperref[Pq]{$P_{q_0}$}). Clearly $f$ defined in \eqref{eq:fyu} satisfies the required regularity assumptions.  Moreover  $e$ satisfies the regularity assumptions as a consequence of (\hyperref[assump]{A3}).
			\item Verification of \eqref{H1}: Proposition \ref{prop:adj} implies that $ (\bar{y}, \bar{u})$ is a regular point.
			\item Verification of \eqref{H2}: The second derivative of $e$ is given by
			\begin{equation}\label{sec_d_e}
			e''(\bar{y}, \bar{u}, y_0)((v_1,w_1), (v_2,w_2)) = \bpm \calF''(\bar{y}) (v_1, v_2) \\ 0 \epm, \quad \forall \ v_1, v_2 \in W_{\infty}, \ \forall w_1, w_2 \in U.
			\end{equation}
			\nin For the second derivative of $\calL$ w.r.t. $(y,u)$, we find
			\begin{multline}\label{sec_d_L.2}
			\calL''(\bar{y},\bar{u},y_0, \bar p, \bar p_1)((v_1,w_1),(v_2,w_2)) = \int_{0}^{\infty} \ipp{v_1}{v_2}_Y dt + \alpha \int_{0}^{\infty} \ipp{w_1}{w_2}_Y dt \\ + \int_{0}^{\infty} \ip{\bar p}{\calF''(\bar{y})(v_1, v_2)}_{V,V^*} dt.
			\end{multline}
			\nin By (\hyperref[assump]{A3}) for $\calF ''$ and Lemma \ref{WPP} , there exists $\wti{M}_1$ such that
			\begin{equation}
			\int_{0}^{\infty} \ip{\bar p}{\calF''(\bar{y})(v, v)}_{V,V^*} dt \leq \wti{M}_1 \| \bar p \|_{L^2(I;V)} \norm{v}^2_{W_{\infty}}, \quad \forall \ v\in W_\infty,
			\end{equation}
			for each solution $(\bar{y}, \bar{u})$ of (\hyperref[SLP]{$\calP$}) with $y_0 \in B_Y(\delta_2)$. Then we obtain
			\begin{equation}
			\calL''(\bar{y}, \bar{u}, y_0, \bar p, \bar p_1)((v,w),(v,w)) \geq \int_{0}^{\infty} \norm{v}_Y^2 dt + \alpha \int_{0}^{\infty} \norm{w}^2_{\calU} dt - \wti{M}_1 \| \bar p \|_{L^2(I;V)} \norm{v}^2_{W_{\infty}}.
			\end{equation}
			Now let $0\neq (v,w) \in \ker E \subset W_{\infty} \times \Uad$, where $E$ as defined in \eqref{def_deyu} is evaluated at $(\bar y,\bar u)$. Then,
			\begin{equation*}
			v_t  - \calA v - \calF'(\bar{y})v - Bw = 0, \quad v(0) = 0.
			\end{equation*}
			Next choose $\rho > 0$, such that the semigroup generated by $(\calA - \rho I)$ is  exponentially stable. This is possible due to (\hyperref[assump]{A1}). We equivalently write the system in the previous equation as,
			\begin{equation*}
			v_t  - (\calA - \rho I) v - \calF'(\bar{y})v - \rho v - Bw = 0, \quad v(0) = 0.
			\end{equation*}
			\nin Now, we invoke Lemma \ref{lem-pur} with ${\cal A}-BK$ replaced by $\calA - \rho I$, $\ds G = \calF'(\bar{y})$, and $f(t)=\rho v(t) + B w(t)$, and the role of the constant $M_K$ will now be assumed by a parameter $M_\rho$.  By selecting $\wti{\delta}_2 \in (0, \delta_2]$ such that $\norm{\bar{y}}_{W_{\infty}}$ sufficiently small, we can guarantee that $\ds \norm{\calF'(\bar{y})}_{\calL(W_{\infty}; L^2(I;V^*))} \leq \rfrac{1}{2 M_\rho}$, see \eqref{eq:op_est} and \eqref{eq:fprime} in Remark \ref{rmk-fprime}. Then the following estimate holds,
			\begin{equation*}
			\norm{v}_{W_{\infty}} \leq  2{M_\rho} \norm{v + Bw}_{L^2(I; V^*)}.
			\end{equation*}
			This implies that
			\begin{equation}\label{ker_vw}
			\norm{v}^2_{W_{\infty}} \leq  \wti{M}_2  ( \norm{v}^2_{L^2(I; Y)} + \norm{w}^2_{L^2(I; Y)} ).
			\end{equation}
			for a constant  $\wti{M}_2$ depending on $M_\rho, \|B\|$, and the embedding of $Y$ into $V^*$. 		These preliminaries allow the following lower bound on  $\calL''$:
			\begin{align}
			\calL''(\bar{y}, \bar{u}, y_0, \bar p, \bar p_1)((v,w),(v,w)) &\geq \int_{0}^{\infty} \norm{v}_Y^2 dt + \alpha \int_{0}^{\infty} \norm{w}^2_Y dt - \wti{M}_1 \|\bar p\|_{L^2(I;V)} \norm{v}^2_{W_{\infty}}, \nonumber \\
			\text{by ~ \eqref{ker_vw} ~} \geq \int_{0}^{\infty} \norm{v}_Y^2 + \alpha & \int_{0}^{\infty} \norm{w}^2_Y - \wti{M}_1  \wti{M}_2 \| \bar p \|_{L^2(I;V)} \left[ \norm{v}^2_{L^2(I; Y)} + \norm{w}^2_{L^2(I; Y)} \right], \nonumber\\
			= \left( 1 - \wti{M}_1 \wti{M}_2 \right. & \left. \| \bar p \|_{L^2(I;V)}  \right) \norm{v}^2_{L^2(I;Y)} + \left( \alpha - \wti{M}_1  \wti{M}_2 \| \bar p \|_{L^2(I;V)} \right) \norm{w}^2_{L^2(I;Y)}, \nonumber\\
			& \geq \tilde{\gamma} \left[ \norm{v}^2_{L^2(I;Y)} +  \norm{w}^2_{L^2(I;Y)} \right], \label{lb_L}
			\end{align}
			where $\ds \tilde{\gamma} = \min \left \{ 1 - \wti{M}_1 \wti{M}_2 \| \bar p \|_{L^2(I;V)} , \alpha - \wti{M}_1 \wti{M}_2 \| \bar p \|_{L^2(I;V)} \right \}$. By possible further reduction of $\tilde \delta_2$ it can be guaranteed that $\tilde \gamma >0$, see \eqref{est:p}. Then by \eqref{ker_vw}, we obtain,
			\begin{align*}
			\calL''(\bar{y}, \bar{u}, y_0, \bar p, \bar p_1)((v,w),(v,w)) &\ge \frac{\tilde{\gamma}}{2} \left[ \norm{v}^2_{L^2(I;Y)} +  \norm{w}^2_{L^2(I;Y)} \right] + \frac{\tilde{\gamma}}{2 \wti{M}_2} \norm{v}^2_{W_{\infty}},\\
			&\ge \frac{\tilde{\gamma}}{2 \wti{M}_2} \norm{v}^2_{W_{\infty}} + \frac{\tilde{\gamma}}{2} \norm{w}^2_{L^2(I;Y)}.
			\end{align*}
			\nin By selecting $\ds \bar{\gamma} = \min \left \{ \frac{\tilde{\gamma}}{2 \wti{M}_2}, \frac{\tilde{\gamma}}{2} \right \}$, we obtain the positive definiteness of $\calL''$, i.e.
			\begin{equation}
			\calL''(\bar{y}, \bar{u}, y_0, \bar p, \bar p_1)((v,w),(v,w)) \ge \bar{\gamma} \norm{(v,w)}^2_{W_{\infty} \times U}, \ y_0 \in B_Y(\wti{\delta}_2), \ (v,w) \in \text{ker } E.
			\end{equation}			
			Thus \eqref{H2} is satisfied.
			
			\item Verification of \eqref{H00}: It can easily be checked that $f'(y,u)$ can be extended to an element in $\ul{X^*} = L^2(I;V^*) \times (U \cap C(\bar I;\calU))$ for each $(y,u) \in X = W_\infty \times U$. We refer to Remark \ref{rem3.2} to show that the restriction of $e'(y,u,y_0)^*$ to $\ul{W^*}$ satisfies $\ul{e'(y,u,y_0)^*} \in \calL(\ul{W^*}, \ul{X^*}) = \calL(W_\infty \times Y,L^2(I;V^*)\times (U \cap C(\bar I;\calU)))$.
			
			\item Verification of \eqref{H3}: This is trivially satisfied.
			
			\nin Thus we have proved that assumptions (\hyperref[assump]{A1})-(\hyperref[assump]{A4}) imply \eqref{H0}-\eqref{H00}, and \eqref{H3} for all $\ds y_0 \in B_Y(\wti{\delta}_2)$.	
							
			\item Verification of \eqref{H000}: Here we use (\hyperref[assump]{A5}) and have $(p,p_1) = (p,p(0)) \in \wti W_{\infty}$. Observe that $\ul{\calL'}:W_{\infty} \times (U \cap C(\bar I;\calU)) \times Y \times W_\infty \to L^2(I;V^*) \times (U \cap C(\bar I;\calU))$ evaluated at $(\bar y, \bar u, y_0, p)$ is given by
			\begin{equation*}
				\ul{\calL'}(\bar y, \bar u, y_0, \bar p) = \bpm \bar y + \ul{e'( \bar y, \bar u, y_0)^*} \bar p \\ \alpha \bar u - B^* \bar p \epm = \bpm \bar y - \bar p_t - \calA^* \bar p - \calF'(\bar y)^* \bar p \\ \alpha \bar u - B^* \bar p \epm.
			\end{equation*}
			Further for  $(v.w)\in W_\infty \times U$ we have
			\begin{equation*}
				(\ul{\calL'})'(\bar y, \bar u, y_0, \bar p) (v,w) = \bpm v - [\calF'(\bar y)^*]'(\bar p,v) \\ \alpha w \epm \in L^2(I;V^*)\times (U \cap C(\bar I;\calU)).
			\end{equation*}	
		By (\hyperref[assump]{A3}) and Remark \ref{rem3.2} we have that $\ul{\calL'}$ and  $(\ul{\calL'})'$ are continuous as mappings from $W_{\infty} \times (U \cap C(\bar I;\calU)) \times Y \times W_\infty$ to $L^2(I;V^*) \times (U \cap C(\bar I;\calU))$, respectively to $\calL (W_\infty \times (U \cap C(\bar I;\calU)); L^2(I;V^*) \times (U \cap C(\bar I;\calU)))$.
	\end{enumerate}	
	\nin This proves the lemma.
	\end{proof}	

	\begin{rmk}\label{rmk-Vhat}
		Let us summarize our findings so far. There exists $\tilde \delta_2$ such that for each $y_0 \in B_Y(\tilde \delta_2)$ problem (\hyperref[SLP]{$\calP$}) posesses a solution $(\bar y, \bar u) \in W_{\infty} \times (U \cap C(\bar I;\calU))$, with an adjoint $\bar p \in \wti W_{\infty}$. Further (\hyperref[assump]{A1})-(\hyperref[assump]{A5}) imply \eqref{H0}-\eqref{H3} for (\hyperref[SLP]{$\calP$}) with $y_0 \in B_Y(\tilde \delta_2)$. As a consequence for each $y_0 \in B_Y(\tilde \delta_2)$ and each associated local solution $(\bar y, \bar u)$ there exists a neighborhood $\hat V$ of the origin in $\calY := L^2(I;V^*) \times (U \cap C(\bar I;\calU)) \times L^2(I;V^*) \times Y$ such that for each $\bs \beta \in \hat V$ there exists a unique solution $\ds \left(y_{(\bs \beta)}, u_{(\bs \beta)}, p_{(\bs \beta)}, {p_1}_{(\bs \beta)}  \right) \in W_{\infty} \times U \times L^2(I;V) \times Y$ to $\calT(y,u,p,p_1) = \bs \beta$, see step (ii) of the proof of Theorem  \ref{thm:lip_con} and $\ds \left( y_{(\bs \beta)}, u_{(\bs \beta)} \right)$ Lipschitz continuous w.r.t. $\bs \beta$, see \eqref{eq:aux11} in the proof of Theorem \ref{thm:lip_con}. --  To verify the remaining assumption \eqref{H4} we need to argue that $\left( u_{(\bs \beta)}, p_{(\bs \beta)} \right) \in (U \cap C(\bar I;\calU)) \times W_{\infty}$ and that $\ds \bs \beta \mapsto\left( y_{(\bs \beta)}, u_{(\bs \beta)}, p_{(\bs \beta)} \right)$ is Lipschitz continuous from $\hat V\subset \mathcal{Y}$  to $ W_{\infty} \times (U \cap C(\bar I;\calU)) \times W_{\infty}$ .
	\end{rmk}

\begin{rmk}\label{rmk-small}
Here we remark on the smallness assumption on $y_0$ expressed by $\delta_2$, respectively $\tilde \delta_2$. The condition $y_0 \in B_Y(\delta_2)$ guarantees the well-posedness of (\hyperref[SLP]{$\calP$}), existence and boundedness of adjoint states as expressed in Proposition \ref{prop:adj}. The additional condition $y_0 \in B_Y(\tilde\delta_2)$ implies that the second order optimality condition \eqref{H2} is satisfied, for each local solution associated to an initial condition $y_0\in B_Y(\delta_2)$. In the following we  formulate the results for all $y_0\in B_Y(\tilde\delta_2)$. Alternatively we could narrow down the claims to neighborhoods of single local solutions $(\bar y, \bar u)$ with  $y_0\in B_Y(\delta_2)$ and additionally assuming that the second order condition is satisfies at $(\bar y, \bar u)$. Concerning the second order condition itself, in some publications, see e.g. \cite{Gri:2004}, it is required to hold only for elements $x=(y,u) \in  \text{ker} E$ and $u = u_1-u_2$, with $u_1, u_2$  in $U_{ad}$. By a scaling argument it can easily be seen that this condition is equivalent to the one we use.

\end{rmk}

	\subsection{Verification of \eqref{H4} and Lipschitz stability of the linearized problem.}
	Throughout the remainder, we assume that  (\hyperref[assump]{A1})-(\hyperref[assump]{A5}) are satisfied and that $y_0 \in B_Y(\tilde \delta_2)$ so that Proposition \ref{prop:adj} and Lemma \ref{le4.7} are applicable. In the following, the triple $(y,u,p)$ refers to the solution $\calT(y,u,p,p_1) = \bs \beta$. Throughout without loss of generality, we also assume that $\hat V$ is bounded.
	
	
	\begin{lemma}\label{lem:beta_p}
		Let assumptions (\hyperref[assump]{A}) hold and let $(\bar y, \bar u)$, and $\bar p$ denote a local solution and associated adjoint state to (\hyperref[SLP]{$\calP$}) corresponding to an initial datum $y_0 \in B_Y(\tilde \delta_2)$. Then, possibly after further reduction of $\hat V$, the mapping $\bs \beta \mapsto p_{(\bs \beta)}$ is continuous from $\hat V$ to $W_{\infty}$.
	\end{lemma}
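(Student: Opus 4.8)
The plan is to deduce the continuity of $\bs\beta \mapsto p_{(\bs\beta)}$ from the continuity of $\bs\beta \mapsto y_{(\bs\beta)}$ — which is already available, with a H\"older modulus, from step (ii) of the proof of Theorem~\ref{thm:lip_con}, see also Remark~\ref{rmk-Vhat} — by means of an a~priori estimate for the linearized backward adjoint equation, obtained via a duality argument of the type used in the proof of Proposition~\ref{prop:adj}.

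First I would record the structure of the solutions. Since $\hat V \subset L^2(I;V^*)\times(U\cap C(\bar I;\calU))\times L^2(I;V^*)\times Y$, the first component $\beta_1$ of $\bs\beta$ lies in $L^2(I;V^*)$; hence, exactly as in the discussion following \eqref{vEtd_1} — this is where (\hyperref[assump]{A5}) enters — the equation $\calT(y_{(\bs\beta)},u_{(\bs\beta)},p_{(\bs\beta)},{p_1}_{(\bs\beta)}) = \bs\beta$ forces $p_{(\bs\beta)}\in W_{\infty}$, ${p_1}_{(\bs\beta)} = p_{(\bs\beta)}(0)$, $\lim_{t\to\infty}p_{(\bs\beta)}(t)=0$, and
\begin{equation*}
-\partial_t p_{(\bs\beta)} - \calA^* p_{(\bs\beta)} - \calF'(\bar y)^* p_{(\bs\beta)} = \beta_1 - y_{(\bs\beta)} + R(y_{(\bs\beta)} - \bar y)\quad\text{in } L^2(I;V^*),
\end{equation*}
where $R := [\calF'(\bar y)^*\bar p]' \in \calL(W_{\infty},L^2(I;V^*))$ has norm controlled by $\|\bar p\|_{W_{\infty}}$ (Remark~\ref{rem3.2}). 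In addition $u_{(\bs\beta)}$ satisfies the variational inequality $\langle\alpha u_{(\bs\beta)} - B^* p_{(\bs\beta)} - \beta_2,\, v - u_{(\bs\beta)}\rangle_U \ge 0$ for all $v\in\Uad$, and $y_{(\bs\beta)}$ solves $\partial_t y_{(\bs\beta)} - \calA y_{(\bs\beta)} - \calF'(\bar y)y_{(\bs\beta)} - B u_{(\bs\beta)} = \calF(\bar y) - \calF'(\bar y)\bar y + \beta_3$ with $y_{(\bs\beta)}(0) = \bar y_0 + \beta_4$. For $\bs\beta,\hat{\bs\beta}\in\hat V$ I would subtract the respective relations and set $\delta y := y_{(\bs\beta)} - y_{(\hat{\bs\beta})}$, with $\delta u,\delta p$ defined analogously: then $\delta p$ solves the linear backward equation $-\partial_t\delta p - \calA^*\delta p - \calF'(\bar y)^*\delta p = \delta g$, $\delta p(\infty)=0$, with $\delta g := (\beta_1-\hat\beta_1) - \delta y + R\,\delta y$; the difference $\delta y$ solves the forward equation $\partial_t\delta y - \calA\delta y - \calF'(\bar y)\delta y - B\,\delta u = \beta_3 - \hat\beta_3$, $\delta y(0) = \beta_4 - \hat\beta_4$; and the monotonicity of $\partial\mb{I}_{\Uad}$ applied to the two variational inequalities gives $\langle B^*\delta p,\delta u\rangle_U \ge \alpha\|\delta u\|_U^2 - \langle\beta_2-\hat\beta_2,\delta u\rangle_U$.

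The core of the argument is then an a~priori bound of the form $\|\delta p\|_{W_{\infty}} \le C\big(\|\beta_1-\hat\beta_1\|_{L^2(I;V^*)} + \|\delta y\|_{W_{\infty}} + \|\bs\beta-\hat{\bs\beta}\|_{\calY}\big)$, with $C$ independent of $\bs\beta,\hat{\bs\beta}$ in a sufficiently small bounded neighbourhood. Since the backward equation alone is not well posed on $W_{\infty}$ — the operator $\calA$ need not generate an exponentially stable semigroup, which is the characteristic infinite-horizon obstruction — this estimate must be extracted from the coupling of $\delta p$ with the state equation and the control optimality condition, following the scheme of \eqref{eq:zeq_new}--\eqref{est:p}. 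Concretely: I would test the $\delta y$-equation against $\delta p$, integrate by parts (the terminal term vanishes since $\delta p(\infty)=0$; the $t=0$ term produces $-\langle\delta p(0),\beta_4-\hat\beta_4\rangle_Y$), combine with the backward equation and the last displayed inequality, and — after lifting the inhomogeneous data $\beta_3-\hat\beta_3$, $\beta_4-\hat\beta_4$ through the feedback $-K(\cdot)$ as in Lemma~\ref{lem-pur}, so that the remaining homogeneous part of $(\delta y,\delta u)$ lies in $\ker E$ — invoke the coercivity \eqref{H2} in the $W_{\infty}\times U$ form established in Lemma~\ref{le4.7}, absorbing the quadratic term $\langle R\,\delta y,\delta y\rangle$ by the smallness $\|\bar p\|_{W_{\infty}}\le\tilde M\|y_0\|_Y$ (Proposition~\ref{prop:adj}). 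This yields first a bound on $\|\delta y\|_{W_{\infty}}+\|\delta u\|_U$, then, via the dual estimate on $\|\delta p\|_{L^2(I;V)}$ obtained by pairing $\delta p$ with the auxiliary forward solution $z$ of $z_t - (\calA-BK)z - \calF'(\bar y)z = r$, $z(0)=0$ (well posed in $W_{\infty}$ by Lemma~\ref{lem-pur}, as $\|\calF'(\bar y)\|_{\calL(W_{\infty},L^2(I;V^*))}$ is small by the choice of $\tilde\delta_2$), and finally, through the bootstrap $\|\partial_t\delta p\|_{L^2(I;V^*)} \le \|\calA^*\delta p\|_{L^2(I;V^*)} + \|\calF'(\bar y)^*\delta p\|_{L^2(I;V^*)} + \|\delta g\|_{L^2(I;V^*)}$ — the middle term controlled by $\|\delta p\|_{L^2(I;V)}$ through (\hyperref[assump]{A5}) — the desired $W_{\infty}$-bound on $\delta p$. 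Since $\|\delta g\|_{L^2(I;V^*)} \le \|\beta_1-\hat\beta_1\|_{L^2(I;V^*)} + C\|\delta y\|_{W_{\infty}}$ and $\|\delta y\|_{W_{\infty}}\to0$ as $\hat{\bs\beta}\to\bs\beta$ by step (ii) of Theorem~\ref{thm:lip_con}, we conclude $\|p_{(\bs\beta)} - p_{(\hat{\bs\beta})}\|_{W_{\infty}}\to0$, which is the assertion. I expect the main obstacle to be exactly this a~priori estimate for the backward adjoint equation: the adjoint cannot be integrated backwards from $t=\infty$ in isolation, so the bound has to be wrung out of the forward--backward coupling, and the bookkeeping of the inhomogeneous initial data together with the absorption of $\langle R\,\delta y,\delta y\rangle$ under the smallness of $y_0$ is the delicate part.
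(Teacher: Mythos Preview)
Your overall plan --- deduce continuity of $p_{(\bs\beta)}$ from that of $y_{(\bs\beta)}$ via a duality estimate for the backward equation, then bootstrap from $L^2(I;V)$ to $W_\infty$ --- is the right shape, and your Step~1 (regularity of $p_{(\bs\beta)}$ via (\hyperref[assump]{A5})) matches the paper. But the core estimate does not close as you describe it. When you pair $\delta p$ with the auxiliary solution of $z_t - (\calA-BK)z - \calF'(\bar y)z = r$, $z(0)=0$, integration by parts gives
\[
\langle \delta p, r\rangle_{L^2(I;V),L^2(I;V^*)} \;=\; \langle \delta g, z\rangle_{L^2(I;V^*),L^2(I;V)} + \langle B^*\delta p,\,Kz\rangle_U,
\]
and the second term is the obstruction: $Kz$ is not $\delta u$, so the monotonicity inequality you recorded is of no help, and you are left with $\|\delta p\|_{L^2(I;V)} \le C\big(\|\delta g\|_{L^2(I;V^*)} + \|\delta p\|_{L^2(I;Y)}\big)$, which is circular. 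Neither the forward--backward energy identity you sketch nor the coercivity \eqref{H2} removes this term --- both leave residuals of the form $\langle \delta p(0),\beta_4-\hat\beta_4\rangle$, $\langle\beta_3-\hat\beta_3,\delta p\rangle$, or $\langle B^*\delta p, Kz\rangle$ that involve $\delta p$ itself.

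The paper closes this gap by a compactness step that you omit. It first proves uniform boundedness of $\{p_{(\bs\beta)}:\bs\beta\in\hat V\}$ in $W_\infty$ (using the duality device of Proposition~\ref{prop:adj} with the single variational inequality, not the differenced one). Then for a sequence $\bs\beta_n\to\bs\beta$ it extracts a subsequence with $p_{(\bs\beta_{n_k})}\rightharpoonup\tilde p$ in $W_\infty$ and, by Aubin--Lions, $p_{(\bs\beta_{n_k})}\to\tilde p$ strongly in $L^2(I;Y)$; passing to the limit in the adjoint equation identifies $\tilde p = p_{(\bs\beta)}$. Only then does the duality estimate become useful: with $\|\delta p\|_{L^2(I;Y)}\to 0$ already in hand, the inequality above upgrades to $\|\delta p\|_{L^2(I;V)}\to 0$, and the bootstrap to $W_\infty$ follows. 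The Lipschitz-type estimate you are aiming for is only obtained afterwards, in Proposition~\ref{prop:est_p}, and it crucially uses the continuity from the present lemma to find a time $T$ beyond which the control constraint is inactive, allowing the $B^*\delta p$ term to be converted into $\alpha\,\delta u$ on $[T,\infty)$.
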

	\begin{proof}\ \\
		\nin \uline{Step 1:} For $\bs{\beta} \in \hat V$, with $\hat V$ as in Remark \ref{rmk-Vhat}, let $\ds (y_{(\bs \beta)}, u_{(\bs \beta)}, p_{(\bs \beta)}, {p_1}_{(\bs \beta)} )$ be the solution to $\calT(y,u,p,p_1) = \bs{\beta}$. As a consequence of (\hyperref[assump]{A5}) it is also a solution to $\ul \calT(y,u,p,p(0)) = \bs{\beta}$ with $p_{(\bs \beta)} \in W_{\infty}$. Thus the first two equations of this latter equality can be expressed as
		\begin{subequations}
			\begin{align}
			- \partial_t p_{(\bs \beta)} - \calA^* p_{(\bs \beta)} - \calF'(\bar{y})^*p_{(\bs \beta)} + y_{(\bs \beta)} - [\calF'(\bar{y})^*\bar{p}]'(y_{(\bs \beta)} - \bar{y}) &= \beta_1 \quad \text{ in } L^2(I;V^*), \label{adjTP_1}\\
			\ip{\alpha u_{(\bs \beta)} - B^* p_{(\bs \beta)} - \beta_2}{w - u_{(\bs \beta)}}_U &\geq 0 \quad \text{for all } w \in \Uad. \label{adjTP_2}
			\end{align}
		\end{subequations}
	\nin 
	\nin The above inequality is equivalent to $\ds u_{(\bs \beta)}(t) =  \mathbb{P}_{\mathcal{U}_{ad}} \left[ \frac{1}{\alpha} \left(B^* p_{(\bs{\beta})}(t) + \beta_2(t)\right) \right]$. Since $p_{(\bs \beta)} \in W_{\infty} \subset C(\bar I;Y)$ and $\beta_2 \in C(\bar I; \calU)$, it follows that $u_{(\bs \beta)} \in C(\bar I; \calU)$ for every $\bs \beta \in \hat V$.\\
	
	\nin \uline{Step 2: (Boundedness of $\{ p_{(\bs \beta)}: \bs \beta \in \hat V \}$).} Since $\hat V$ is assumed to be bounded, the discussion in Remark \ref{rmk-Vhat} shows that there exists a constant $M_1 > 0$ such that
	\begin{equation*}
		\norm{y_{(\bs \beta)}}_{W_{\infty}} + \norm{u_{(\bs \beta)}}_U \leq M_1 \quad \text{for all } \bs \beta \in \hat V.
	\end{equation*}
	\nin To argue the boundedness of $p_{(\bs \beta)}$, we use a similar technique as in the proof of Proposition \ref{prop:adj}. With $\tilde \delta$ as in the proof of that Proposition, $\bs \beta \in \hat V$, and $r \in R = \left\{ r \in L^2(I;V^*): \norm{r}_{L^2(I;V^*)} \leq \frac{\tilde \delta}{2} \right\}$ let $z$ denote the solution to
	\begin{equation}\label{eq:z}
	z_t - \calA z - \calF'(\bar{y})z - B \left(w - u_{(\bs \beta)} \right) = -r, \ z(0) = 0, \ w = -Kz.
	\end{equation}
	\nin From the proof of Proposition  \ref{prop:adj}, we know that there exists a constant $\tilde{M}$ such that
	\begin{align}
		\norm{z}_{W_{\infty}} &\leq \tilde{M} \left( \norm{r}_{L^2(I;V^*)} + \norm{B}_{\calL(\calU,Y)} \norm{u_{(\bs{\beta})}}_U \right), \nonumber\\
		&\leq \tilde{M} \left( \norm{r}_{L^2(I;V^*)} + \norm{B}_{\calL(\calU,Y)} \left( \norm{\bar u}_{U} + \norm{u_{(\bs{\beta})} - \bar u}_U \right) \right). \label{kk60}
	\end{align}
	\nin Consequently, we obtain with $M$ from \eqref{eq:op_est} for a.a. $t > 0$
	\begin{align*}
		\norm{w(t)}_{\calU} &\leq \norm{K}_{\calL(Y,\calU)} \norm{z(t)}_Y,\\
		&\leq \norm{K}_{\calL(Y,\calU)} \norm{\calI} \tilde{M} \left( \norm{r}_{L^2(I;V^*)} + \norm{B}_{\calL(\calU,Y)} \left( M \norm{y_0}_{Y} + \norm{u_{(\bs{\beta})} - \bar u}_U \right) \right),\\
		&\leq \norm{K}_{\calL(Y,\calU)} \norm{\calI} \tilde{M} \left( \frac{\tilde \delta}{2} + \norm{B}_{\calL(\calU,Y)} \left( M \tilde \delta_2 + \norm{u_{(\bs{\beta})} - \bar u}_U \right) \right),\\
		&\leq \eta + \norm{K}_{\calL(Y,\calU)} \norm{\calI} \tilde{M} \norm{B}_{\calL(\calU,Y)} \norm{u_{(\bs{\beta})} - \bar u}_U.
	\end{align*}
	\nin Due to the continuity of $\bs \beta \to u_{(\bs \beta)} \in U$, we obtain $\norm{w}_{L^{\infty}(I;Y)} \leq \eta$ possibly after further reduction of $\hat V$. Simultaneously, let us reduce $\hat V$ such that $\ds \norm{\frac{1}{\alpha} \beta_2}_{C(\bar I; \calU)} \leq \frac{\eta}{2}$ for all $\bs \beta \in \hat V$. Thus $w$ is feasible. Moreover we have that  $\norm{z}_{W_{\infty}} \leq C_1$ for a constant independently of $r \in R$ and $\bs \beta \in \hat V$. Due to \eqref{adjTP_1} and \eqref{adjTP_2}, we have
	\begin{align}\label{eq:p_r}
	\ip{p_{(\bs{\beta})}}{r}_{L^2(I;V), L^2(I;V^*)} &= \ip{p_{(\bs{\beta})}}{-z_t + \calA z + \calF'(\bar{y})z}_{L^2(I;V), L^2(I;V^*)} + \ip{B^*p_{(\bs{\beta})}}{w - u_{(\bs{\beta})}}_U, \nonumber\\
	\leq \ip{y_{(\bs{\beta})} - [\calF'(\bar{y})^*\bar{p}]'(y_{(\beta)} &- \bar{y}) - \beta_1}{z}_{L^2(I;V^*), L^2(I;V)} + \ip{\alpha u_{(\bs{\beta})} - \beta_2}{w - u_{(\bs{\beta})}}_{U},
	\end{align}
	where we also used the feasibility of $w \in \Uad$. Consequently
	\begin{multline*}
	\ip{p_{(\bs{\beta})}}{r}_{L^2(I;V), L^2(I;V^*)} \leq \norm{z}_{L^2(I;V)} \left( \norm{y_{(\bs{\beta})}}_{L^2(I;V^*)} + \norm{\beta_1}_{L^2(I;V^*)} + \norm{[\calF'(\bar{y})^*\bar{p}]'(y_{(\bs{\beta})} - \bar{y})}_{L^2(I;V^*)} \right)\\ + \left( \alpha  \norm{u_{(\bs{\beta})}}_{U} + \norm{\beta_2}_U \right)\norm{w - u_{(\bs{\beta})}}_U.
	\end{multline*}	
	\nin The right hand side is uniformly bounded for $\bs \beta$ in the bounded set $\hat V$ and w.r.t. $r \in R$. Hence taking the supremum w.r.t. $r \in R$ we verified that $\ds \left\{ \norm{p_{(\bs \beta)}}_{L^2(I;V^*)}: \bs \beta \in \hat V \right\}$ is bounded. Boundedness of $\ds \left\{ \norm{p_{(\bs \beta)}}_{W_{\infty}}: \bs \beta \in \hat V \right\}$ follows from \eqref{adjTP_1}.\\
	
	\nin \uline{Step 3: (Continuity of $p_{(\bs \beta)}$ in $W_{\infty}$).} Let $\{ \bs \beta_n \}$ be a convergent sequence in $\hat V$ with limit $\bs \beta$. Since $\ds \left\{ \norm{p_{(\bs \beta_n)}}_{W_{\infty}}: n \in \BN \right\}$ is bounded, there exists a subsequence $\{ \bs \beta_{n_k} \}$ such that $\ds p_{\left(\bs \beta_{n_k}\right)} \rightharpoonup \tilde{p}$ weakly in $W_{\infty}$ and strongly $L^2(0,T;Y)$ for every $T \in (0,\infty)$, see e.g. \cite[Satz 8.1.12, pg 213]{EE:2004}. Passing to the limit in the variational form of
	\begin{equation*}
		- \partial_t p_{(\bs \beta_{n_k} )}  - \calA^* p_{(\bs \beta_{n_k}) } - \calF'(\bar{y})^*p_{(\bs \beta_{n_k}) } + y_{(\bs \beta_{n_k}) } - [\calF'(\bar{y})^*\bar{p}]' \left(y_{(\bs \beta_{n_k} )} - \bar{y} \right) = \beta_{n_k},_1,
	\end{equation*}
	\nin we obtain
	\begin{equation}
	- \partial_t \tilde{p} - \calA^* \tilde{p} - \calF'(\bar{y})^*\tilde{p} + y_{(\bs \beta)} - [\calF'(\bar{y})^*\bar{p}]'\left( y_{(\bs \beta)} - \bar{y} \right) =  \beta_1.
	\end{equation}
	\nin Since the solution to this equation is unique, we have $\ds p_{(\bs \beta_n)} \rightharpoonup p_{(\bs \beta)}$ weakly in $W_{\infty}$. To obtain strong convergence, we set $\delta \bs \beta = \bs \beta_n - \bs \beta, \ \delta p = p_{(\bs \beta_n)} - p_{(\bs \beta)},$ and $\delta y = y_{(\bs \beta_n)} - y_{(\bs \beta)}$. Since $\ds p_{(\bs \beta)} \in W_{\infty}$ we have that $\ds \lim_{t \rightarrow \infty} p_{(\bs \beta)}(t) = 0$ in $Y$. Hence there exists $\hat T$ such that $\ds \frac{1}{\alpha} \norm{B^* p_{(\bs \beta)} (t) }_{\calU} \leq \frac{\eta}{4}$ for all $t \geq \hat T$, and by the choice of $\hat V$, we also have that
	\begin{equation*}
		\norm{u_{(\bs{\beta})}(t)}_{\calU} = \norm{ \mathbb{P}_{\mathcal{U}_{ad}} \left[ \frac{1}{\alpha} \left(B^* p_{(\bs{\beta})}(t) + \beta_2(t)\right) \right]}_{\calU} = \frac{1}{\alpha} \norm{B^* p_{(\bs{\beta})}(t)	+ \beta_2(t)}_{\calU} \leq \frac{3 \eta}{4},
	\end{equation*}
	\nin i.e. the constraint is inactive for $t \geq \hat T$. Let us estimate for $z \in W_{\infty}$,
	\begin{align*}
		\langle \delta p,  B( Kz - u_{( \bs \beta_{n_k})})&\rangle_{L^2(I;Y)} \leq \int_{0}^{\hat T} \ip{p_{(\bs \beta_{n_k})}(t) - p_{(\bs \beta)}(t)}{B( Kz(t) - u_{( \bs \beta_{n_k} )}(t) )}_Y dt \\
		 & \hspace{1.3cm} + \int_{\hat T}^{\infty} \ip{B^*(p_{(\bs \beta_{n_k})}(t) - p_{(\bs \beta)}(t))}{Kz(t)- u_{( \bs \beta_{n_k} )}(t) }_{\mathcal{U}}  \,  dt,\\
		 &\leq \int_{0}^{\hat T} \norm{B^*(p_{(\bs \beta_{n_k})}(t) - p_{(\bs \beta)}(t))}_Y\norm{Kz(t) - u_{( \bs \beta_{n_k} )}(t) }_{\mathcal{U}}dt\\
		 &\hspace{1cm} + \int_{\hat T}^{\infty} \ip{(\alpha u_{(\bs \beta_{n_k})}(t) - \beta_{n_k,2}(t)) - (\alpha u_{(\bs \beta)}(t) - \beta_2(t))}{ Kz(t)  - u_{(\bs \beta_{n_k})}(t)}_{\mathcal{U}}dt,\\
		 &\leq  \left( \norm{B}_{\calL(\calU,Y)} \norm{p_{(\bs \beta_{n_k})} - p_{(\bs \beta)}}_{L^2(0,\hat T; Y)} + \alpha \norm{u_{(\bs \beta_{n_k})} - u_{(\bs \beta)}}_U \right.\\
		 &\hspace{1.3cm} \left. + \norm{\beta_{n_k,2} - \beta_2}_U \right) \left( \norm{K}_{\calL(Y,\calU)} \norm{z}_{W_{\infty}} + \norm{u_{( \bs \beta_{n_k} )}}_U \right).
	\end{align*}
	\nin Let $R_1 = \left\{ r \in L^2(I;V^*): \norm{r}_{L^2(I;V^*)} \leq 1 \right\}$, and denote the solution to \eqref{eq:z} by $z = z_{(\bs \beta)}$ for $\bs \beta \in \hat V$. From the estimates in \eqref{kk60} there exists $M_2$ such that $\ds \norm{z_{(\bs \beta)}}_{W_{\infty}} \leq M_2$ for all $\bs \beta \in \hat V$, and $r \in R_1$.\\
	
	 \nin From \eqref{adjTP_1} we derive that
	\begin{equation}\label{eq:del_p}
	- \partial_t (\delta p) - \calA^* (\delta p) - \calF'(\bar{y})^*(\delta p) + (I - [\calF'(\bar{y})^*\bar{p})]')(\delta y) = \delta \beta_1 
	\end{equation}
	\nin holds in $L^2(I;V^*)$. Hence from \eqref{eq:z} we find for arbitrary $r \in R_1$
	\begin{equation*}
	\ip{\delta p}{r}_{L^2(I;V), L^2(I;V^*)} = \ip{(I - [\calF'(\bar{y})^*\bar{p}]')(\delta y) - \delta \beta_1}{z}_{L^2(I;V^*), L^2(I;V)} + \ip{\delta p}{B(Kz_{(\bs \beta)} - u_{(\bs \beta)})}_{L^2(0,\hat T; Y)},
	\end{equation*}
	\nin and thus for some $C_2 > 0$,
	\begin{align}
	\norm{\delta p}_{L^2(I;V)} &= \sup_{r \in R_1} \ \ip{\delta p}{r}_{L^2(I;V), L^2(I;V^*)} \nonumber \\
	&\leq C_2 \left( \norm{\delta y}_{W_{\infty}} + \|\ipp{\delta \beta_1}{\delta \beta_2}\|_{L^2(I;V^*) \times U} + \norm{\delta p}_{L^2(0,\hat T;Y)} + \norm{\delta u}_U \right). \label{est_p_b}
	\end{align}
	\nin Since $\norm{\delta y}_{W_{\infty}} \to 0,\ \norm{\delta p}_{L^2(0,\hat T;Y)} \to 0,\ \|{\ipp{\delta \beta_1}{\delta \beta_2}\|_{L^2(I;V^*) \times U}} \to 0$ for $n \to 0$ this implies that $\norm{\delta p}_{L^2(I;V)} \to 0$. Together with \eqref{eq:del_p} it follows that $\ds \lim_{n \rightarrow \infty} \norm{\delta p}_{W_{\infty}} = 0$.
	\end{proof}


	\begin{prop}\label{prop:est_p}
		Let assumptions (\hyperref[assump]{A}) hold and let $(\bar{y}, \bar{u})$, and $\bar p$ denote a local solution and associated adjoint state state to (\hyperref[SLP]{$\calP$}) corresponding to an initial condition $\bar y_0 \in B_Y(\tilde \delta_2)$. Then there exists $\varepsilon > 0$ and $C > 0$ such that  for all  $\bs{\hat{\beta}}$ and  $\bs{\beta} \in \hat{V} \cap B_{\calY}(\varepsilon)$
		\begin{multline}\label{est:p_dif}
		\norm{\hat{p}_{(\hat{\bs \beta})} - p_{(\bs \beta)}}_{W_{\infty}} + \norm{u_{(\hat{\bs \beta})} - u_{(\bs \beta)}}_{C(\bar I;\calU)}\!  \\
\leq C  \left( \norm{\hat{y}_{(\hat{\bs \beta})} - y_{(\bs \beta)}}_{W_{\infty}} + \norm{u_{(\hat{\bs \beta})} - u_{(\bs \beta)}}_{U} + \norm{\hat{\bs \beta} - \bs \beta}_{\calY} \right)
		\end{multline}
holds.
	\end{prop}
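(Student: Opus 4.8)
The plan is to mirror the two-step scheme of Proposition~\ref{prop:adj}. Write $(y,u,p)=\big(y_{(\bs\beta)},u_{(\bs\beta)},p_{(\bs\beta)}\big)$ and $(\hat y,\hat u,\hat p)=\big(y_{(\hat{\bs\beta})},u_{(\hat{\bs\beta})},p_{(\hat{\bs\beta})}\big)$ for the solutions of $\calT(\cdot)=\bs\beta$ and $\calT(\cdot)=\hat{\bs\beta}$; by (\hyperref[assump]{A5}) and Lemma~\ref{lem:beta_p} these are also solutions of $\ul{\calT}$, with $p,\hat p\in W_\infty$ and $u,\hat u\in C(\bar I;\calU)$, and they are bounded uniformly over $\hat V$. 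Put $\delta y=\hat y-y$, $\delta u=\hat u-u$, $\delta p=\hat p-p$, $\delta\bs\beta=\hat{\bs\beta}-\bs\beta$. Subtracting the two systems gives
\begin{equation*}
(\delta y)_t-\calA\,\delta y-\calF'(\bar y)\,\delta y-B\,\delta u=\delta\beta_3,\quad \delta y(0)=\delta\beta_4,
\end{equation*}
\begin{equation*}
-(\delta p)_t-\calA^*\delta p-\calF'(\bar y)^*\delta p+\delta y-[\calF'(\bar y)^*\bar p]'(\delta y)=\delta\beta_1,
\end{equation*}
with $\delta p\in W_\infty$ (so $\delta p(t)\to0$), while pointwise $u(t)=\mathbb{P}_{\calU_{ad}}\big(\tfrac1\alpha(B^*p(t)+\beta_2(t))\big)$ and likewise for $\hat u$.

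The control component is handled immediately: since $\mathbb{P}_{\calU_{ad}}$ is nonexpansive on $\calU$, subtracting the two pointwise formulas gives $\|\delta u(t)\|_\calU\le\tfrac1\alpha\big(\|B^*\|_{\calL(Y,\calU)}\|\delta p(t)\|_Y+\|\delta\beta_2(t)\|_\calU\big)$ for a.e.\ $t$, hence, using $W_\infty\hookrightarrow C(\bar I;Y)$,
\begin{equation*}
\|\delta u\|_{C(\bar I;\calU)}\le C\big(\|\delta p\|_{W_\infty}+\|\delta\bs\beta\|_\calY\big).
\end{equation*}
So it remains to bound $\|\delta p\|_{W_\infty}$ by $\|\delta y\|_{W_\infty}+\|\delta u\|_U+\|\delta\bs\beta\|_\calY$.

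For this I would run the transposition argument already used in Proposition~\ref{prop:adj} and Lemma~\ref{lem:beta_p}. For $r$ in a ball $R=\{r\in L^2(I;V^*):\|r\|_{L^2(I;V^*)}\le\tilde\delta\}$ with $\tilde\delta$ small, let $z=z(r)$ solve the closed-loop equation $z_t-(\calA-BK)z-\calF'(\bar y)z=-r$, $z(0)=0$. By smallness of $y_0$ together with \eqref{eq:fprime} the operator norm of $\calF'(\bar y)$ is small, so Lemma~\ref{lem-pur} gives $z\in W_\infty$ with $\|z\|_{W_\infty}\le C_K\|r\|_{L^2(I;V^*)}$, and $w:=-Kz\in\Uad$ after shrinking $\tilde\delta$. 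Testing the $\delta p$-equation against $r$, integrating by parts in time (the endpoint terms vanish because $z(0)=0$ and $\delta p,z\in W_\infty$ decay at infinity), inserting the adjoint PDE and the variational inequalities for $u$ and $\hat u$ with the admissible element $w$, and taking $\sup_{r\in R}$, one arrives at
\begin{equation*}
\|\delta p\|_{L^2(I;V)}\le C\big(\|\delta y\|_{W_\infty}+\|\delta u\|_U+\|\delta\bs\beta\|_\calY+\|\delta p\|_{L^2(I;Y)}\big),
\end{equation*}
exactly as in \eqref{est_p_b}, but now with explicit constants. Because $\calA$ is not itself exponentially stable, the constant multiplying $\|\delta p\|_{L^2(I;Y)}$ cannot be made small by this device, and removing this lower-order term is the crux.

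To absorb it I would use the coupling of the optimality system. Ehrling's lemma (compactness of $V\hookrightarrow Y$) gives $\|\delta p\|_{L^2(I;Y)}\le\epsilon\|\delta p\|_{L^2(I;V)}+C_\epsilon\|\delta p\|_{L^2(I;V^*)}$, so a small $\epsilon$ absorbs the first term into the left side above; the remaining $\|\delta p\|_{L^2(I;V^*)}$ is controlled via the same transposition argument (with test functions in $L^2(I;V)$) combined with the Hamiltonian identity obtained by pairing the $\delta y$-equation with $\delta p$, the $\delta p$-equation with $\delta y$, and subtracting. The self-adjoint terms cancel, leaving
\begin{equation*}
\|\delta y\|_{L^2(I;Y)}^2+\langle B^*\delta p,\delta u\rangle_U=-\langle\delta p(0),\delta\beta_4\rangle_Y+\langle\delta y,[\calF'(\bar y)^*\bar p]'(\delta y)\rangle-\langle\delta p,\delta\beta_3\rangle+\langle\delta y,\delta\beta_1\rangle,
\end{equation*}
which, together with $\langle B^*\delta p,\delta u\rangle_U\ge\alpha\|\delta u\|_U^2-\langle\delta\beta_2,\delta u\rangle_U$ (from the two variational inequalities) and the fact that $\|[\calF'(\bar y)^*\bar p]'\|$ is governed by $\|\bar p\|_{W_\infty}\le\wti M\|y_0\|_Y$, closes the chain of inequalities on a ball $B_\calY(\varepsilon)$, provided $\varepsilon$ and $\|y_0\|_Y$ are small enough to absorb the terms quadratic in $\|\delta y\|_{W_\infty}$ and the cross terms. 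This absorption — forced by the failure of an a priori $W_\infty$-estimate for the adjoint equation in isolation — is the main obstacle. Once $\|\delta p\|_{L^2(I;V)}\le C\big(\|\delta y\|_{W_\infty}+\|\delta u\|_U+\|\delta\bs\beta\|_\calY\big)$ is established, the adjoint PDE and (\hyperref[assump]{A5}) upgrade it to the same bound in $W_\infty$, since $\|(\delta p)_t\|_{L^2(I;V^*)}\le\|\calA^*\delta p\|_{L^2(I;V^*)}+\|\calF'(\bar y)^*\delta p\|_{L^2(I;V^*)}+\|\delta y\|_{L^2(I;V)}+\|[\calF'(\bar y)^*\bar p]'(\delta y)\|_{L^2(I;V^*)}+\|\delta\beta_1\|_{L^2(I;V^*)}$. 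Feeding this back into the control estimate of the second paragraph and adding the trivial term $\|\delta y\|_{W_\infty}$ on the left yields \eqref{est:p_dif}.
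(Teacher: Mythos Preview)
Your plan diverges from the paper precisely at the point you yourself flag as ``the main obstacle,'' and the absorption you propose does not close. After Ehrling you are left with $\|\delta p\|_{L^2(I;V^*)}$, and you say this is controlled by the Hamiltonian identity. But look at what that identity actually gives: on its left side only $\|\delta y\|_{L^2(I;Y)}^2$ and $\langle B^*\delta p,\delta u\rangle_U$ appear, neither of which dominates $\|\delta p\|_{L^2(I;V^*)}$; on its right side you still carry $\langle\delta p(0),\delta\beta_4\rangle_Y$ and $\langle\delta p,\delta\beta_3\rangle_{L^2(I;V),L^2(I;V^*)}$, both involving $\delta p$ in norms at least as strong as the one you are trying to estimate. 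So the chain does not close, and no amount of smallness of $\|y_0\|_Y$ or of $\varepsilon$ helps here, because the constants in front of these residual $\delta p$-terms are $\|\delta\beta_4\|_Y$ and $\|\delta\beta_3\|_{L^2(I;V^*)}$, which are of the same order as $\|\delta\bs\beta\|_{\calY}$---they are not small structural constants that can be absorbed.

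The missing idea is a time-splitting. Since $\bar p\in W_\infty$ decays, there is a fixed $T>0$ with $\tfrac1\alpha\|B^*\bar p(t)\|_Y\le\tfrac{\eta}{2}$ for $t\ge T$; by the continuity established in Lemma~\ref{lem:beta_p} and the choice $\beta_2\in C(\bar I;\calU)$, one finds $\varepsilon>0$ so that for every $\bs\beta\in\hat V\cap B_{\calY}(\varepsilon)$ the constraint is \emph{inactive} on $[T,\infty)$, i.e.\ $u_{(\bs\beta)}(t)=\tfrac1\alpha\big(B^*p_{(\bs\beta)}(t)+\beta_2(t)\big)$ for $t\ge T$. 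Then on $[T,\infty)$ one runs your transposition with the closed-loop test equation $z_t-(\calA-BK)z-\calF'(\bar y)z=r$, $z(T)=0$, but now the troublesome term $\langle B^*\delta p,Kz\rangle$ becomes $\langle\alpha\,\delta u-\delta\beta_2,Kz\rangle$ exactly---no residual $\delta p$ appears---and one obtains $\|\delta p\|_{W(T,\infty)}\le C(\|\delta y\|_{W_\infty}+\|\delta u\|_U+\|\delta\bs\beta\|_{\calY})$, hence a bound on $\|\delta p(T)\|_Y$. On the finite interval $[0,T]$ one uses the unstabilized test equation $z_t-\calA z-\calF'(\bar y)z=r$, $z(0)=0$ (no $BK$, so no $\delta p$-term at all); the only extra contribution is the boundary term $\|\delta p(T)\|_Y\|z(T)\|_Y$, which was just controlled. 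Your projection estimate for $\|\delta u\|_{C(\bar I;\calU)}$ and the bootstrapping from $L^2(I;V)$ to $W_\infty$ are fine and coincide with the paper's final steps.
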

	\begin{proof}
		\nin As we described in Step 3 of the proof of Lemma \ref{lem:beta_p}, since $\bar p \in W_{\infty}$ and $\ds \lim_{t \rightarrow \infty} \bar p(t) = 0$ in $Y$, there exists $T > 0$ such that
		\begin{equation*}
		\frac{1}{\alpha}\norm{B^*\bar{p}(t)}_Y \leq \frac{\eta}{2}, \quad \forall t > T.
		\end{equation*}
		\nin Since $p(0) = \bar p$, and since by Lemma \ref{lem:beta_p}, $\ds \bs{\beta} \in \calY \mapsto p_{(\bs{\beta})} \in W_{\infty} \subset C(\bar I;Y)$ is continuous, there exists $\varepsilon > 0$ such that
		\begin{equation*}
		\frac{1}{\alpha} \norm{B^* p_{(\bs{\beta})}(t) + \beta_2(t)}_{Y} \leq \frac{\eta}{4}, \quad \forall t \geq T, \ \forall \bs{\beta} \in \hat V \cap B_{\calY}(\varepsilon).
		\end{equation*}
		Consequently the constraints are inactive for these parameter values,  i.e. we have
		\begin{equation}\label{eq:u_b}
		u_{(\bs{\beta})}(t) = \frac{1}{\alpha} \left[B^* p_{(\bs{\beta})}(t)
		+ \beta_2(t)\right], \ \norm{u_{(\bs{\beta})}(t)}_{Y} \leq \eta, \quad \forall t \geq T, \ \forall \bs{\beta} \in \hat V \cap B_{\calY}(\varepsilon).
		\end{equation}
		\nin We next treat separately the cases $[0,T)$ and $[T,\infty)$. We consider first the case $[T,\infty)$ and  set $(y,u,p) = \left( y_{(\bs{\beta})}, u_{(\bs{\beta})}, p_{(\bs{\beta})} \right)$, and $\left( \hat{y}, \hat{u}, \hat{p} \right) = \left( \hat{y}_{(\bs{\hat{\beta}})}, \hat{u}_{(\bs{\hat{\beta}})}, \hat{p}_{(\bs{\hat{\beta}})} \right)$. We shall use that
		\begin{equation*}
		\norm{\hat{p} - p}_{L^2(T,\infty; V)} = \sup_{\norm{r}_{L^2(T,\infty;
				V^*)} \leq 1} \int_T^{\infty} \ip{\hat{p}(t) - p(t)}{r(t)}_{V,V^*}dt.
		\end{equation*}
		\nin Let $z \in W(T,\infty)$ be such that,
		\begin{equation*}
		z_t - (\calA - BK)z - \calF'(\bar{y})z = r, \ z(T) = 0,
		\end{equation*}
		\nin From Lemma \ref{lem-pur}, see also the proof of Proposition \ref{prop:adj}, we know that there exists a constant $C_1 > 0$ such that $\ds \norm{z}_{W(T,\infty)} \leq C_1 \norm{r}_{L^2(T,\infty;V^*)}$. Then we can estimate
		\begin{align*}
		\norm{\hat{p} - p}_{L^2(T,\infty; V)} &= \sup_{\norm{r}_{L^2(T,\infty;V^*)} \leq 1} \int_T^{\infty} \ip{\hat{p} - p}{r}_{V,V^*} dt,\\
		&\leq \sup_{\norm{r} \leq 1} \int_T^{\infty} - \ip{(\hat{p}_t - p_t) + \calA^*(\hat{p} - p) + \calF'(\bar{y})^*(\hat{p} - p)}{z}_{V^*,V}  dt \nonumber\\
		& \hspace{7cm} + \sup_{\norm{r} \leq 1} \int_T^{\infty} \ip{B^*(\hat{p} -
			p)}{Kz}_{V,V^*} dt.
		\end{align*}
		\nin In the following, $C_i$ denote constants independent of $\bs{\hat{\beta}} \text{ and } \bs{\beta} \in \hat{V} \cap B_{\calY}(\varepsilon)$. From \eqref{adjTP_1} and \eqref{eq:u_b} we obtain, for $C_2 > 0$,
		\begin{multline*}
		\norm{\hat{p} - p}_{L^2(T,\infty; V)} \leq C_2 \sup_{\norm{r} \leq 1} \int_T^{\infty} \left[ \norm{\hat{y} - y}_{V^*} + \norm{[\calF'(\bar{y})^*\bar{p}]'(\hat{y} - y)}_{V^*} + \norm{\hat{\beta}_1 - \beta_1}_{V^*}  \right.\\
		\left. + \norm{\hat{\beta}_2 - \beta_2}_{U \cap C(I;\calU)} + \alpha \norm{B^*} \norm{K} \norm{\hat{u} - u} \right] \norm{z}_V dt.
		\end{multline*}
		\nin From (\hyperref[assump]{A3}) recall that $\ds \norm{[\calF'(\bar{y})^*\bar{p}]'(\hat{y} - y)}_{L^2(I;V^*)} \leq C_3 \norm{\hat{y} - y}_{W_{\infty}}$. This gives the following estimate for $C_4 > 0$,
		\begin{equation}\label{est:p_wti}
		\norm{\hat{p} - p}_{L^2(T,\infty; V)} \leq C_4  \left( \norm{\hat{y} - y}_{W_{\infty}} + \norm{\hat{u} - u}_{U} + \norm{\hat{\bs \beta} - \bs \beta}_{\calY} \right).
		\end{equation}
		By \eqref{adjTP_1} we have $\ds \left( \hat{p}_t - p_t \right) \in L^2(T, \infty;V^*)$. Then we obtain $\hat{p} - p \in W(T,\infty)$. Then there exists $C_5 > 0$ independent of $\bs{\hat{\beta}}\text{ and } \bs{\beta} \in \hat{V} \cap B_{\calY}(\varepsilon)$ such that,		
		\begin{equation}
		\norm{\hat{p} - p}_{W(T,\infty)} \leq C_5  \left( \norm{\hat{y} - y}_{W_{\infty}} + \norm{\hat{u} - u}_{U} + \norm{\hat{\bs \beta} - \bs \beta}_{\calY} \right).
		\end{equation}
		\nin By the embedding $\ds W(T,\infty) \subset C(T,\infty;Y)$, there exists a constant $C_6 > 0$:
		\begin{equation}\label{est:dif_p_con}
		\norm{\hat{p} - p}_{C([T,\infty);Y)} \leq C_6 \left( \norm{\hat{y} - y}_{W_{\infty}} + \norm{\hat{u} - u}_{U} + \norm{\hat{\bs \beta} - \bs \beta}_{\calY} \right).
		\end{equation}
		\nin Similarly, we estimate on $[0,T]$:
		\begin{equation}
		\norm{\hat{p} - p}_{L^2(0,T; V)} = \sup_{\norm{r}_{L^2(0,T; V^*)} \leq 1} \int_0^{T} \ip{\hat{p} - p}{r}_{V,V^*} dt.
		\end{equation}
		\nin Choose $z$ as
		\begin{equation*}
		z_t - \left( \calA z + \calF'(\bar{y})z \right) = r, \ z(0) = 0,
		\end{equation*}
		\nin Then there exists $\ds C_7 > 0$ such that $\ds \norm{z}_{W(0,T)} \leq C_7 \norm{r}_{L^2(0,T; V^*)}$ by Lemma \ref{lem-pur}. Note that $C_7$ depends on $T$, but $T$ is fixed. We obtain the following estimate,
		\begin{multline*}
		\norm{\hat{p} - p}_{L^2(0,T; V)} \leq \sup_{\norm{r} \leq 1} \int_0^T -
		\ip{(\hat{p}_t - p_t) + \calA^*(\hat{p} - p) + \calF'(\bar{y})^*(\hat{p} - p)}{z}_{V^*,V} dt
		\\+ \norm{\hat{p}(T) - p(T)}_Y\norm{z(T)}_Y.
		\end{multline*}
		\nin Then by a similar computation to that for  the $t \in [T, \infty)$ case, we obtain,
		\begin{equation}\label{est:p_ot}
		\norm{\hat{p} - p}_{L^2(0,T; V)} \leq C_8 \left( \norm{\hat{y} - y}_{W_{\infty}} + \norm{\hat{\beta}_1 - \beta_1}_{L^2(I;V^*)} \right) + \norm{\hat{p}(T) - p(T)}_Y\norm{z(T)}_Y.
		\end{equation}
		\nin By \eqref{est:dif_p_con} with $\norm{z(T)}_Y \leq C_9$, we obtain
		\begin{equation*}
		\norm{\hat{p}(T) - p(T)}_Y\norm{z(T)}_Y \leq C_7 C_9 \left( \norm{\hat{y} - y}_{W_{\infty}} + \norm{\hat{u} - u}_{U} + \norm{\hat{\bs \beta} - \bs \beta}_{\calY} \right).
		\end{equation*}
		\nin Combining this estimate with \eqref{est:p_wti} and \eqref{est:p_ot}, we obtain for some $C_{10} > 0$
		\begin{equation}\label{est:dif_p_T}
		\norm{\hat{p}_{(\hat{\bs \beta})} - p_{(\bs \beta)}}_{W_{\infty}} \leq C_{10} \left( \norm{\hat{y}_{(\hat{\bs \beta})} -
			y_{(\bs \beta)}}_{W_{\infty}} + \norm{\hat{u}_{(\hat{\bs \beta})} - u_{(\bs \beta)}}_{U} + \norm{\hat{\bs \beta} - \bs \beta}_{\calY} \right).
		\end{equation}
		\nin We also have
		\begin{equation*}
		u_{(\bs \beta)} = \mathbb{P}_{\mathcal{U}_{ad}} \left[\frac{1}{\alpha}\left(B^* p_{(\bs \beta)} + \beta_2 \right)\right] \in U \cap C(\bar I;\calU),
		\end{equation*}
		\nin and thus
		\begin{align*}
		\norm{\hat u_{(\hat{\bs{\beta}})}(t) - u_{(\bs{\beta})}(t)}_{\calU} &\leq \norm{\mathbb{P}_{\mathcal{U}_{ad}} \left[\frac{1}{\alpha} \left(B^*\hat{p}_{(\hat{\bs{\beta}})}(t) + \hat{\beta}_2(t) \right)\right] - \mathbb{P}_{\mathcal{U}_{ad}} \left[\frac{1}{\alpha}\left(B^* p_{(\bs \beta)}(t) + \beta_2(t) \right)\right]}_{\calU}, \\
		&\leq \frac{1}{\alpha} \left( \norm{B^*} \norm{\hat{p}_{(\hat{\bs{\beta}})}(t) - p_{(\bs \beta)}(t)}_{Y} + \norm{\hat{\beta}_2(t) - \beta_2(t)}_{\calU} \right).
		\end{align*}
		This yields
		\begin{equation}
		\norm{\hat u_{(\hat{\bs{\beta}})} - u_{(\bs{\beta})}}_{C(\bar I;\calU)} \leq C_{11} \left( \norm{\hat{p}_{(\hat{\bs{\beta}})} - p_{(\bs \beta)}}_{W_{\infty}} + \norm{\hat{\beta}_2 - \beta_2}_{C(\bar I;\calU)} \right),
		\end{equation}
		\nin and \eqref{est:p_dif} follows.	
	\end{proof}

\nin Combining Remark \ref{rmk-Vhat}, Step (iii) of the proof of Theorem \ref{thm:lip_con}, and \eqref{est:p_dif} there exists a constant $L$ such that
\begin{equation}
\norm{\hat{y}_{(\hat{\bs \beta})} - y_{(\bs \beta)}}_{W_{\infty}} + \norm{\hat{p}_{(\hat{\bs \beta})} - p_{(\bs \beta)}}_{W_{\infty}} + \norm{\hat{u}_{(\hat{\bs \beta})} - u_{(\bs \beta)}}_{U \cap C(\bar I;Y)} \leq L \norm{\hat{\bs \beta} - \bs \beta}_{\calY},
\end{equation}
for all $\ds \bs{\hat{\beta}}$ and $\ds \bs{\beta} \in \hat{V} \cap B_{\calY}(\varepsilon)$. Thus the verification of  \eqref{H0}--\eqref{H4} is concluded.
Here and in the following the $p_1$ coordinate of the adjoint state coincides with $p(0)$. Therefore it is not indicated.

 We now obtain the following corollary to Theorem \ref{thm:lip_con}.


	\begin{clr}\label{clr:lip_con}
		Let assumptions (\hyperref[assump]{A}) hold and let $(\bar{y}, \bar{u})$ be a local solution of (\hyperref[SLP]{$\calP$}) corresponding to an initial datum $\bar y_0 \in B_Y(\tilde \delta_2)$. Then there exist $\delta_3 > 0$, a neighborhood $\hat U = \hat U(\bar{y}, \bar{u}, p) \subset W_{\infty} \times (U \cap C(\bar{I};\calU)) \times W_{\infty}$, and a constant $\mu > 0$ such that for each $y_0 \in B_Y(\bar y_0, \delta_3)$ there exists a unique $(y(y_0), u(y_0), p(y_0)) \in \hat U$ satisfying the first order condition, and
		\begin{equation}\label{eq:clr_lc_2}
		\norm{ \left( y(\hat{y}_0), u(\hat{y}_0)), p(\hat{y}_0)) \right) - \left(y(\tilde{y}_0), u(\tilde{y}_0), p(\tilde{y}_0)) \right)}_{W_{\infty} \times (U \cap C(\bar{I};\calU)) \times W_{\infty}} \leq \mu \norm{\hat{y}_0 - \tilde{y}_0}_{Y},
		\end{equation}
for all $\hat{y}_0, \tilde{y}_0 \in B_Y(\bar y_0,\delta_3)$,  and $\left(y(y_0), u(y_0) \right)$ is a local solution of (\hyperref[SLP]{$\calP$}).
	\end{clr}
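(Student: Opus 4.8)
The plan is to read the statement off Theorem~\ref{thm:lip_con}, applied to the abstract problem \eqref{Pq} under the identifications of \eqref{eq3.4}: $x=(y,u)\in X=W_\infty\times U$, the parameter $q=y_0\in P=Y$, the multiplier $\lambda=(p,p_1)\in W^*=L^2(I;V)\times Y$, the cost $f$ of \eqref{eq:fyu}, the constraint map $e$ of \eqref{def_eyu}, $C=\Uad$, and the smaller spaces $\ul X=W_\infty\times(U\cap C(\bar I;\calU))$, $\ul{X^*}=L^2(I;V^*)\times(U\cap C(\bar I;\calU))$, $\ul{W^*}=\wti W_\infty$, together with the operators $\calT$, $\ul\calT$ of \eqref{def:T}, \eqref{def:ulT}. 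First I would collect the hypotheses of Theorem~\ref{thm:lip_con} at the local solution $x_0=(\bar y,\bar u)$ with nominal parameter $q_0=\bar y_0\in B_Y(\tilde\delta_2)$: Lemma~\ref{le4.7} supplies \eqref{H0}--\eqref{H00}, \eqref{H000} and \eqref{H3}; the existence part of \eqref{H4}, namely that the solution of $\calT(x,\lambda)=\bs\beta$ for $\bs\beta\in\hat V$ actually lies in $\ul X\times\ul{W^*}$, is provided by step~(ii) of the proof of Theorem~\ref{thm:lip_con} (see also Remark~\ref{rmk-Vhat}) together with Lemma~\ref{lem:beta_p}, which shows $p_{(\bs\beta)}\in W_\infty$ (hence $(p_{(\bs\beta)},{p_1}_{(\bs\beta)})=(p_{(\bs\beta)},p_{(\bs\beta)}(0))\in\wti W_\infty$) and $u_{(\bs\beta)}\in C(\bar I;\calU)$; and the Lipschitz-type inequality required in \eqref{H4} is exactly estimate \eqref{est:p_dif} of Proposition~\ref{prop:est_p}, valid on $\hat V\cap B_\calY(\varepsilon)$, after replacing $\hat V$ by $\hat V\cap B_\calY(\varepsilon)$.

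Once \eqref{H0}--\eqref{H4} are in force, I would invoke Theorem~\ref{thm:lip_con} to obtain a neighborhood $U=U(x_0,\lambda_0)\subset\ul X\times\ul{W^*}$, a neighborhood $N=N(\bar y_0)\subset Y$, a nontrivial neighborhood $\wti N\subset N$ of $\bar y_0$, and a constant $\mu>0$ such that for every $y_0\in N$ there is a unique $(x(y_0),\lambda(y_0))=(y(y_0),u(y_0),p(y_0),p_1(y_0))\in U$ solving the restricted first order system \eqref{eq:thm_lc_1}; unwinding \eqref{def:ulT} this system is precisely the concrete optimality system given by the adjoint equation \eqref{adjP-1}, the variational inequality \eqref{adjP-3}, the state equation \eqref{1.1a}--\eqref{1.1b}, and the initial condition. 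Moreover \eqref{eq:thm_lc_2} holds and $(y(y_0),u(y_0))$ is a local solution of $(\calP)$ for $y_0\in\wti N$.

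It then remains to rewrite this in the asserted form. Since the coordinate $p_1(y_0)=p(y_0)(0)$ is determined by $p(y_0)$ and $\wti W_\infty$ is identified with $W_\infty$, I would suppress the $p_1$-component and set $\hat U=\hat U(\bar y,\bar u,\bar p)$ to be the image of $U$ under $(y,u,p,p_1)\mapsto(y,u,p)$; because $\ul X\times\ul{W^*}=W_\infty\times(U\cap C(\bar I;\calU))\times\wti W_\infty$ we get $\hat U\subset W_\infty\times(U\cap C(\bar I;\calU))\times W_\infty$. Choosing $\delta_3>0$ small enough that $B_Y(\bar y_0,\delta_3)\subset\wti N$ (and, say, $B_Y(\bar y_0,\delta_3)\subset B_Y(\tilde\delta_2)$), uniqueness, the first order condition and the local-solution property hold for every $y_0\in B_Y(\bar y_0,\delta_3)$, and since the contribution of the $p_1$-coordinate to the $\ul X\times\ul{W^*}$-norm is that of $p(\cdot)(0)$, which is dominated by $\norm{p(\cdot)}_{W_\infty}$, estimate \eqref{eq:thm_lc_2} immediately yields \eqref{eq:clr_lc_2}.

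The only genuinely hard inputs — the regularity upgrade of the perturbed adjoints from $L^2(I;V)$ to $W_\infty$ and the Lipschitz estimate in the strong norms over the infinite horizon — have already been carried out in Lemma~\ref{lem:beta_p} and Proposition~\ref{prop:est_p}; at the level of this corollary the work is essentially bookkeeping, the only subtle points being the identification of $\ul X\times\ul{W^*}$ as a subspace of $W_\infty\times(U\cap C(\bar I;\calU))\times W_\infty$ and the fact that one must restrict from the ball $B_Y(\tilde\delta_2)$ to the possibly smaller ball $B_Y(\bar y_0,\delta_3)$ on which Theorem~\ref{thm:lip_con} guarantees that the solution of the optimality system is a local solution of $(\calP)$.
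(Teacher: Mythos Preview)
Your proposal is correct and follows essentially the same approach as the paper: the corollary is presented there as an immediate consequence of Theorem~\ref{thm:lip_con} once \eqref{H0}--\eqref{H4} have been verified via Lemma~\ref{le4.7}, Lemma~\ref{lem:beta_p}, and Proposition~\ref{prop:est_p}, and your bookkeeping (identifying $\ul X\times\ul{W^*}$ with the concrete product space, suppressing the redundant $p_1$-coordinate, and shrinking to $B_Y(\bar y_0,\delta_3)\subset\wti N$) is exactly what is needed to translate the abstract conclusion into the stated form.
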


\nin Next we obtain one of the main results of this paper, the  Fr\'{e}chet differentiability of the local value function associated to  (\hyperref[SLP]{$\calP$}). By  referring to a local value function we pay attention to the fact that for some $y_0\in B_Y(\tilde \delta_2)$, problem  (\hyperref[SLP]{$\calP$})  may not admit a  unique solution. But since due to the second order optimality condition local solutions are locally unique  under small perturbations of $y_0$, there is a well-defined local value function. We  continue to use the notation for $\hat U$ and $B_Y(\bar y_0, \delta_3)$ of Corollary \ref{clr:lip_con}.

	\begin{thm}\label{thm-CD-r}
(Sensitivity of Cost)
Let assumptions (\hyperref[assump]{A}) hold and let $(\bar{y}, \bar{u})$ be a local solution of (\hyperref[SLP]{$\calP$}) corresponding to an initial datum $\bar y_0 \in B_Y(\tilde \delta_2)$. Then  for each $y_0 \in B_Y(\bar y_0, \delta_3)$
the local value function $\V$ associated to (\hyperref[SLP]{$\calP$}) is Fr\'{e}chet differentiable with derivative given by
		\begin{equation}
			\V'(y_0) = - p(0;y_0).
		\end{equation}
	\end{thm}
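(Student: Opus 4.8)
The plan is to differentiate $\V(y_0) = J(y(y_0), u(y_0))$ directly, exploiting the Lipschitz stability of the optimality system from Corollary~\ref{clr:lip_con} together with the optimality conditions of Proposition~\ref{prop:adj}. Fix $y_0 \in B_Y(\bar y_0, \delta_3)$ with associated solution $(y,u,p) = (y(y_0), u(y_0), p(y_0))$, and for a small increment $\delta y_0 \in Y$ let $(\hat y, \hat u, \hat p)$ be the solution associated to $y_0 + \delta y_0$; set $\delta y = \hat y - y$, $\delta u = \hat u - u$, $\delta p = \hat p - p$, so that Corollary~\ref{clr:lip_con} gives $\|\delta y\|_{W_\infty} + \|\delta u\|_{U \cap C(\bar I;\calU)} + \|\delta p\|_{W_\infty} \le \mu \|\delta y_0\|_Y$. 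Since $J$ is quadratic,
\[
  \V(y_0 + \delta y_0) - \V(y_0) = \int_0^\infty \langle y, \delta y\rangle_Y\, dt + \alpha \int_0^\infty \langle u, \delta u\rangle_{\calU}\, dt + R,
\]
where $R = \tfrac12 \|\delta y\|_{L^2(I;Y)}^2 + \tfrac{\alpha}{2}\|\delta u\|_{L^2(I;\calU)}^2 = O(\|\delta y_0\|_Y^2)$ by the preceding bound. It therefore suffices to show that the sum of the two linear integrals equals $-\langle p(0), \delta y_0\rangle_Y$ up to a term of order $\|\delta y_0\|_Y^2$.

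For the first integral, I would note that $\delta y$ solves the difference of the two state equations, which by a second-order Taylor expansion of $\calF$ about $y$ (using \hyperref[assump]{A3} and the uniform $W_\infty$-bound on optimal states from Lemma~\ref{WPP}) can be written as $\delta y_t - \calA \delta y - \calF'(\bar y)\delta y = B\delta u + \rho$, $\delta y(0) = \delta y_0$, with $\|\rho\|_{L^2(I;V^*)} \le C\|\delta y\|_{W_\infty}^2 = O(\|\delta y_0\|_Y^2)$. Testing the adjoint identity \eqref{adjP-1} (at the base state, so with multiplier $p$ and $p_1 = p(0)$) against $v = \delta y \in W_\infty$ and substituting this equation gives
\[
  \int_0^\infty \langle y, \delta y\rangle_Y\, dt = -\langle \delta y_0, p(0)\rangle_Y - \langle \delta u, B^* p\rangle_U - \langle \rho, p\rangle_{L^2(I;V^*), L^2(I;V)},
\]
and the last term is $O(\|\delta y_0\|_Y^2)$ since $\|p\|_{L^2(I;V)}$ is bounded uniformly over $B_Y(\bar y_0,\delta_3)$. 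For the second integral, I would use the variational inequality \eqref{adjP-3} at $y_0$ tested with $w = \hat u \in \Uad$, which yields $\alpha \langle u, \delta u\rangle_U \ge \langle B^* p, \delta u\rangle_U$, and its analogue at $y_0 + \delta y_0$ tested with $w = u$, which yields $\alpha \langle \hat u, \delta u\rangle_U \le \langle B^* \hat p, \delta u\rangle_U$; together these sandwich $\alpha\langle u,\delta u\rangle_U$ between $\langle B^* p, \delta u\rangle_U$ and $\langle B^* p, \delta u\rangle_U + \langle B^* \delta p, \delta u\rangle_U$, and since $|\langle B^* \delta p, \delta u\rangle_U| \le \|B^*\|\,\|\delta p\|_{L^2(I;Y)}\,\|\delta u\|_U = O(\|\delta y_0\|_Y^2)$ we obtain $\alpha\langle u, \delta u\rangle_U = \langle B^* p, \delta u\rangle_U + O(\|\delta y_0\|_Y^2)$. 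Adding the two displays, the terms $\mp\langle B^* p, \delta u\rangle_U$ cancel and we are left with $\V(y_0+\delta y_0) - \V(y_0) = -\langle p(0), \delta y_0\rangle_Y + O(\|\delta y_0\|_Y^2)$, i.e. $\V$ is Fr\'echet differentiable at $y_0$ with $\V'(y_0) = -p(0;y_0)$; continuity of $y_0 \mapsto -p(0;y_0)$, hence the $C^1$ assertion of Theorem~\ref{thm-CD}, then follows from Corollary~\ref{clr:lip_con} and the continuous embedding $W_\infty \subset C(\bar I;Y)$.

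The main obstacle is the presence of the control constraint: because $\Uad$ is merely convex, \eqref{adjP-3} is only a variational inequality, so one cannot integrate the control term by parts against a linearized adjoint. The device above — sandwiching $\alpha\langle u,\delta u\rangle_U$ between the two one-sided inequalities at $y_0$ and $y_0+\delta y_0$ — works precisely because the gap $\langle B^*\delta p, \delta u\rangle_U$ is quadratic in $\delta y_0$, which in turn rests on the \emph{strong} Lipschitz estimate (in $W_\infty$ and $C(\bar I;\calU)$) established in Proposition~\ref{prop:est_p} and Corollary~\ref{clr:lip_con}; the weaker ``variational'' bound on $\delta p$ in $L^2(I;V)$ alone would only give a first-order error and would not close the argument — this is exactly why the two-step stability scheme of Section~\ref{Sec-abs_lip} was needed. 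A secondary technical point is to confirm that the remainder $\rho$ coming from the nonlinearity is genuinely of second order in $L^2(I;V^*)$, for which one uses that $\calF$ is $C^2$ with $\calF''$ bounded on the relevant bounded subset of $W_\infty$.
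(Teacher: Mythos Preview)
Your argument is correct (modulo the harmless slip of writing $\calF'(\bar y)$ where you mean $\calF'(y)=\calF'(y(y_0))$; the linearization and the adjoint you test against must be taken at the current base point $y_0$, not at the reference $\bar y_0$). The overall architecture --- expand $J$ quadratically, convert $\int\langle y,\delta y\rangle$ via the adjoint, and control the remainder coming from $\calF$ by (\hyperref[assump]{A3}) --- is the same as in the paper. The genuine difference lies in how the control--constraint term $\langle\alpha u-B^*p,\delta u\rangle_U$ is handled. The paper introduces a scalar parameter $s$, extracts from $s_n^{-1}(\tilde u-u)$ a weak limit $\dot u$ in $U$, and uses the two variational inequalities at $y_0$ and at $y_0+s_n\delta y_0$ to show that $\langle\alpha u-B^*p,\dot u\rangle_U=0$; this yields only an $o(s)$ remainder and hence Gateaux differentiability, which is then upgraded to Fr\'echet via the continuity of $y_0\mapsto p(0;y_0)$. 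Your sandwich argument combines the two variational inequalities directly and bounds the gap by $|\langle B^*\delta p,\delta u\rangle_U|+\alpha\|\delta u\|_U^2$, which is $O(\|\delta y_0\|_Y^2)$ thanks to the \emph{strong} Lipschitz estimate on $(\delta u,\delta p)$ from Corollary~\ref{clr:lip_con}. This is a cleaner, more quantitative route: it avoids the weak--limit extraction and delivers Fr\'echet differentiability with a quadratic remainder in one stroke, at the price of using the Lipschitz bound on $\delta p$ (which the paper's argument for this particular term does not need). Both approaches, of course, ultimately rely on the two--step stability analysis of Section~\ref{Sec-abs_lip} to obtain the continuity of $\V'$.
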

	\begin{proof}
Let  $\bar y_0 \in B_Y(\tilde \delta_2), y_0 \in B_Y(\bar y_0, \delta_3)$, and choose $\delta y_0$ sufficiently small so that  $y_0 + \delta y_0 \in B_Y(\bar y_0, \delta_3)$ as well. Following Corollary \ref{clr:lip_con} let $(\tilde{y}(y_0 + s(\delta y_0)), \tilde{u}(y_0 + s(\delta y_0)), \tilde{p}(y_0 + s(\delta y_0)))\in \hat U$ for $s\in [0,1]$ be solutions of the optimality system with  $(\tilde{y}(y_0 + s(\delta y_0)), \tilde{u}(y_0 + s(\delta y_0)))$  local solutions to (\hyperref[SLP]{$\calP$}).
 We obtain
		\begin{multline}
		\V(y_0 + s(\delta y_0)) - \V(y_0) = \left( \frac{1}{2} \norm{\tilde{y}}^2_{L^2(I,Y)} + \frac{\alpha}{2} \norm{\tilde{u}}^2_U\right) - \left( \frac{1}{2} \norm{y}^2_{L^2(I,Y)} + \frac{\alpha}{2} \norm{u}^2_U\right), \\= \ip{y}{\tilde{y} - y}_{L^2(I,Y)} + \alpha \ip{u}{\tilde{u} - u}_U + \frac{1}{2} \norm{\tilde{y} - y}^2_{L^2(I,Y)} + \frac{{\alpha}}{2} \norm{\tilde{u} - u}^2_{U}.
		\end{multline}
		\nin Observe the identity
		\begin{multline*}
		\ip{y}{\tilde{y} - y}_{L^2(I,Y)} + \alpha \ip{u}{ \tilde{u} - u}_U = {-}\ipp{p(0)}{s(\delta y_0)}_Y {-} \ip{(\tilde{y}_t - y_t) - \calA( \tilde{y} -  y) -  \calF'(y)(\tilde{y} - y)}{p} + \alpha \ip{u}{\tilde{u} - u}_U,\\
		= {-}\ipp{p(0)}{s(\delta y_0)}_Y {-} \ip{\calF(\tilde{y}) - \calF(y) - \calF'(y)(\tilde{y} - y)}{p}_{L^2(I;V^*), L^2(I;V)} + \ip{\alpha u - B^*p}{\tilde{u} - u}_U,
		\end{multline*}
		\nin where $p=p(y_0)$. Now we have for $\ds \V(y_0 + s(\delta y_0)) - \V(y_0)$,
		\begin{multline}
		\V(y_0 + {s}(\delta y_0)) - \V(y_0) = {-}\ipp{p(0)}{{s}(\delta y_0)}_Y + \ip{\calF(y) - \calF(\tilde{y}) + \calF'(y)(\tilde{y} - y)}{p}_{L^2(I,V^*),L^2(I,V)} \\ + \ip{\alpha u - B^*p}{\tilde{u} - u}_{U} + \frac{1}{2} \norm{\tilde{y} - y}^2_{L^2(I,Y)} + \frac{{\alpha}}{2} \norm{\tilde{u} - u}^2_{U}.
		\end{multline}
		\nin Since $p \in L^2(I;V)$, $\norm{\tilde{y} - y}_{W_{\infty}} = O(s)$, and by the continuous Fr\'{e}chet differentiability of $\calF'$ due to (\hyperref[assump]{A3}) we have
		\begin{equation}\label{F_o}
		\abs{\ip{\calF(\tilde{y}) - \calF(y) + \calF'(y)(\tilde{y} - y)}{p}_{L^2(I,V^*),L^2(I,V)}} = o(s).
		\end{equation}
		\nin Let $s_n \to 0$ be an arbitrary convergent sequence. By Corollary \ref{clr:lip_con} we have that
		\begin{equation*}
			\norm{\tilde{u}(y_0 + s_n(\delta y_0)) - u(y_0)}_U \leq \mu s_n(\delta y_0),
		\end{equation*}
		\nin for all $s_n$ sufficiently small. Hence there exists a subsequence, denoted by the same notation and some $\dot u$ such that
		\begin{equation*}
		s_n^{-1}\left(\tilde{u}(y_0 + s_n(\delta y_0)) - u(y_0)\right) \rightharpoonup \dot u \text{ weakly in } U.
		\end{equation*}
		\nin Using \eqref{adjP-3}, we have
		\begin{equation*}
		\lim_{n \rightarrow \infty} s_n^{-1} \ip{\alpha u - B^*p}{\tilde u - u}_U = \ip{\alpha u - B^*p}{\dot u}_U \geq 0.
		\end{equation*}
		\nin Analogously
		\begin{equation*}
		\lim_{n \rightarrow \infty} s_n^{-1} \ip{\alpha \tilde u - B^*p}{u - \tilde u}_U = \ip{\alpha u - B^*p}{\dot u}_U \leq 0.
		\end{equation*}
		and hence $\ip{\alpha u - B^*p}{\dot u}_U = 0$. Since the sequence $\{ s_n \}$ is arbitrary, we obtain
		\begin{equation}\label{u_dot_o}
		\ip{\alpha u - B^*p}{\tilde u - u}_U = o(s).
		\end{equation}
		\nin Corollary \ref{clr:lip_con} yields,
		\begin{equation}\label{yu_o}
		\norm{\tilde{y}(y_0 + {s_n}(\delta y_0)) - y(y_0)}^2_{L^2(I;Y)} + \alpha \norm{\tilde{u}(y_0 + {s_n}(\delta y_0)) - u(y_0)}^2_{L^2(I;Y)} = o(s_n).
		\end{equation}
		\nin Combining \eqref{F_o}, \eqref{u_dot_o}, and \eqref{yu_o} we obtain
		\begin{equation}
		\lim_{s \rightarrow 0^+} {s^{-1}}\left(\V(y_0 + s(\delta y_0)) - \V(y_0)\right) = {-}\ipp{p(0)}{(\delta y_0)}_Y.
		\end{equation}
This implies the Gateaux differentiability. Since $y_0 \to p(y_0)$ is continuous from $B_Y(\bar y_0;\delta_3)$ to $C(\bar I,Y)$ the mapping $y_0\to \mathcal{V}(y_0)$ is Fr\'{e}chet differentiable in $B_Y(\bar y_0;\delta_3)$.
	\end{proof}
		\begin{rmk}[Sensitivity w.r.t. other parameters]\label{rmk-para}
		We have developed a  technique to verify the continuous differentiability of the local value function $\V$ pertaining to a semilinear parabolic equation on infinite time horizon subject to control constraints with respect to  small  initial data $y_0 \in Y$. Thus the parameter $q$ in \eqref{Pq} is the initial condition $y_0$. The reason to focus on this case is due to feedback control. Without much additional effort the sensitivity analysis of the value function could be carried out with respect to other parameters as for instance additive noise on the right hand side of the state equation. The papers cited in the introduction, see e.g. \cite{GHH:2005}, \cite{GV:2006}, consider such situations for the finite horizon case.
 \end{rmk}	

	\section{Proof of Theorem \ref{thm-HJB}: Derivation of the HJB Equation.}\label{sec4a}

	Utilizing the results established so far we now verify  that the (global) value function $\V$ (i.e. the value function associated to global minima) is a solution to a Hamilton-Jacobi-Bellman equation. The initial conditions will be chosen from the neighborhood $Y_0$ of the origin in $Y$ so that the assertions of Theorem \ref{thm-CD-r} and Corollary \ref{clr:lip_con} are available. It will be convenient to recall the  dynamic programming principle for the infinite time horizon problem:  let $y_0$ be an initial condition for which  a solution to (\hyperref[SLP]{$\calP$}) exists. Then for all $\tau > 0$, we have
	\begin{equation}\label{d-prog-p}
	\V(y_0) = \inf_{u \in L^2(0,\tau;\mathcal{U}_{ad})} \int_{0}^{\tau} \ell (S(u,y_0;t),u(t)) dt + \V(S(u,y_0;\tau)),
	\end{equation}
	\nin where $\ds \ell(y,u) = \frac{1}{2} \norm{y}^2_Y  +  \frac{\alpha}{2} \norm{u}^2_{\calU}$, and $S(u,y_0;t)$ denotes the solution to \eqref{1.1a}, \eqref{1.1b} on $(0,\tau]$. \\
	
	\nin For convenience  we restate  Theorem \ref{thm-HJB}. Utilizing the notation that we have already established we can now slightly ease the assumption on the regularity of $\calF (\bar y)$.
	
	\begin{thm}\label{thm-HJB-r}
Let assumptions (\hyperref[assump]{A}) hold and let $(\bar{y}, \bar{u})$ be a global solution of (\hyperref[SLP]{$\calP$}) corresponding to an initial datum $\bar y_0 \in B_Y(\tilde \delta_2)$.
Let $Y_0$ denote the subset of initial conditions in  $B_Y(\bar y_0, \delta_3)$ which allow global solutions in $\hat U$, and   assume that for each $y_0\in \calD(\calA) \cap Y_0$ there exists $T_{y_0}>0$ such that $\calF (\bar y)\in C([0,T_{y_0});Y)$. Then the following Hamilton-Jacobi-Bellman equation holds at $y_0$:
		\begin{equation} \label{eq:5.3}
		\V'(y)(\calA y + \calF(y)) + \frac{1}{2} \norm{y}^2_Y + \frac{\alpha}{2} \norm{ \mathbb{P}_{\mathcal{U}_{ad}} \left(\frac{1}{\alpha}B^*\V'(y) \right)}^2_Y + \left\langle B^* \V'(y),\mathbb{P}_{\mathcal{U}_{ad}} \left(\frac{1}{\alpha}B^*\V'(y) \right)\right\rangle_Y = 0.
		\end{equation}
If for the optimal trajectory  $\bar y(t) \in B_Y(\bar y_0, \delta_3)\cap \calD(\calA) $ for a.a. $t\in (0,\infty$) and $T_{y_0}=\infty$, then 	\eqref{eq:5.3} holds at a.a. $t\in (0,\infty$) and
		\begin{equation}\label{eq:5.4}
		{\bar u}(t) =  \mathbb{P}_{\mathcal{U}_{ad}} \left(\frac{1}{\alpha}  B^*\V'({\bar y}(t)) \right).
		\end{equation}
	\end{thm}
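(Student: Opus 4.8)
The plan is to obtain \eqref{eq:5.3} from the dynamic programming principle \eqref{d-prog-p} together with the differentiability result of Theorem \ref{thm-CD-r}. Since $(\bar y,\bar u)$ is a \emph{global} minimizer, the principle of optimality shows that for every small $\tau>0$ the time-shifted tail $\big(\bar y(\cdot+\tau),\bar u(\cdot+\tau)\big)$ is optimal for the initial datum $\bar y(\tau)$, so that \eqref{d-prog-p} holds with equality when the optimal control is inserted:
\[
\V(y_0)=\int_0^\tau \ell(\bar y(t),\bar u(t))\,dt+\V(\bar y(\tau)).
\]
Rearranging and dividing by $\tau$ gives $\tfrac1\tau\big(\V(y_0)-\V(\bar y(\tau))\big)=\tfrac1\tau\int_0^\tau \ell(\bar y(t),\bar u(t))\,dt$, and the whole proof reduces to passing to the limit $\tau\to0^+$ in the two sides.

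For the right-hand side, $\bar y\in W_\infty\subset C([0,\infty);Y)$ and, by Proposition \ref{prop:adj}, $\bar u\in C(\bar I;\calU)$, so $t\mapsto \ell(\bar y(t),\bar u(t))$ is continuous and the average converges to $\tfrac12\|y_0\|_Y^2+\tfrac\alpha2\|\bar u(0)\|_\calU^2$. The left-hand side is where the hypotheses $y_0\in\calD(\calA)$ and $\calF(\bar y)\in C([0,T_{y_0});Y)$ enter: I would first show that $t\mapsto\bar y(t)$ is right-differentiable in $Y$ at $t=0$. Setting $g:=\calF(\bar y)+B\bar u\in C([0,T_{y_0});Y)$ (using $B\in\calL(\calU,Y)$ and $\bar u\in C(\bar I;\calU)$), the function $\bar y$ coincides on $[0,T_{y_0})$, by uniqueness (Consequence \ref{cons:2}), with the mild solution $t\mapsto e^{\calA t}y_0+\int_0^t e^{\calA(t-s)}g(s)\,ds$ of $\dot z=\calA z+g$, $z(0)=y_0$. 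Since $y_0\in\calD(\calA)$ we have $t^{-1}(e^{\calA t}y_0-y_0)\to\calA y_0$ in $Y$, and since $g$ is continuous and $e^{\calA\cdot}$ strongly continuous, $t^{-1}\int_0^t e^{\calA(t-s)}g(s)\,ds\to g(0)$ in $Y$; hence $\dot{\bar y}(0^+)=\calA y_0+\calF(y_0)+B\bar u(0)$. Because $\V$ is Fr\'echet differentiable at $y_0$ (Theorem \ref{thm-CD-r}) and $\bar y(\tau)\to y_0$ with $\tau^{-1}(\bar y(\tau)-y_0)\to\dot{\bar y}(0^+)$, the chain rule yields $\tau^{-1}\big(\V(\bar y(\tau))-\V(y_0)\big)\to \V'(y_0)\big(\calA y_0+\calF(y_0)+B\bar u(0)\big)$.

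Combining the two limits gives $-\V'(y_0)(\calA y_0+\calF(y_0))-\langle B^*\V'(y_0),\bar u(0)\rangle_\calU=\tfrac12\|y_0\|_Y^2+\tfrac\alpha2\|\bar u(0)\|_\calU^2$. It then remains to substitute the feedback form of $\bar u(0)$: the variational inequality \eqref{adjP-3} is equivalent to $\bar u(0)=\mathbb{P}_{\calU_{ad}}(\tfrac1\alpha B^*\bar p(0))$, while Theorem \ref{thm-CD-r} gives $\V'(y_0)=-p(0;y_0)=-\bar p(0)$, so $\bar u(0)=\mathbb{P}_{\calU_{ad}}(-\tfrac1\alpha B^*\V'(y_0))$. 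Inserting this and regrouping produces exactly \eqref{eq:5.3}, and the displayed expression for $\bar u(0)$ is \eqref{eq:5.4} at $t=0$. For the trajectory version (assuming $T_{y_0}=\infty$ and $\bar y(t)\in B_Y(\bar y_0,\delta_3)\cap\calD(\calA)$ for a.a.\ $t$), I would repeat the argument with $y_0$ replaced by $\bar y(t_0)$: by the principle of optimality the tail of $(\bar y,\bar u,\bar p)$ from $t_0$ is the unique solution in $\hat U$ of the optimality system for the datum $\bar y(t_0)$, whence $\V'(\bar y(t_0))=-\bar p(t_0)$, and the same computation delivers \eqref{eq:5.3} and \eqref{eq:5.4} at $t_0$.

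The main obstacle is the regularity step: rigorously establishing right-differentiability of $\bar y$ in the $Y$-topology at the base point, and, in the trajectory case, justifying the principle of optimality and the identification of the shifted adjoint in the infinite-horizon setting — in particular that the tail of the optimal triple again lies in the neighborhood $\hat U$ on which Corollary \ref{clr:lip_con} and Theorem \ref{thm-CD-r} apply, so that $\V'$ at the shifted state is again $-\bar p$. The remaining limit passages are routine.
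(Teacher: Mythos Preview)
Your argument is correct and coincides with the paper's proof in the first half: both use the dynamic programming equality for the optimal pair, establish right-differentiability of $\bar y$ in $Y$ at $t=0$ via the mild-solution formula (using $y_0\in\calD(\calA)$ and continuity of $\calF(\bar y)+B\bar u$ with values in $Y$), and apply the chain rule with the Fr\'echet differentiability of $\V$ to obtain
\[
\V'(y_0)\big(\calA y_0+\calF(y_0)+B\bar u(0)\big)+\ell(y_0,\bar u(0))=0.
\]

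The genuine difference lies in how $\bar u(0)$ is identified with the projection. You invoke the first-order condition \eqref{adjP-3} together with the explicit formula $\V'(y_0)=-\bar p(0)$ from Theorem~\ref{thm-CD-r} to read off $\bar u(0)=\mathbb{P}_{\calU_{ad}}\big(-\tfrac{1}{\alpha}B^*\V'(y_0)\big)$ directly. The paper instead runs a second step: for an arbitrary $u\in\calU_{ad}$ it inserts the constant control into the dynamic programming \emph{inequality} \eqref{d-prog-p}, passes to the limit as before, and obtains $\V'(y_0)(\calA y_0+\calF(y_0)+Bu)+\ell(y_0,u)\ge 0$ for every $u\in\calU_{ad}$. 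Since equality holds at $u=\bar u(0)$, this shows $\bar u(0)$ minimizes the constrained quadratic Hamiltonian, whence the projection formula follows without appealing to the adjoint identification. Your route is shorter and exploits the Pontryagin-type condition already in hand; the paper's route is the textbook HJB verification, deriving the minimization over $\calU_{ad}$ purely from dynamic programming and thereby not relying on the specific identity $\V'=-\bar p(0)$ (only on differentiability of $\V$). Both reach \eqref{eq:5.3}, and the trajectory statement follows in either approach exactly as you outline.
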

	
	\begin{proof}
		\nin The proof is similar to that of \cite[Proposition 10]{BKP:2018}. For the sake of completeness and since it also requires some changes we provide it here. Choose and fix some  $y_0 \in \calD(\calA) \cap Y_0$. Then the existence of a (globally) optimal pair $(\hat{y}, \hat{u}) \in W_{\infty} \times \Uad$ to (\hyperref[SLP]{$\calP$}) and of an associated adjoint state $\hat p\in W_\infty$ with $(\hat y,\hat u, \hat p) \in \hat U$  are guaranteed, see Corollary \ref{clr:lip_con}. In particular we have that $\ds \hat{u}(t) = \mathbb{P}_{\mathcal{U}_{ad}} \left( \frac{1}{\alpha}B^*{\hat p}(t) \right)$, and since $\ds \hat p \in C([0,\infty);Y)$ we have that $\ds \hat u\in C([0,\infty);Y)$. Let $u_0$ denote the limit of $\hat{u}$ as time $t$ tends to $0$. Since $\hat{y} \in C([0,\infty); Y)$ and since $B_Y(y_0,\delta_3) $ is open there exists $\tau_{y_0} > 0$  such that $\hat{y}(t) \in B_Y(y_0,\delta_3) $, for all $t \in [0,\tau_{y_0})$.\\
		
		\nin \underline{Step 1:}	
		Let us first prove that
		\begin{equation}\label{hjb.1}
		\V'(y_0) \big(\calA y_0 + \calF(y_0) + B u_0 \big) + \ell(y_0, u_0) = 0.
		\end{equation}
		For this purpose we invoke the dynamic programing principle:  We have
		\begin{equation}\label{v-lim}
		\frac{1}{\tau} \int_{0}^{\tau} \ell(\hat{y}(s), \hat{u}(s))ds + \frac{1}{\tau} \big( \V(\hat{y}(\tau)) - \V(y_0) \big) = 0,
		\end{equation}
		where we choose  $\tau \in (0,\min(T_{y_0}, \tau_{y_0}))$ . By continuity of $\hat{y}$ and $\hat{u}$ at time $0$, the first term  converges to $\ds \ell(y_0, u_0)$ as $\tau \to 0$. To take $\tau \to 0$ in the second term we first consider
		\begin{equation}\label{eq:aux5}
		\frac{1}{\tau}\big( \hat{y}(\tau) - y_0 \big) = \frac{1}{\tau}\big(e^{\calA \tau}y_0 - y_0 \big) + \frac{1}{\tau} \int_{0}^{\tau} e^{\calA(\tau - s)} \big[ \calF(\hat{y}(s)) + B \hat{u}(s) \big] ds.
		\end{equation}
		Using the facts that $y_0\in \calD(\calA)$, that  the terms in square brackets are continuous with values in $Y$, and that $\calA$ generates a strongly continuous semigroup on $Y$, we can pass to the limit in \eqref{eq:aux5} to obtain that
		\begin{equation}\label{y-lim}
		\lim_{\tau \to 0^+}\frac{1}{\tau}\big( \hat{y}(\tau) - y_0 \big)  = \calA y_0 + \calF(y_0) + B u_0 \text{ in } Y.
		\end{equation}
		Now we return to the second term in \eqref{v-lim} which we express as
		\begin{equation}\label{eq:aux6}
		\frac{1}{\tau} \big( \V(\hat{y}(\tau)) - \V(y_0) \big)=  \int_{0}^{1} \V' \big(y_0 + s ( \hat{y}(\tau) - y_0 ) \big)ds \; \frac{1}{\tau}( \hat{y}(\tau) - y_0 ).
		\end{equation}
		Using \eqref{y-lim} and since $y\to  \V' (y)$ is continuously differentiable at  $y_0$,  we can pass to the limit in \eqref{eq:aux6} to obtain
		\begin{equation}
		\lim_{\tau \to 0^+}\frac{1}{\tau} \big( \V(\hat{y}(\tau)) - \V(y_0) \big)= \V'(y_0) \big(\calA y_0 + \calF(y_0) + B u_0 \big).
		\end{equation}
		\nin Now we can pass to the limit in \eqref{v-lim} and obtain \eqref{hjb.1}.\\
				
		\nin \underline{Step 2:} For $u \in \mathcal{U}_{ad}$  we define $\tilde{u}\in U_{ad}$ by,
		\begin{equation*}
		\tilde{u}(\tau,x) =
		\begin{cases}
		u \quad \text{for } \tau \in (0,1),\\
		0 \quad \text{ for } \tau \in [1, \infty),
		\end{cases}
		\end{equation*}
		\nin and define $\tilde{y} = S(y_0,\tilde{u})$ as the solution to \eqref{1.1a}, \eqref{1.1b}. Then $\tilde y(t) \in B_Y(\bar y_0,\delta_3)$, for all $t$ sufficiently small, and  by \eqref{d-prog-p} we have,
		\begin{equation*}
		\frac{1}{\tau} \int_{0}^{\tau} \ell(\tilde{y}(s), u(s))ds + \frac{1}{\tau} \big( \V(\tilde{y}(\tau)) - \V(y_0) \big) \geq 0,
		\end{equation*}
		for all $\tau$ sufficiently small.
		We pass to the limit $\tau \to 0^+$ with the same arguments as in Step 1 and obtain  \begin{equation}\label{hjb.2}
		\V'(y_0) \big(\calA y_0 + \calF(y_0) + B u \big) + \ell(y_0, u) \ge 0.
		\end{equation}
		\nin This inequality becomes an equality if $u = u_0$, and thus the quadratic function on the left had side of \eqref{hjb.2} reaches its minimum $0$ at $u = u_0$. This implies that
		$u_0 =  \mathbb{P}_{\mathcal{U}_{ad}} \left(\frac{1}{\alpha} B^* \V'(y_0) \right).$
		Inserting this expression into  \eqref{hjb.1} we obtain
		\begin{equation}
		\V'(y_0)(\calA y_0 + \calF(y_0)) + \frac{1}{2} \norm{y_0}^2_Y + \frac{\alpha}{2}\norm{ \mathbb{P}_{\mathcal{U}_{ad}}\left(\frac{1}{\alpha} B^* \V'(y_0) \right)}^2_Y + \left\langle B^* \V'(y_0),\mathbb{P}_{\mathcal{U}_{ad}}\left(\frac{1}{\alpha} B^* \V'(y_0) \right) \right\rangle_Y = 0.
		\end{equation}
Under the additional assumptions on the trajectory, \eqref{eq:5.4} follows.
		
	\end{proof}

	\section{Some Applications}\label{sec6}
	\nin  In this section we discuss the applicability of the framework in two specific cases. It should be noted that even for linear state equations, the sensitivity result for the constraint infinite horizon optimal control  problem may be new.
	
	
	
	\subsection{Fisher's Equation}
	\nin We consider the optimal stabilization problem  for the Fisher equation in an open connected bounded domain $\Omega$ in $\BR^d, \ d \in \{1,2,3,4\}$, with Lipschitzian boundary $\Gamma = \partial \Omega$:\\
	\begin{subequations}
		\nin
		\begin{align}
		(\calP_{Fis}) \qquad \V(y_0) = \min_{\begin{matrix}
			(y,u) \in W_\infty \times U_{ad} \end{matrix}} \ \frac{1}{2} \int_{0}^{\infty} \norm{y}^2_Y dt + \frac{\al}{2} \int_{0}^{\infty} \norm{u}^2_{\calU} dt,
		\end{align}	
		\nin subject to
		\begin{empheq}[left=\empheqlbrace]{align}
		y_t &= \Delta y + y(1-y) + Bu &\text{ in } Q = (0, \infty) \times \Omega,\\
		y &= 0 &\text{ on } \Sigma = (0, \infty) \times \Gamma,\\
		y(0) &= y_0 &\text{ in } \Omega,
		\end{empheq}
	\end{subequations}
\nin where $\calU$ and $U_{ad}$ are as in Section \ref{sec3.1}, $B \in \calL({\calU, Y})$, with $Y=L^2(\Omega)$ and $V = H^1_0(\Omega)$. To further cast this problem in the framework of Section \ref{sec3}, we define the operator
\begin{equation*}
	\calA y = (\Delta + \mb{I}) y, \quad \text{and} \quad y|_{\Gamma} = 0, \quad \calD(\calA) = H^2(\Omega) \cap V.
	\end{equation*}
Clearly  $\calA$ has an extension as operator $\calA \in \calL(V,V^*)$. Moreover it generates an analytic semigroup on $Y$. Thus (\hyperref[assump]{A1}) holds. For $\calU = Y$ and $B = \mb{I}$, condition (\hyperref[assump]{A2}) is trivially satisfied. Feedback stabilization by finite dimensional controllers was analyzed in \cite{RT:1975}, for example.\\

\nin It can readily be checked that  the nonlinearity  $\calF(y) = -y^2$  is twice continuously differentiable as mapping $\calF: W_{\infty} \to L^2(I;V^*)$. The first and second derivatives of $\calF$ are given by,
\begin{align*}
	\calF'(y)v_1 = 2yv_1, \quad
	\calF''(y)(v_1,v_2) = 2(v_1,v_2), \quad \text{ for }  y,v_1,v_2 \in W_{\infty}.
\end{align*}
\nin Since the second derivative is independent of $y$,  its boundedness  is automatic. For the sake of illustration we verify  the boundedness of the bilinear form of the second derivative on $W_\infty \times W_\infty$.
For this purpose, for arbitrary $y\in W_\infty, v_1, v_2\in W_\infty, \phi\in L^2(I;V)$ we estimate
 \begin{equation}\label{eq:6.2}
\begin{aligned}
\int_{0}^{\infty} \ip{\calF''({y})(v_1, v_2)}{\phi}_{V^*,V} dt &\le  2 \int_{0}^{\infty} \int_{\Omega} v_1 v_2 \phi \ dx dt \le \int^\infty_0 \norm{v_1}_{L^2(\Omega)} \norm{v_2}_{L^4(\Omega)} \norm{\phi}_{L^4(\Omega)}\,dt,\\
&\le C_1 \norm{v_1}_{W_{\infty}} \int^\infty_0  \norm{v_2}_{V} \norm{\phi}_{V}\,dt
\le C_2 \norm{v_1}_{W_{\infty}}  \norm{v_2}_{L^2(I;V)} \norm{\phi}_{L^2(I;V)},\\
&\le C_3 \norm{v_1}_{W_{\infty}}   \norm{v_2}_{W_{\infty}} \norm{\phi}_{L^2(I;V)},
\end{aligned}
 \end{equation}

\nin where $C_i$ are embedding constants, independent of  $y\in W_\infty, v\in W_\infty, \phi\in L^2(I;V)$. We use that $V$ embeds continuously into $L^4(\Omega)$ in dimension up to 4.
 This implies that $\norm{\calF''({y})(v_1, v_2)}_{L^2(I;V^*)} \le  C_3 \norm{v_1}_{W_{\infty}} \norm{v_2}_{W_{\infty}}$.  Finally we have $\calF(0) = \calF'(0) = 0$  and thus (\hyperref[assump]{A3}) and \eqref{eq:fprime} are satisfied.\\

\nin Turning to (\hyperref[assump]{A4}) we show that $\calF(y):W(0,T) \rightarrow L^1(0,T;V^*)$ is continuous for every $T > 0$. We consider the sequence $ \ds y_n \rightharpoonup \hat{y}$ in $W_{\infty}$ and let $z \in L^{\infty}(0,T; V)$ be given.
Then we estimate
	\begin{align*}
\int_{0}^T \ip{\calF(y_n) - \calF(\hat{y})}{z}_{V^*,V} dt & =	\int_{0}^T \ip{y_n^2 - \hat{y}^2}{z}_{V^*,V} = \int_{0}^T \int_{\Omega}(y_n - \hat{y})(y_n + \hat{y})z \ dx dt,\\
	&\leq C_4 \int_{0}^T \norm{y_n - \hat{y}}_{Y}\norm{y_n + \hat{y}}_{L^4(\Omega)}\norm{z}_{L^4(\Omega)} \ dt,\\
	&\leq C_4 \norm{y_n - \hat{y}}_{L^2(0,T;Y)} \left[  \norm{y_n}_{L^2(0,T;V)} + \norm{\hat{y}}_{L^2(0,T;V)}\right] \norm{z}_{L^{\infty}(0,T;V)}.
	\end{align*}
	Since $V$ is compactly embedded in $Y$, we obtain by the Aubin Lions lemma that $\ds \norm{y_n - \hat{y}}_{L^2(0,T;Y)} \to 0$ for  $n \to \infty$. This implies
	\begin{equation*}
	\int_{0}^T \ip{\calF(y_n) - \calF(\hat{y})}{z}_{V^*,V} dt \xrightarrow[n \rightarrow \infty]{} 0,
	\end{equation*}
and (\hyperref[assump]{A4}) follows.
It is simple to check that $\ds \calF'(\bar y) = 2\bar y \in \calL(L^2(I;V),L^2(I;V^*))$ and thus (\hyperref[assump]{A5}) holds as well.\\

\nin We turn to the assumption  $\calF (\bar y)\in C([0,T_{y_0});Y)$, for $y_0 \in \calD(\calA)$ and some $T_{y_0}$,  arising in Theorem \ref{thm-HJB} for $y_0 \in \calD(\calA)$. Utilizing the fact that $V$ embeds continuously into $L^4(\Omega)$ in dimension $d\le 4$ and $\bar y \in L^2(I;V)$, we have $\calF(\bar y) \in L^2(I;Y)$. Hence parabolic regularity theory implies that $\bar y \in C([0,\infty);V)$ for $y_0 \in V$, and $\calF (\bar y)\in C([0,\infty);Y)$ follows.

\begin{rmk}
The specificity of this example rests in the fact that the second derivative is independent of the point were it is taken. Other nontrivial cases of analogous structure are reaction diffusion systems with bilinear coupling, see \cite{Gri:2004} where the finite horizon case was treated. Even the case of the Navier Stokes equations falls in this category. Sensitivity for the infinite horizon problems was treated by independent techniques  in \cite {BKP:2019}.
\end{rmk}

\subsection{Nonlinearities induced by functions with globally Lipschitz continuous second derivative.}
	
	Consider the system (\hyperref[Pq]{$\calP$}) with $\calA$ associated to a strongly elliptic second order operator with domain $H^2(\Omega) \cap H^1_0(\Omega)$, so that (\hyperref[assump]{A1})-(\hyperref[assump]{A2}) are satisfied. Let $\calF: W_{\infty} \to L(I;V^*)$ be the Nemytskii operator associated to a mapping $\mathfrak f: \BR \to \BR$ which is assumed to be $C^2(\BR)$ with first and second derivatives globally Lipschitz continuous, and second derivative globally bounded. The regularity assumption   $\calF (\bar y)\in C([0,T_{y_0});Y)$ for $y_0 \in V=H^1_0(\Omega)$  is satisfied by parabolic regularity theory.  We discuss assumption (\hyperref[assump]{A3})-(\hyperref[assump]{A5}) for such an  $\calF$, and show that they are satisfied for dimensions $d\in \{1,2\}$. For the finite horizon problem it will turn out that $d=3$ is also admissible. By direct calculation it can be checked that  $\calF$ is continuously Fr\'{e}chet differentiable for $d \in \{ 1,2,3 \}$. We leave this part to the reader and immediately turn to the second derivative.\\

\nin We proceed by considering the general dimension $d$ to highlight, how the restrictions on the dimension arise. Thus let $d \in \mathbb{N} $ with $d>1$. The case $d=1$ can be treated with minor modifications from those in the following steps.

\subsubsection{Second derivative of $\calF(y)$.}\label{sdF}
For $y, h_1, h_2 \in W_\infty $  the relevant  expression is given by
	\begin{multline*}
	\norm{\calF'(y + h_2)h_1 - \calF'(y)h_1 - \calF''(y)(h_1,h_2)}_{L^2(I;V^*)}\\
	\quad = \sup_{\norm{\varphi}_{L^2(I;V)} \leq 1}\ip{\calF'(y + h_2)h_1 - \calF'(y)h_1 - \calF''(y)(h_1,h_2)}{\varphi}_{L^2(I;V^*),L^2(I;V)},\\
	\quad= \sup_{\norm{\varphi}_{L^2(I;V)} \leq 1} \int_{0}^{\infty} \int_{\Omega} (\mathfrak{f}'(y(t,x) + h_2(t,x)) - \mathfrak{f}'(y(t,x)) - \mathfrak{f}''(y(t,x))h_2(t,x)) h_1(t,x)\varphi(t,x) dxdt,\\
	\quad = \sup_{\norm{\varphi}_{L^2(I;V)} \leq 1} \int_{0}^{\infty} \int_{\Omega} g(t,x) \ h_2(t,x)h_1(t,x)\varphi(t,x) dxdt,
\end{multline*}
\nin where  $\ds g(t,x) = \int_0^1 (\mathfrak{f}''(y(t,x) + sh_2(t,x)) - \mathfrak{f}''(y(t,x)))ds$.
\nin Note that $g$ is bounded on $I \times \Omega$ and $g \in W_{\infty}$. Here we use that $\mathfrak{f}''$ is globally Lipschitz continuous and that $h_1 \in W_\infty$.
Henceforth we let $r\in (1, \frac{2d}{d-2}]$ so that $W^{1,2}(\Omega)\subset L^r(\Omega)$ continuously.
Let $r'$ denote the conjugate of $r$ so that $ r'\in [\frac{2d}{d+2} ,\infty)$ for $d>2$ and $r'\in (1,\infty)$ for $d=2$.
We further choose $\rho > 1, \sigma > 2$ such that
$\frac{1}{\rho} + \frac{2}{\sigma} = 1$. Then we estimate
\begin{align*}
\abs{\int_{0}^{\infty} \int_{\Omega} gh_1h_2\varphi \ dxdt} &\leq
\int^\infty_0 \left( \int_\Omega \abs{g h_1 h_2}^{r'} dx  \right)^{\rfrac{1}{r'}} \norm{\varphi(t)}_{L^r(\Omega)} \, dt,\\
&\le \left(\int^\infty_0 \left( \int_\Omega \abs{g h_1 h_2}^{r'} dx \right)^{\rfrac{2}{r'}}dt \right)^{\rfrac{1}{2}} \left(\int^{\infty}_0\norm{\varphi}^2_{L^r(\Omega)}dt \right)^{\rfrac{1}{2}}.
\end{align*}
This further implies that
\begin{equation}\label{est_ghhp}
		\abs{\int_{0}^{\infty} \int_{\Omega} gh_1h_2\varphi \ dxdt} \leq C_0 \left[\int_{0}^{\infty} \norm{g(t)}^2_{L^{r'\rho}(\Omega)} \norm{h_1(t)}^2_{L^{r'\sigma}(\Omega)} \norm{h_2(t)}^2_{L^{r'\sigma}(\Omega)}  dt\right]^{\rfrac{1}{2}} \norm{\varphi}_{L^2(I;V)}. \longleftarrow (a)
	\end{equation}
Here and below  $C_i, i=0,1,2, \dots$  denote constant which are independent of $y,\varphi, h_1, h_2$. We next recall Gagliardo's inequality \cite[p 173]{BF:2013}:
	\begin{equation*}
	\norm{u}_{L^q(\Omega)} \leq \norm{u}^{1 - \rfrac{d}{q}-\rfrac{d}{2}}_{L^2(\Omega)}\norm{u}^{\rfrac{d}{2}+\rfrac{2}{q}}_{W^{1,2}(\Omega)}, \text{ for all } q > 2, \text{ and } u \in W^{1,2}(\Omega) \equiv V,
	\end{equation*}
where $q\in[2,2^*]$ and

\begin{equation*}
q^* \begin{cases}
       \in [2,\infty] \text{ for } d = 1, \\
       \in [2,\infty) \text{ for } d = 2, \\
       \in [2, \frac{2d}{d-2}],  \text{ for } d > 2.
    \end{cases}
\end{equation*}

	\nin In the above estimate we take, $q = r'\sigma$. We obtain
	\begin{equation*}
		1 + \frac{d}{q} - \frac{d}{2} = \frac{(2-d)r'\sigma + 2d}{2r'\sigma}, \ \frac{d}{2} - \frac{d}{q} = \frac{d(r'\sigma - 2)}{2r'\sigma}, \text{ also } r'\sigma > 2, \ (2-d)r'\sigma + 2d > 0.
	\end{equation*}
	\nin We estimate \eqref{est_ghhp}, (and check the conditions on the ranges of the parameters below)
	\begin{align}
		&\sup_{\norm{\varphi}_{L^2(I;V)} \leq 1} (a) \\
 &= C_1 \left( \int_{0}^{\infty} \norm{g(t)}^2_{L^{r'\rho}(\Omega)} \left( \norm{h_1(t)}_{L^2(\Omega)} \norm{h_2(t)}_{L^2(\Omega)} \right)^{\frac{(2-d)r'\sigma + 2d}{r'\sigma}} \left( \norm{h_1(t)}_{V} \norm{h_2(t)}_{V} \right)^{\frac{d(r'\sigma - 2)}{r'\sigma}} \right)^{\rfrac{1}{2}}dt, \nonumber\\
		&\leq C_2 \left( \norm{h_1}_{W_{\infty}} \norm{h_2}_{W_{\infty}} \right)^{\frac{(2-d)r'\sigma + 2d}{2r'\sigma}} \left( \int_{0}^{\infty} \norm{g(t)}^2_{L^{r'\rho}(\Omega)}  \left( \norm{h_1(t)}_{V} \norm{h_2(t)}_{V} \right)^{\frac{d(r'\sigma - 2)}{r'\sigma}} \right)^{\rfrac{1}{2}}dt. \longleftarrow (b)\nonumber
	\end{align}
 	\nin We set $\frac{d(r'\sigma - 2)}{r'\sigma} = \frac{2}{3}$. This yields,
	\begin{align}
	(b) & \le C_3 \left( \norm{h_1}_{W_{\infty}} \norm{h_2}_{W_{\infty}} \right)^{\rfrac{2}{3}} \left( \norm{h_1}_{W_{\infty}} \norm{h_2}_{W_{\infty}} \right)^{\rfrac{1}{3}} \left(\int_{0}^{\infty} \norm{g(t)}^6_{L^{r'\rho}(\Omega)}\right)^{\rfrac{1}{6}} dt, \nonumber\\
		&= C_4 \norm{h_1}_{W_{\infty}} \norm{h_2}_{W_{\infty}} \left( \int_{0}^{\infty} \left(\int_{\Omega} \abs{g(t)}^{r'\rho} dx \right)^{\rfrac{6}{r'\rho}} dt \right)^{\rfrac{1}{6}} \label{eq_b}.
	\end{align}
	\nin Now we check the conditions  on the parameter $r, \sigma, d$ , and $r', \sigma$. Since $\frac{d(r'\sigma - 2)}{r'\sigma} = \frac{2}{3}$, together with the conditions on $r'$ and $\sigma$ these parameters need to satisfy
\begin{equation}\label{eq:6.4}
r'\sigma = \frac{6d}{3d-2}, \quad  r'\in \left[\frac{2d}{d+2}, \infty \right), \quad \sigma \in (2,\infty)\quad r'\sigma\in[2,2^*] ,
\end{equation}
and $r'>1$ if $d=2$. The last condition above holds without restricting the dimension $d$. From the first three  relations we infer that necessarily $\frac{6d}{3d-2} =r'\sigma >\frac{4d}{d+2}$ which is only possible for $d\le3$.\\

\nin {\em Let us focus on $d=2$.} Then the choice of parameters $r=6,r'=\rfrac{6}{5}, \sigma=\rfrac{5}{2}, \rho=5$ satisfies all the above requirements and it is convenient to further estimate \eqref{eq_b}.
In fact we obtain
	\begin{multline*}
	\norm{\calF'(y + h_2)h_1 - \calF'(y)h_1 - \calF''(y)(h_1,h_2)}_{L^2(I;V^*)} \\
	\le C_5 \norm{h_1}_{W_{\infty}} \norm{h_2}_{W_{\infty}} \left(\int_{0}^{\infty} \left(\int_{\Omega} \abs{g(t,x)}^{6}dx \right) dt \right)^{\rfrac{1}{6}},\\
 \le C_6 \norm{h_1}_{W_{\infty}} \norm{h_2}_{W_{\infty}} \left(\int_{0}^{\infty} \left(\int_{\Omega} \abs{g(t,x)}^{2}dx \right) dt \right)^{\rfrac{1}{6}},
\end{multline*}
\nin for all $y, h_1, h_2 \in W_\infty $. Here we use the boundedness of $g$. By Lebesgue's bounded convergence theorem the last factor converges to $0$ for $\norm{h_2}_{W_\infty} \to 0$ and hence the fact that $\calF$ is twice differentiable is verified. The continuity of the second derivative follows with the above estimates and again by the Lebesgue theorem.\\

\nin {\em Next we consider $d=3$.} In this case an analogous procedure is not possible, since the relations \eqref{eq:6.4} and $r'\rho \le 6$ cannot be fulfilled simultaneously. In fact, $r'\sigma = \rfrac{18}{7}, r'\ge \rfrac{6}{5}$, and thus necessarily $\sigma\in (2, \rfrac{15}{7}]$. The condition $r'\rho \le 6$ is equivalent to $12 \le \sigma(6-r')=\rfrac{18}{7r'}(6-r')$,  which in turn is equivalent to $17r' \le 18$, which contradicts $r' \ge \rfrac{6}{5}$.\\

\nin Thus we fix parameters $r$ and $\sigma$ such that \eqref{eq:6.4} are satisfied for $d=3$, as for instance $r'=\rfrac{6}{5}, \sigma= \rfrac{15}{7}$, which implies that $\rho=15$ and $r'\rho=18$.
 Then for the {\em finite horizon problem} we can estimate by H\"older's inequality with $\eta= \rfrac{r'\rho}{6}$:
\begin{align*}
		(b) &\le C_7 \norm{h_1}_{W_{\infty}} \norm{h_2}_{W_{\infty}} \left( \int_{0}^{T} \left(\int_{\Omega} \abs{g(t,x)}^2 dx \right)^{\rfrac{6}{r'\rho}}dt \right)^{\rfrac{1}{6}} ,\nonumber\\
		&\leq C_8 \norm{h_1}_{W_{\infty}} \norm{h_2}_{W_{\infty}} \left( \int_{0}^{T} \left(\int_{\Omega} \abs{g(t,x)}^2 dx \right) dt\right)^{\rfrac{1}{6\eta}} T^{\rfrac{1}{6\eta'}}.
	\end{align*}
From here we can proceed as in the case $d=2$ to assert the continuous second Fr\'echet differentiability of $\calF$ in $d=3$ for the finite horizon case.

\subsubsection{Assumptions (\hyperref[assump]{A4}) and (\hyperref[assump]{A5}).}\label{s622}
	\nin In order to verify (\hyperref[assump]{A4}), we show that $\calF(y):W(0,T) \to L^1(0,T;V^*)$ is continuous for every $T > 0$. We consider the sequence $y_n \rightharpoonup \hat{y}$ in $W(0,T)$ and let $z \in L^{\infty}(0,T;V)$ be given. Then we estimate
	\begin{equation*}
		\int_{0}^T \ip{\calF(y_n) - \calF(\hat{y})}{z}_{V,V^*} dt = \int_{0}^T \int_{\Omega} (\mathfrak{f}(y_n) - \mathfrak{f}(\hat{y}))z \ dxdt \leq C \norm{y_n - \hat{y}}_{L^2(0,T;Y)}\norm{z}_{L^2(0,T;Y)}.
	\end{equation*}
	\nin Then by the compactness of $V$ in $Y$, we obtain (\hyperref[assump]{A4}).\\
	
	\nin Now we verify (\hyperref[assump]{A5}). We recall Remark \ref{rmk-fprime}, and proceed as in \eqref{eq:6.2}  for  $y \in W_{\infty}, \varphi \in L^2(I;V)$,
	\begin{eqnarray}
		\norm{\calF'(y)^*p}_{L^2(I;V^*)} = \norm{(\calF'(y)^* - \calF'(0)^*)p}_{L^2(I;V^*)} = \sup_{\norm{\varphi}_{L^2(I;V)} \leq 1} \int_{0}^{\infty}\int_{\Omega} \ip{(\calF'(y)^* - \calF'(0)^*)p}{\varphi}_{V^*,V}, \nonumber\\
		= \sup_{\norm{\varphi}_{L^2(I;V)} \leq 1} \int_{0}^{\infty}\int_{\Omega} (\mathfrak{f}'(y) - \mathfrak{f}'(0))p\varphi \ dxdt	\leq C\norm{y}_{W_{\infty}}\norm{p}_{L^2(I;V)}. \nonumber
	\end{eqnarray}
	\nin This shows $\calF'(y)^*$ satisfies (\hyperref[assump]{A5}).
		
	\subsection{Cubic nonlinearity $y^3$ in one dimension ($\Omega \subset \BR$).}
	\nin We can also consider the optimal stabilization problem with cubic nonlinearity, i.e. $\calF(y) = y^3$ in one dimension. This is a special monotone case of the Schl\"{o}gl model of theoretical chemistry.
	\begin{subequations}
		\nin
		\begin{align}
		(\calP_{Sch}) \qquad \V(y_0) = \min_{\begin{matrix}
			(y,u) \in W_\infty \times U_{ad} \end{matrix}} \ \frac{1}{2} \int_{0}^{\infty} \norm{y}^2_Y dt + \frac{\al}{2} \int_{0}^{\infty} \norm{u}^2_{\calU} dt,
		\end{align}	
		\nin subject to
		\begin{empheq}[left=\empheqlbrace]{align}
		y_t &= \Delta y + y^3 + Bu \quad &\text{ in } Q = (0, \infty) \times \Omega,\\
		y &= 0 \quad &\text{ on } \Sigma = (0, \infty) \times \Gamma,\\
		y(0) &= y_0 \quad &\text{ in } \Omega.
		\end{empheq}
	\end{subequations}
	\nin In this model, one can easily verify assumption (\hyperref[assump]{A1}) is satisfied by taking $\calA y= \Delta y, \ y|_{\Gamma} = 0,$ and $\calD(\calA) = H^2(\Omega) \cap V$. Clearly $\calA$ can be extended to $\calA \in \calL(V,V^*)$. Moreover $\calA$ generates an analytic semigroup on $Y$ which is uniformly stable. Assumption (\hyperref[assump]{A2}) is satisified under the same argumentation as in Fisher's equation. Differentiability assumption (\hyperref[assump]{A3}), and continuity assumption (\hyperref[assump]{A4}) are satisfied along similar computations as in subsections \ref{sdF}, \ref{s622}. For   (\hyperref[assump]{A5}) we require that $y_0\in V$.  Indeed in this case for $\bar y \in W_\infty$ by Gagliardo's inequality
\begin{equation*}
\int_0^\infty \int_\Omega |\bar y^3|^2 dx dt = \int_0^\infty \norm{\bar y}^6_{L^6(\Omega)} dt \le  \int^\infty_0\norm{\bar y}^4_{L^2(\Omega)} \norm{\bar y}^2_{V} dt \le C \norm{\bar y}^4_{W_\infty}\int^\infty_0   \norm{\bar y}^2_{V} dt \le C \norm{\bar y}^6_{W_\infty}.
\end{equation*}
Thus $\bar y^3 \in L^2(I;Y)$ and parabolic regularity theory implies that $\bar y\in C(I;V)$ if $y_0\in V$.
We estimate for $h,\varphi \in L^2(I;V)$, suppressing the arguments $(t,x)$,
\begin{align*}
\abs{\int_0^\infty \int_{\Omega} \calF'(\bar y) h \varphi \, dx dt} \le \abs{\int_0^\infty \int_{\Omega} \bar y^2 h \varphi \, dx dt} &\le \int_0^\infty \norm {\bar y}^2_{L^4(\Omega)} \norm {h}_{L^4(\Omega)} \norm {\varphi}_{L^4(\Omega)} \, dt, \\
&\le C \norm{\bar y}^2_{C(I;V)} \norm{ h}_{L^2(I;V)} \norm{\varphi}_{L^2(I;V)}
\end{align*}
which implies (\hyperref[assump]{A5}). Moreover we have $\calF (\bar y)\in C([0,T_{y_0});Y)$, since $V \subset C(\bar \Omega)$ in dimension 1, and thus the extra regularity demanded in Theorem \ref{thm-HJB} is satisfied.

%
%

	\medskip
	Received xxxx 20xx; revised xxxx 20xx.
	\medskip
	
\end{document}